\newtheorem{lemma}{Lemma}
\newtheorem{theorem}{Theorem}
\title{PhD}
\author{}
\date{January 2021}
\begin{document}

\begin{center}
    \textbf{More Limiting Distributions for Eigenvalues of Wigner Matrices}
\end{center}

\begin{center}
     Simona Diaconu\footnote{Department of Mathematics, Stanford University, sdiaconu@stanford.edu}
\end{center}

\begin{abstract}
   The Tracy-Widom distributions are among the most famous laws in probability theory, partly due to their connection with Wigner matrices. In particular, for \(A=\frac{1}{\sqrt{n}}(a_{ij})_{1 \leq i,j \leq n} \in \mathbb{R}^{n \times n}\) symmetric with \((a_{ij})_{1 \leq i \leq j \leq n}\) i.i.d. standard normal, the fluctuations of its largest eigenvalue \(\lambda_1(A)\) are asymptotically described by a real-valued Tracy-Widom distribution \(TW_1:\) \(n^{2/3}(\lambda_1(A)-2) \Rightarrow TW_1.\) As it often happens, Gaussianity can be relaxed, and this results holds when \(\mathbb{E}[a_{11}]=0, \mathbb{E}[a^2_{11}]=1,\) and the tail of \(a_{11}\) decays sufficiently fast: \(\lim_{x \to \infty}{x^4\mathbb{P}(|a_{11}|>x)}=0,\) whereas when the law of \(a_{11}\) is regularly varying with index \(\alpha \in (0,4),\) \(c_a(n)n^{1/2-2/\alpha}\lambda_1(A)\) converges to a Fréchet distribution for \(c_a:(0,\infty) \to (0,\infty)\) slowly varying and depending solely on the law of \(a_{11}.\) This paper considers a family of edge cases, \(\lim_{x \to \infty}{x^4\mathbb{P}(|a_{11}|>x)}=c \in (0,\infty),\) and unveils a new type of limiting behavior for \(\lambda_1(A):\) a continuous function of a Fréchet distribution in which \(2,\) the almost sure limit of \(\lambda_1(A)\) in the light-tailed case, plays a pivotal role:     
   \[f(x)=\begin{cases}
    2, & 0<x<1 \\
    x+\frac{1}{x}, & x \geq 1 
    \end{cases}.\]  
\end{abstract}

\section{Introduction}\label{intro}

Wigner matrices have been an object of intensive study in mathematics ever since Eugene Wigner proposed them in \(1955\) as a tool for understanding the organization of heavy nuclei and showed their empirical spectral distribution converges to the semicircle law ([\ref{wigner}]). Such matrices are generally square with entries in \(\mathbb{R}, \mathbb{C},\) or \(\mathbb{H},\) and satisfy certain symmetry conditions: the focus hereafter is the real-valued symmetric case. Let \(A=\frac{1}{\sqrt{n}}(a_{ij})_{1 \leq i,j \leq n} \in \mathbb{R}^{n \times n}\) be a symmetric matrix with i.i.d. entries on its upper triangular component for which \(\mathbb{E}[a_{11}]=0, \mathbb{E}[a^2_{11}]=1,\) and denote by \(\lambda_1(A) \geq \lambda_2(A) \geq ... \geq \lambda_n(A)\) its eigenvalues. When \(a_{11}\) is Gaussian, it is well-known that the edge (i.e., a fixed number of the largest or smallest) eigenvalues of \(A\) exhibit fluctuations described by Tracy-Widom distributions (see, for instance, the seminal paper [\ref{tracywidom}]) and several universality results, meant to deal with the case in which \(a_{11}\) is not normally distributed, have been discovered (e.g., Tao and Vu~[\ref{taovu}]). 
\par
Consequently, a natural question is what can substitute the Gaussianity assumption in such results. It must be mentioned that this condition cannot be completely dispensed with: as a finite second moment of the entries is necessary for the convergence of the empirical spectral distribution of \(A\) to the semicircle law, the fourth moment is crucial for the asymptotic behavior of \(\lambda_1(A)\) (Bai and Yin~[\ref{baiyin}] showed a finite fourth moment is required if the largest eigenvalue has an almost sure deterministic limit; sample covariance matrices with the number of samples proportional to their dimension represent another instantiation of this phenomenon: when the fourth moment is finite, the largest eigenvalue tends almost surely to a constant, whereas when the former is infinite, the latter tends to infinity with probability one: see Bai and Yin~[\ref{baiyin2}], Bai et al.~[\ref{baietal}]). Furthermore, if \(a_{11}\) is heavy-tailed (i.e., its law is regularly varying of index \(\alpha \in (0,4):\) at a high level, this says \(\mathbb{P}(|a_{11}|>x)\) decays like \(x^{-\alpha},\) and in particular, its \(\alpha\)-moment is infinite), then a new behavior emerges: the edge eigenvalues, properly normalized, fluctuate according to a Poisson point process (Soshnikov~[\ref{soshnikov}] studied \(\alpha \in (0,2),\) and Auffinger et al.~[\ref{auffinger}] extended this result to \(\alpha \in (0,4)\)). 
\par
The question of finding optimal conditions under which the edge eigenvalues can be described by a Tracy-Widom distribution received a fair amount of attention and was completely answered in a paper of Lee and Yin~[\ref{leeyin}]: this occurs if and only if \(\lim_{x \to \infty}{x^4\mathbb{P}(|a_{11}|\geq x)}=0.\) It must be noticed there had been several publications prior to this result, proving an \(\alpha\)-finite moment of the underlying distribution suffices (for symmetric distributions, Ruzmaikina~[\ref{ruzmaikina}] obtained \(\alpha>18,\) and later this was improved to \(\alpha>12\) by Khorunzhiy~[\ref{khorunzhiy}]). This paper is concerned with a family of edge cases, distributions for which \(\lim_{x \to \infty}{x^4\mathbb{P}(|a_{11}|\geq x)}=c \in (0,\infty)\) and the main result is: 

\begin{theorem}\label{theorem1}
    Suppose \(A=\frac{1}{\sqrt{n}}(a_{ij})_{1 \leq i,j \leq n} \in \mathbb{R}^{n \times n}\) is a symmetric matrix for which \((a_{ij})_{1 \leq i \leq j \leq n}\) are i.i.d. and the distribution of \(a_{11}\) is symmetric with \(\mathbb{E}[a^2_{11}]=1, \lim_{x \to \infty}{x^4\mathbb{P}(|a_{11}|\geq x)}=c \in (0,\infty).\) 
    Then as \(n \to \infty,\)
    \begin{equation}\label{conv}
        \lambda_1(A) \Rightarrow f(\zeta_c),
    \end{equation}
    where 
    \[f(x)=\begin{cases}
    2, & 0<x<1 \\
    x+\frac{1}{x}, & x \geq 1 
    \end{cases},\]
    and \(\zeta_c>0\) has a Fréchet distribution with shape and scale parameters \(4,(\frac{c}{2})^{1/4},\) respectively: for all \(x>0,\)
    \(\mathbb{P}(\zeta_c \leq x)=\exp(-\frac{cx^{-4}}{2}).\) 
\end{theorem}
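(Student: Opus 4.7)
The plan is to exploit the fact that the hypothesis $\lim_{x\to\infty}x^4\mathbb{P}(|a_{11}|>x)=c\in(0,\infty)$ places the law of $a_{11}$ at the precise boundary between the Tracy--Widom regime (where the limit is $0$) and the heavy-tailed regime (index below $4$). At this edge, $A$ decomposes naturally into a Wigner-like bulk whose largest eigenvalue converges to $2$ and a Poisson-distributed handful of exceptionally large entries acting as low-rank spike perturbations subject to a BBP phase transition. The limiting random variable is identified first by an extreme-value computation: with $M_n := \max_{1\leq i\leq j\leq n}|a_{ij}|/\sqrt{n}$, independence and the tail hypothesis yield
\[
\mathbb{P}(M_n\leq x)=\bigl(1-\mathbb{P}(|a_{11}|>\sqrt{n}\,x)\bigr)^{n(n+1)/2}\xrightarrow[n\to\infty]{}\exp\!\bigl(-cx^{-4}/2\bigr),
\]
so $M_n\Rightarrow\zeta_c$; more generally the scaled off-diagonal entries converge as a point process on $(0,\infty)$ to a Poisson point process of intensity $2c\,y^{-5}\,dy$.

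Next, for a parameter $\tau\in(0,1)$ eventually sent to $0$, split $A=A^b_\tau+A^s_\tau$ at threshold $\tau\sqrt{n}$: the \emph{bulk} $A^b_\tau$ has entries $a_{ij}\mathbf{1}_{|a_{ij}|\leq\tau\sqrt{n}}/\sqrt{n}$ and the \emph{spike} $A^s_\tau$ collects the rest. For the bulk, the entry variance $\sigma_{n,\tau}^2:=\mathbb{E}[a_{11}^2\mathbf{1}_{|a_{11}|\leq\tau\sqrt{n}}]\to 1$; truncating $A^b_\tau$ further at a fixed level $T$ yields a uniformly bounded Wigner matrix whose top eigenvalue converges to $2\sigma_T$ by Bai--Yin, while the residual has iid centered entries of variance $\leq\mathbb{E}[a_{11}^2\mathbf{1}_{|a_{11}|>T}]/n\to 0$ as $T\to\infty$ uniformly in $n$, with fourth moment controlled via $|x|^4\leq\tau^2 x^2$ for $|x|\leq\tau$, so a moment/trace bound gives residual operator norm $o_T(1)+o_{\mathbb{P},n}(1)$. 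Sending $T\to\infty$ after $n\to\infty$ gives $\lambda_1(A^b_\tau)\xrightarrow{\mathbb{P}}2$. For the spike, with probability $1-o(1)$ the non-zero entries of $A^s_\tau$ sit at distinct off-diagonal positions $(i_k,j_k)$ with scaled values $\xi_k=a_{i_kj_k}/\sqrt{n}$, $|\xi_k|>\tau$; their count $N_\tau$ is Poisson in the limit with mean $c/(2\tau^4)$, and $A^s_\tau$ is symmetric of rank $2N_\tau$ with spectrum $\{\pm\xi_k\}_k$, the positions being uniform on the upper-triangular index set and independent of the $\xi_k$'s.

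A BBP transition for finite-rank symmetric perturbations of a Wigner-type matrix with bulk edge at $2$ then produces, for each spike with $|\xi_k|>1$, an outlier of $A$ at $|\xi_k|+1/|\xi_k|$, while subcritical spikes ($|\xi_k|<1$) are absorbed by the bulk. Writing $\xi^*_\tau:=\max_k|\xi_k|$ and noting that $\xi^*_\tau=M_n$ on the asymptotically probability-one event $\{M_n>\tau\}$, one obtains
\[
\lambda_1(A)=\max\!\bigl\{2,\ \xi^*_\tau+1/\xi^*_\tau\bigr\}+o_{\mathbb{P}}(1),
\]
whence $\lambda_1(A)\Rightarrow\max\{2,\zeta_c+1/\zeta_c\}=f(\zeta_c)$ upon letting $\tau\to 0$, using that $f(1^-)=f(1)=2$ makes $f$ continuous on $(0,\infty)$ so the continuous mapping theorem applies.

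The main technical obstacle is the BBP step: the bulk $A^b_\tau$ and the spike $A^s_\tau$ share randomness (both come from the same matrix), and the bulk entries possess only a logarithmically growing fourth moment, so the standard isotropic local law used to establish BBP outlier behavior -- which is available under the stronger Lee--Yin condition $\lim x^4\mathbb{P}(|a|>x)=0$ of [\ref{leeyin}] -- is not directly applicable to $A^b_\tau$. Two routes seem feasible: (i) prove an isotropic/anisotropic resolvent estimate for $A^b_\tau$ stable uniformly in $\tau$ and perform the usual Schur-complement reduction onto the finite-dimensional subspace spanned by the spike eigenvectors $(e_{i_k}\pm e_{j_k})/\sqrt{2}$; or (ii) derive matching bounds directly -- the lower bound $|\xi_k|+1/|\xi_k|$ by a min--max argument with explicit test vectors localized on the spike coordinates, and the upper bound by eigenvalue interlacing after successive removal of the spike rows and columns, both needing only the bulk edge estimate $\lambda_1(A^b_\tau)\to 2$ established above.
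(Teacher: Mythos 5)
Your outline correctly identifies the mechanism behind the theorem --- a competition between the semicircle edge at $2$ and the largest entries acting as rank-two spikes, resolved by the BBP function $x+1/x$ --- and your extreme-value computation for $M_n$ coincides with the paper's subsection~\ref{seclowerbound}. However, the argument has a genuine gap at its center: the BBP step is not proved, only flagged, and neither of the two routes you sketch closes it. Route (ii) cannot work as stated. A min--max bound with test vectors localized on the spike coordinates $(e_{i_k}\pm e_{j_k})/\sqrt{2}$ yields only $\lambda_1(A)\geq \xi^*+o(1)$ (this is exactly the paper's easy bound (\ref{secondeasy})); to produce the correction $1/\xi^*$ the test vector must be spread over all coordinates with weights given by the resolvent of the bulk, which requires a quantitative estimate of $\langle u,(A^b_\tau-z)^{-1}u\rangle$ for $z$ outside the spectrum --- precisely the local-law-type input you are trying to avoid, and more than the statement $\lambda_1(A^b_\tau)\to 2$ provides. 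Likewise, Cauchy interlacing after deleting the spike rows and columns controls $\lambda_{k+1}(A)$ by eigenvalues of a submatrix but gives no upper bound on $\lambda_1(A)$ sharper than Weyl's $2+\xi^*+o(1)$. Route (i) is the honest route, but establishing an isotropic resolvent estimate for $A^b_\tau$ is nontrivial exactly because the truncated entries have a fourth moment growing like $\log n$, i.e., the hypothesis of the theorem is the regime where the standard inputs fail; asserting feasibility is not a proof. A secondary but real issue is your claim $\lambda_1(A^b_\tau)\xrightarrow{\mathbb{P}}2$: after truncating at a fixed level $T$, the residual matrix still contains entries of all intermediate magnitudes up to $\tau$ (after normalization), and a small entrywise variance does not bound its operator norm --- a single surviving entry of size close to $\tau$ already forces norm at least $\tau$, and the fourth moment of the residual is too large for the standard Bai--Yin/F\"uredi--Koml\'os bound. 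Controlling these intermediate scales is why the paper uses a three-tier split at $n^{1/4-\delta_1}$, $n^{3/8+\delta_2}$, and $\kappa\sqrt{n}$ together with Theorem~\ref{thbenaychpeche}.

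For contrast, the paper never invokes a BBP theorem or a local law: it conditions on the positions and values of the entries exceeding $\kappa\sqrt{n}$, studies $tr(A_\kappa^{2p})-tr(A_s^{2p})$ by a modified Sinai--Soshnikov cycle count (with Lemma~\ref{linalglemma} converting this trace difference into $\|A_\kappa\|^{2p}$ up to $O(m)$ error terms), and the function $f$ emerges from the asymptotics $s(p,M)^{1/2p}\to f(M)$ of an explicit combinatorial polynomial, i.e., from an optimization over Dyck-path statistics rather than from a resolvent identity. Your decomposition-plus-BBP strategy is a genuinely different and conceptually cleaner route, but as written it outsources the entire analytic difficulty to an unproven spiked-perturbation result in a moment regime where the existing literature does not supply it.
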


\par
Several observations are in order regarding the convergence stated in (\ref{conv}). First, the limiting distribution arises at the collision of heavy- and light-tailed regimes. More precisely, it inherits the Fréchet fluctuations \(\zeta_c\) from the Poisson point process characterizing the extrema of heavy-tailed i.i.d. random variables, whereas \(2\) is a vestige from light-tailed distributions since the convergence of the empirical spectral distribution of \(A\) to the semicircle law holds as long as \(a_{11}\) has its second moment finite (subsection~\ref{seclowerbound} expounds on this phenomenon). 
\par
Second, the function \(f\) is tightly related to a sequence of polynomials \((s(d,X))_{d \in \mathbb{N}},\) whose coefficients are non-negative and have a combinatorial description: specifically, \(s(d,X)\) has degree \(2d-2,\) and for \(x>0,\)
\[f(x)=\lim_{d \to \infty}{s(d,x)^{1/2d}}\]
(conditionally on an event \(E(x),\) a trace will be roughly \(\lambda^{2d}_1(\frac{1}{\sqrt{n}}A)\) and of order \(s(d,x)\) by counting).
The strategy adopted here has been oftentimes employed for getting a hold of the largest eigenvalue of a symmetric random matrix \(M:\) controlling by careful counting
\[\mathbb{E}[tr(M^{p})]=\mathbb{E}[\sum_{1 \leq i \leq n}{\lambda_i^p(M)}]\]
for large integers \(p\) (e.g., Bai and Yin~[\ref{baiyin}], Benaych-Georges and Péché~[\ref{benaychpeche}], Auffinger et al.~[\ref{auffinger}]), an approach whose by-product is the above definition of \(f.\) Nevertheless, in the current situation, the classical choice \(M=(a_{ij}\chi_{|a_{ij}| \leq c(n)}),\) for a suitable \(c(n),\) falls short due to the heavy tail of \(a_{11}.\) To illustrate how this occurs, suppose the goal is bounding \(||A||,\) and let \(p\) be even so that \(||M||^p \leq tr(M^{p}).\) After truncating \(A\) (to ensure all moments are finite), two incompatible constraints on \(p\) emerge: it must be large to annihilate the contribution of the other eigenvalues (since the empirical spectral distribution of \(A\) converges to the semicircle law, such trace would be at least of order \(n \cdot 2^{p}:\) thus, to eliminate \(n,\) \(p\) should grow faster than \(\log{n}\)), but also small to deal with the non-negligible terms, which are numerous because the moments of \(a_{11}\) grow fast. 
\par
This failure suggests the necessity of twisting this method to adapt it to the present context: an ideal substitute of \(tr(M^{p})\) would be on the one hand, lighter than what is meant to replace, and on the other hand, amenable to combinatorics. In light of these observations, a promising candidate is 
\[tr((S+Q)^p)-tr(Q^p)\] 
where \(S,Q\) are symmetric with \((C1) \hspace{0.1cm} ||S+Q-A||\) small, and \((C2) \hspace{0.1cm} S\) very sparse (say, \(O(n)\) non-zero entries) inasmuch as \(C1\) would allow switching from \(A\) to \(S+Q,\) while \(C2\) would ensure a considerable overlap between the eigenvalues of \(S+Q\) and those of \(Q,\) the difference above hence generating plenty of cancellations (see Lemma~\ref{linalglemma} for a rigorous statement). The desired convergence concerning \(A\) is thus justified by constructing such a proxy \(S+Q,\) further analyzed with the aid of the counting technique developed by Sinai and Soshnikov in [\ref{sinaisosh}]. Some modifications are anew indispensable: although both situations share the family of cycles dominating the considered expectations, in the current setting, there exist several types of comparable contributions (these underlie the sequence of polynomials \(s(d,X)\) mentioned earlier), whereas in the framework of [\ref{sinaisosh}], each dominating cycle generates the same value. Furthermore, the expectation in this case is not unconditional (a conditioning is employed to freeze the largest entries of \(A\)).    
\par
Third, for \(k\) fixed, the joint distribution of the \(k\) largest eigenvalues of \(A\) can be determined reasoning as in Soshnikov~[\ref{soshnikov}]. Theorem \(1.2\) of [\ref{soshnikov}] states that for regularly varying distributions \(a_{11}\) with index \(\alpha \in (0,2),\) the limiting law of the positive eigenvalues of \(A\) (appropriately normalized) is given by an inhomogeneous Poisson point process \(N\) on \((0,\infty)\) with intensity \(\rho(x)=\frac{\alpha}{x^{\alpha+1}}:\) the ingredients behind this result are the behavior of \(\lambda_1(A),\) the Cauchy interlacing inequalities, and the theory on extrema of random variables in the domain of attraction of \(\alpha\)-laws (see, for instance, Theorem \(2.3.1\) in Leadbetter et al.~[\ref{leadbetteretal}]). 
In the present situation, the intensity is \(\rho(x)=\frac{c}{8x^{5}},\) and the convergence of the positive eigenvalues of \(A\) is not to the point process itself, but rather to \(f(N)\) (see end of subsection~\ref{largesteigenvalue}). Clearly, by symmetry, results similar to (\ref{conv}) hold for the smallest eigenvalues of \(A.\)
\par
The remainder of the paper contains the proof of Theorem~\ref{theorem1}: subsections \ref{seclowerbound} and \ref{secuppbound} present the rival forces behind the object of interest and the matrix decomposition leading to a proxy as described previously; section~\ref{section2} gathers the necessary tools for showing
\begin{equation}\label{desiredlim}
    \limsup_{n \to \infty}{\mathbb{P}(||A|| \geq x)} \leq \mathbb{P}(f(\zeta_c) \geq x) \leq \liminf_{n \to \infty}{\mathbb{P}(||A|| \geq x)}
\end{equation}
for all \(x \in \mathbb{R};\) section~\ref{section3} consists of proving (\ref{desiredlim}) and justifying why an analogous chain of inequalities holds when \(||A||\) is replaced by \(\lambda_1(A).\)

\subsection{A Lower Bound}\label{seclowerbound}

This subsection presents a preliminary inequality \(\lambda_1(A)\) satisfies: for \(t>0,\)
\begin{equation}\label{liminf}
    \lambda_1(A) \geq \max{(\max{A},2)}-t
\end{equation}
with probability tending to one as \(n\) tends to infinity, where \(\max{A}:=\frac{1}{\sqrt{n}}\max_{1 \leq i \leq j \leq n}{|a_{ij}|}.\) 
Although this result is not directly employed to prove Theorem~\ref{theorem1}, it displays the two essential quantities underlying both the operator norm of \(A\) and its largest eigenvalue. Henceforth, an event \(E=E_n\) is said to hold with high probability if \(\lim_{n \to \infty}{\mathbb{P}(E_n)}=1.\)
\par
On the one hand, since \(A\) is a normalized Wigner matrix whose entries have variance \(\frac{1}{n},\) its empirical spectral distribution converges almost surely to the semicircle law \(\rho\) (theorem \(2.5\) in Bai and Silverstein~[\ref{baisilv}]). Thus, almost surely
\[\liminf_{n \to \infty}{\lambda_{1}(A)} \geq 2\]
as \(\rho\) assigns a positive mass to any neighborhood of \(2,\) from which for \(t>0,\) with high probability
\begin{equation}\label{firsteasy}
   \lambda_1(A) \geq 2-t.
\end{equation}
\par
On the other hand,
\begin{equation}\label{maxdist}
   \max{A}=\frac{1}{\sqrt{n}}\max_{1 \leq i \leq j \leq n}{|a_{ij}|} \Rightarrow \zeta_c
\end{equation}
because for \(s>0\) as \(n \to \infty,\)
\[\mathbb{P}(\frac{1}{\sqrt{n}}\max_{1 \leq i \leq j \leq n}{|a_{ij}|}<s)=(1-\mathbb{P}(|a_{11}| \geq s\sqrt{n}))^{\frac{n^2+n}{2}}=\exp(-c(s\sqrt{n})^{-4}(1+o(1)) \cdot \frac{n^2+n}{2}) \to \exp(-\frac{c}{2s^4}).\] 
Let \(|a_{i_0j_0}|=\max_{1 \leq i \leq j \leq n}{|a_{ij}|}\)
with \(i_0 \leq j_0:\) (\ref{maxdist}) and
\begin{equation}\label{maxdiag}
    \frac{1}{\sqrt{n}}\max_{1 \leq i \leq n}{|a_{ii}|} \xrightarrow[]{p} 0   
\end{equation}
((\ref{maxdist}) implies \(\frac{1}{n^{1/4}}\max_{1 \leq i \leq n}{|a_{ii}|} \Rightarrow \zeta_{2c}\)) yield \(i_0<j_0\) with high probability. Consider a unit vector \(v \in \mathbb{R}^{n}\) with \(|v_{i_0}|=|v_{j_0}|=\frac{1}{\sqrt{2}}\) and \(v_{i_0}v_{j_0}a_{i_0j_0} \geq 0.\) Therefore,
\[v^TAv=\frac{1}{\sqrt{n}}(|a_{i_0j_0}|+\frac{a_{i_0i_0}}{2}+\frac{a_{j_0j_0}}{2}) \geq \frac{1}{\sqrt{n}}\max_{1 \leq i \leq j \leq n}{|a_{ij}|}-\frac{1}{\sqrt{n}}\max_{1 \leq i \leq n}{|a_{ii}|},\]
which in conjunction with (\ref{maxdiag}) gives for \(t>0,\)
\begin{equation}\label{secondeasy}
    \lambda_{1}(A) \geq \max{A}-t.
\end{equation}
with high probability. The desired bound (\ref{liminf}) ensues from (\ref{firsteasy}) and (\ref{secondeasy}).

\subsection{A Matrix Decomposition}\label{secuppbound}

In light of (\ref{maxdist}), showing for \(M, \epsilon>0,\)
\begin{equation}\label{goal}
    \lim_{n \to \infty}{\mathbb{P}(||A||>f(M)+2\epsilon \hspace{0.05cm} | \hspace{0.05cm} \max{A} \leq M)}=0,
\end{equation}
\begin{equation}\label{goal2}
    \lim_{n \to \infty}{\mathbb{P}(||A||<f(\max{A})-2\epsilon)}=0
\end{equation}
suffices to justify (\ref{desiredlim}).
\par
One core ingredient for both (\ref{goal}) and (\ref{goal2}) is a decomposition of \(A\) into three matrices \(A_s, A_m, A_b,\) with small, medium, and big entries, respectively. The last component, already sparse, is further split into two matrices, one of them being considerably sparser than \(A_b.\) Next, it is proved that with high probability \(||A_m||\) is negligible, while the first component of \(A_b\) contributes at most \(\epsilon.\) Thus, in an operator norm sense, the sum of \(A_s\) and the sparser component of \(A_b\) differs from \(A\) by at most \(\epsilon,\) making the former a proxy for the latter.
\par
Let \(\delta_1, \delta_3 \in (0,\frac{1}{64}), \delta_2 \in (0,\frac{1}{32})\) be fixed constants and
\[A_s=\frac{1}{\sqrt{n}}(a_{ij}\chi_{|a_{ij}| \leq n^{1/4-\delta_1}}), \hspace{0.1cm} A_m=\frac{1}{\sqrt{n}}(a_{ij}\chi_{n^{1/4-\delta_1}<|a_{ij}| \leq n^{3/8+\delta_2}}), \hspace{0.1cm} A_b=\frac{1}{\sqrt{n}}(a_{ij}\chi_{|a_{ij}|> n^{3/8+\delta_2}}),\]
for which
\[A=A_s+A_m+A_b.\]
Theorem \(2.1\) of Benaych-Georges and Péché~[\ref{benaychpeche}], stated below, is employed next to bound \(||A_m||\) (as well as \(||A_s||\) in later subsections).

\begin{theorem}[{Benaych-Georges and Péché~[\ref{benaychpeche}]}]\label{thbenaychpeche}
     Suppose \(\tilde{A}=(a_{ij})_{1 \leq i,j \leq n}\) is a symmetric real-valued random matrix with at most \(n^{\mu}\) non-zero entries on each row, \((a_{ij})_{1 \leq i \leq j \leq n}\) i.i.d., of variance one, with distribution symmetric and regularly varying of index \(\alpha>2.\) Then for \(A_n=(a_{ij}\chi_{|a_{ij}| \leq n^{\gamma}})\) and any constants \(\gamma, \gamma', \gamma''>0\) with \(\frac{\mu}{2} \leq \gamma', \frac{\mu}{4}+\gamma+\gamma''<\gamma',\)
    \[\mathbb{E}[tr(A_n^{2s_n})] \leq L(n)n^{1+2\gamma}s_n^{-3/2}(2n^{\gamma'})^{2s_n}\]
    for a slowly varying function \(L,\) and all \(s_n \in \mathbb{N}, s_n \leq n^{\gamma''}.\)
\end{theorem}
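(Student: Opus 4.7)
The plan is to adapt the Sinai-Soshnikov moment method ([\ref{sinaisosh}]) to the truncated sparse setting. Expanding
\[\mathbb{E}[tr(A_n^{2s_n})] = \sum_{i_0, i_1, \ldots, i_{2s_n-1}} \mathbb{E}\left[\prod_{k=0}^{2s_n-1} a_{i_k i_{k+1}} \chi_{|a_{i_k i_{k+1}}| \leq n^\gamma}\right] \quad (i_{2s_n} = i_0),\]
each summand is an expectation along a closed walk of length \(2 s_n\) in \(\{1, \ldots, n\}.\) Because the law of \(a_{11}\) is symmetric, any undirected edge appearing an odd number of times in a walk contributes zero, so only walks in which every edge is traversed an even number of times survive.

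I would then group surviving walks by shape: the number \(p+1\) of distinct vertices, the number \(q\) of distinct edges, and the multiplicity profile \((m_e)_e\) with \(m_e \geq 2\) and \(\sum_e m_e = 2 s_n.\) The sparsity assumption bounds the walk count of a fixed shape by \(n \cdot n^{\mu p}:\) after choosing a root (\(n\) options), each newly introduced vertex has at most \(n^\mu\) possibilities instead of \(n.\) Combining this with the regular-variation bound
\[\mathbb{E}[a_{11}^{2k} \chi_{|a_{11}| \leq n^\gamma}] \leq C\, L(n)\, n^{\gamma(2k - 2)} \quad (k \geq 1),\]
which uses \(\alpha > 2\) to dominate \(n^{\gamma(2k-\alpha)}\) by \(n^{\gamma(2k-2)},\) a single shape contributes at most \(C^q L(n)^q \cdot n \cdot n^{\mu p + 2 \gamma(s_n - q)}.\)

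The dominant class consists of simple tree walks (\(p = q = s_n,\) every edge of multiplicity exactly two), counted by the Catalan number \(\sim \pi^{-1/2} s_n^{-3/2} 4^{s_n},\) whose contribution is of order \(s_n^{-3/2} \cdot n \cdot (2 n^{\mu/2})^{2 s_n}\) and therefore already lies inside the target bound since \(\gamma' \geq \mu/2.\) For the remaining walks I would parametrize by the excess \(k = q - p\) and the defect \(s_n - q\) (number of extra edge traversals), and appeal to the standard Sinai-Soshnikov enumeration to bound the number of shapes by \(s_n^{O(k + (s_n - q))}\) times the tree case, so that summing over these classes and over \((p, q)\) yields a non-tree contribution dominated by \(L(n)\, n^{1+2\gamma} s_n^{-3/2} (2 n^{\gamma'})^{2 s_n}.\)

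The main obstacle will be executing this subdominant bookkeeping under sparsity: each extra traversal of an edge pays a moment factor up to \(n^{2\gamma},\) while each collapsed vertex saves only \(n^\mu\) rather than the usual \(n,\) so the trade-off is tighter than in the classical setting. The hypothesis \(\mu/4 + \gamma + \gamma'' < \gamma'\) together with \(s_n \leq n^{\gamma''}\) is calibrated precisely so that the moment inflation from heavy-tailed truncation and the \(s_n^{O(\cdot)}\) combinatorial overhead are absorbed into \((2 n^{\gamma'})^{2 s_n},\) leaving one factor of \(n^{2\gamma}\) slack in the prefactor to handle the worst ``bad-edge'' shape class. Verifying this balance term by term, while carefully replacing \(n\) by \(n^\mu\) at every new-vertex step in the classical Sinai-Soshnikov accounting, is where the bulk of the technical effort will go.
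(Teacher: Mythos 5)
First, a point of reference: the paper does not prove Theorem~\ref{thbenaychpeche} at all — it is imported verbatim from Theorem 2.1 of [\ref{benaychpeche}] — so there is no internal proof to compare yours against. What the paper does contain is a parallel run of exactly the machinery you propose (subsection~\ref{method}), and your plan is the right one: the Sinai--Soshnikov path expansion over even closed walks, with the sparsity hypothesis entering through the ``$n^{\mu}$ choices per new vertex'' count, the truncation entering through \(\mathbb{E}[a^{2k}\chi_{|a|\le n^{\gamma}}]\le CL(n)n^{\gamma(2k-2)}\) (valid since \(\alpha>2\)), and the tree class correctly identified as dominant with the role of \(\gamma'\ge\mu/2\) made explicit.

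Two issues, one concrete and one structural. The concrete one: your per-shape bound \(C^{q}L(n)^{q}\cdot n\cdot n^{\mu p+2\gamma(s_n-q)}\) charges the factor \(CL(n)\) to every one of the \(q\) distinct edges. Applied to the dominant tree class (\(q=s_n\)) this yields \((CL(n))^{s_n}\), which is not absorbable into a slowly varying prefactor and would destroy the stated bound; indeed your separate tree estimate silently drops it. Edges of multiplicity exactly \(2\) must be charged only \(\mathbb{E}[a^{2}\chi]\le 1\); the factor \(CL(n)\) is paid only by the \(E\) edges of multiplicity at least \(4\), and one must then argue — as in Step 5 of subsection~\ref{method}, using \(L(n)<n^{2\delta}\) together with the exponent savings each such edge forces — that \(L(n)^{E}\) collapses to a single \(L(n)\). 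The structural one: the entire difficulty of the theorem lives in the clause you defer to ``the standard Sinai--Soshnikov enumeration,'' namely bounding the number of walks whose shape has collapsed vertices or repeated traversals; this is the content of the marked-edge pairing and of the analogue of Lemma~\ref{lemma1} (at most \(2k\) unmarked departures from a vertex marked \(k\) times), and it is precisely where the calibration \(\mu/4+\gamma+\gamma''<\gamma'\) with \(s_n\le n^{\gamma''}\) is consumed. As a proposal your outline is sound and matches the source's strategy, but the proof is not yet there until that enumeration is executed under the sparse vertex count.
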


\par
Notice Theorem~\ref{thbenaychpeche} holds for matrices of the form \(A_n=(a_{ij}\chi_{n^{\gamma_0}<|a_{ij}| \leq n^{\gamma}})\) too since a lower bound on the entries can only decrease the moments of the random variables appearing in the trace and the number of non-zero elements among them. Consider \(\tilde{A}_m:=\sqrt{n}A_m=(a_{ij}\chi_{n^{1/4-\delta_1}<|a_{ij}| \leq n^{3/8+\delta_2}}).\) The probability of \(\tilde{A}_m\) having at least \(l\) non-zero entries on a given row is at most
\[\binom{n}{l}(2cn^{1/4-\delta_1})^{-4l} \leq \frac{n^l}{l!} \cdot (2c)^{-4l}n^{-l+4\delta_1l}=(2c)^{-4l} \cdot \frac{n^{4\delta_1l}}{l!} \leq (2c)^{-4l} \cdot \frac{n^{4\delta_1l}}{(l/e)^l} \leq (2c/e)^{-4l}n^{-4\delta_1 l}\]
for \(l=n^{8\delta_1}, n \geq n(c).\) Hence for \(E_0,\) the event that each row of \(\tilde{A}_m\) has at most \(l\) non-zero entries,
\[\mathbb{P}(E_0^c) \leq n(2c/e)^{-4l}n^{-4\delta_1 l}=o(1).\]
Conditioning on \(E_0,\) Theorem~\ref{thbenaychpeche} yields
\[\mathbb{E}[tr(\tilde{A}_m^{2s_n}) \hspace{0.05cm} | \hspace{0.05cm}  E_0] \leq L(n)n^{1+2\gamma}s_n^{-3/2}(2n^{\gamma'})^{2s_n}\]
for 
\[\mu=8\delta_1, \hspace{0.1cm} \gamma=\frac{3}{8}+\delta_2, \hspace{0.05cm} \gamma''=\delta_3, \hspace{0.1cm} \gamma'=\frac{\mu}{4}+\gamma+2\gamma''=2\delta_1+\frac{3}{8}+\delta_2+2\delta_3 \in (\frac{\mu}{2},\frac{1}{2})=(4\delta_1,\frac{1}{2})\]
(subsection~\ref{twistedmethod} presents in detail why such conditional expectations can replace their unconditional counterparts at the cost of a factor \(c(\epsilon)\) for \(\epsilon>0,\) which can be evidently absorbed by \(L\)). Chebyshev's inequality then gives for \(\delta_4>0\) and \(n\) sufficiently large,
\[\mathbb{P}(tr(\tilde{A}_m^{2s_n}) \geq (2n^{\gamma'})^{2s_n}n^{2s_n\delta_4}) \leq \mathbb{P}(E^c_0)+n^{2+2\gamma}n^{-2s_n\delta_4}=o(1)\]
by choosing \(s_n=\lfloor{} n^{\gamma''} \rfloor.\)
Since \(\gamma'<\frac{1}{2},\) \(\delta_4=\frac{1}{2}(\frac{1}{2}-\gamma')>0\) entails
\begin{equation}\label{mediummatrix}
    ||A_m|| \leq n^{-1/2}(tr(\tilde{A}_m^{2s_n}))^{\frac{1}{2s_n}} \leq 2n^{\gamma'+\delta_4-\frac{1}{2}}=o(1)
\end{equation}
with high probability.
\par
Proceed now with the split of \(A_b.\) Let \(E_1\) be the event that the non-zero entries of \(A_b\) are off-diagonal and any two lie on different rows: by a union bound,
\[\mathbb{P}(E_1^c) \leq n \cdot 2c(n^{3/8+\delta_2})^{-4} + n^2 \cdot n \cdot (2c(n^{3/8+\delta_2})^{-4})^2=o(1)\]
for \(n \geq n(c).\) For \(\kappa>0\) and a sequence \(m=m_n \to \infty,\) let
\[A_b=\frac{1}{\sqrt{n}}(a_{ij}\chi_{n^{3/8+\delta_2} \leq |a_{ij}| \leq \kappa \sqrt{n}})+\frac{1}{\sqrt{n}}(a_{ij}\chi_{|a_{ij}|> \kappa \sqrt{n}}):=A_{b,\kappa}+A_{B,\kappa},\]
and \(E_2\) the event that \(A_{B,\kappa}\) has at most \(2m\) non-zero entries. Then
\[\mathbb{P}(E_2^c) \leq \binom{n^2}{m}\cdot (2c(\kappa \sqrt{n})^{-4})^m \leq \frac{n^{2m}}{m!} \cdot (2c\kappa^{-4})^mn^{-2m}=\frac{(2c\kappa^{-4})^m}{m!}=o(1)\]
(at least \(m\) elements of size at least \(\kappa \sqrt{n}\) must exist among the \(\frac{n^2+n}{2} \leq n^2\) i.i.d. random variables \((a_{ij})_{1 \leq i \leq j \leq n}\)). Moreover, when \(E_1\) occurs, \(A_{b,\kappa}\) has at most one non-zero entry per row and so
\begin{equation}\label{bigish}
    ||A_{b,\kappa}|| \leq \max_{1 \leq i,j \leq n}{|(A_{b,\kappa})_{ij}|} \leq \kappa.
\end{equation}
\par
In virtue of (\ref{mediummatrix}) and (\ref{bigish}), for any \(\kappa \leq \epsilon\) and fixed sequence \(m=m_n \to \infty,\) (\ref{goal}) and (\ref{goal2}) ensue from
\begin{equation}\label{goalkappa}\tag{8'}
    \lim_{n \to \infty}{\mathbb{P}(||A_\kappa||>f(M)+\epsilon \hspace{0.05cm} | \hspace{0.05cm} E_2, \max{A} \leq M)}=0,
\end{equation}
\begin{equation}\label{goalkappa2}\tag{9'}
    \lim_{n \to \infty}{\mathbb{P}(||A_\kappa||<f(\max{A})-\epsilon| \hspace{0.05cm} E_2})=0,
\end{equation}
where \(A_\kappa:=A_s+A_{B,\kappa}.\) These two limits are the subject of the forthcoming section.

\section{Conditional Operator Norms}\label{section2}

Identities ((\ref{goalkappa}) and (\ref{goalkappa2})) are justified by analyzing 
\[tr(A_{\kappa}^{2p})-tr(A_s^{2p})=tr((A_s+A_{B,\kappa})^{2p})-tr(A_s^{2p})\] 
for large integers \(p.\) Roughly speaking, Weyl's inequalities and the sparsity of \(A_{B,\kappa}\) entail this difference grows at the same rate as \(||A_{\kappa}||^{2p}.\) Furthermore, by conditioning on the appropriate events, its expectation can be squeezed between \(s(p,\max{A})\) and \(s(p,M),\) (up to constant powers \(p\)), where \(s: \mathbb{N} \times (0,\infty) \to [0,\infty)\) is the polynomial function yielding the corresponding conditional expectations, whenever \(m=m_n,p=p_n,n\) grow to infinity at completely different rates (\(m \leq \log{\log{p}}, p \leq \log{\log{n}}\) suffice). Henceforth such a growth hierarchy is implicitly assumed.
\par
Let us introduce the notation needed for the conditionings to come. Denote by \(\mathcal{S}=\mathcal{S}_{n,m}\) the set of subsets \(S \subset \{(i,j): 1 \leq i \leq j \leq n\}\) with the following properties: 
\par
\((a) |S| \leq m;\) 
\par
\((b)\) any \((i,j) \in S\) has \(i<j;\)
\par
\((c)\) all pairwise distinct elements \((i_1,j_1),(i_2,j_2)\) of \(S\) satisfy \(\{i_1,j_1\} \cap \{i_2,j_2\}=\emptyset. \newline\) 
Consider the events
\[E(S,\kappa,M)=\{\max_{i \leq j, (i,j) \not \in S}{|a_{ij}|} \leq \kappa \sqrt{n}<\min_{(i,j) \in S}{|a_{ij}|} \leq \max_{(i,j) \in S}{|a_{ij}|} \leq M\sqrt{n}\},\]
\[E^{B}(S,\kappa,M)=E(S,\kappa,M) \cap \{a_{ij}, (i,j) \in S\}\]
(the first requires the set of positions of the non-zero entries of \(A_{B,\kappa}\) to be \(S \cup \{(j,i): (i,j) \in S\},\) while the second also fixes their values). Clearly, \((E(S,\kappa,M))_{S \in \mathcal{S}}\) are pairwise disjoint, and
\[E_1 \cap E_2 \cap E_3 \subset \cup_{S \in \mathcal{S}}{E(S,\kappa,M)}\]
where
\[E_3=\{\max_{1 \leq i,j \leq n}{|a_{ij}|} \leq M\sqrt{n}\}.\] 
Since \(E_1 \cap E_2\) has probability tending to one as \(n \to \infty,\) a sufficient condition for (\ref{goalkappa}) is
\begin{equation}\label{firstfinalgoal}\tag{8''}
    \lim_{n \to \infty}{\mathbb{P}(||A_\kappa||>f(M)+\epsilon \hspace{0.05cm} | \hspace{0.05cm} E(S,\kappa,M))}=0
\end{equation}
uniformly in \(S \in \mathcal{S}\) (i.e., the bounds involve solely \(M, m, n\)). Inequality (\ref{goalkappa2}) ensues from a similar uniform convergence in \(S \in \mathcal{S}, S \ne \emptyset:\)
\begin{equation}\label{secondfinalgoal}\tag{9''}
    \lim_{n \to \infty}{\mathbb{P}(||A_\kappa||<f(\max{A})-\epsilon \hspace{0.05cm} | \hspace{0.05cm} E^B(S,\kappa,M))}=0
\end{equation}
(this yields
\[\limsup_{n \to \infty}{\mathbb{P}(||A_\kappa||<f(\max{A})-\epsilon)} \leq \mathbb{P}(\max{A} \not \in [\kappa,M]);\]
then use (\ref{maxdist}), and let \(\kappa \to 0,M \to \infty\)). For the sake of simplicity, denote conditioning on these events by \(*,**,\) respectively (i.e., by an abuse of notation,
\[\mathbb{P}_*(||A_\kappa||>f(M)+\epsilon):=\mathbb{P}(||A_\kappa||>f(M)+\epsilon \hspace{0.05cm} | \hspace{0.05cm} E(S,\kappa,M)),\]
\[\mathbb{P}_{**}(||A_\kappa||<f(\max{A})-\epsilon):=\mathbb{P}(||A_\kappa||<f(\max{A})-\epsilon \hspace{0.05cm} | \hspace{0.05cm} E^B(S,\kappa,M)),\]
for \(S \in \mathcal{S}\) fixed).
\par
The means of bounding the conditional probabilities in (\ref{firstfinalgoal}) and (\ref{secondfinalgoal}) is computing 
\[\mathbb{E}_*[tr(A_{\kappa}^{2p})-tr(A_s^{2p})], \hspace{0.2cm} \mathbb{E}_{**}[tr(A_{\kappa}^{2p})-tr(A_s^{2p})],\]
by employing the combinatorial technique behind the proof of Theorem~\ref{thbenaychpeche}, pioneered by Sinai and Soshnikov in [\ref{sinaisosh}], and subsequently used in several contexts (e.g., Sinai and Soshnikov~[\ref{sinaisosh2}], Soshnikov~[\ref{sohnikov2}], Auffinger et al.~[\ref{auffinger}]).
\par
In the rest of this section,
\begin{itemize}
    \item \ref{method} presents in detail the combinatorial method of Sinai and Soshnikov in [\ref{sinaisosh}];
    
    \item \ref{twistedmethod} uses this counting device on the first conditional expectation above;
    
    \item \ref{closedformcombfunction} connects the functions \(s\) and \(f:\) \(\lim_{p \to\infty}{s(M,p)^{1/2p}}=f(M)\) for \(M>0.\)
\end{itemize}

\subsection{Large Moments}\label{method}

Suppose \(p \in \mathbb{N}\) and \(B=(b_{ij})_{1 \leq i,j \leq n} \in \mathbb{R}^{n \times n}\) is a symmetric random matrix for which \((b_{ij})_{1 \leq i \leq j \leq n}\) are i.i.d. and \(b_{11}\) has a symmetric distribution with \(\mathbb{E}[b_{11}^2] \leq 1, \mathbb{E}[b_{11}^{2l}] \leq L(n)n^{\delta(2l-4)}, 2 \leq l \leq p, \delta>0,\) and \(L:\mathbb{N} \to [1,\infty), L(n)<n^{2\delta}.\) The content of this subsection is
\[\mathbb{E}[tr(B^{2p})] \leq L(n)2^{2p}p!\sum_{(n_1,\hspace{0.05cm} ...\hspace{0.05cm} ,n_p)}{n^{1+\sum_{1 \leq k \leq p}{n_k}+2\delta\sum_{k \geq 2}{kn_k}}\prod_{1 \leq k \leq p}{\frac{1}{(k!)^{n_k}n_k!}}\prod_{2 \leq k \leq p}{(2k)^{kn_k}}}\]
where \(n_1,\hspace{0.05cm} ...\hspace{0.05cm} ,n_p\) are non-negative integers with \(\sum_{1 \leq k \leq p}{kn_k}=p.\) This inequality and \(||B||^{2p} \leq tr(B^{2p})\) provide some non-trivial information about \(||B||\) as long as \(\delta\) and \(p\) are small enough, in which case the sum on the right-hand side is bounded by a simple expression (these computations are included at the end of this subsection).
\par
Clearly,
\begin{equation}\label{gentrace}
    \mathbb{E}[tr(B^{2p})]=\sum_{(i_0, i_1, \hspace{0.05cm} ... \hspace{0.05cm}, i_{2p-1})}{\mathbb{E}[b_{i_0i_1}b_{i_1i_2}...b_{i_{2p-1}i_{0}}]}.
\end{equation}
Let \(\mathbf{i}:=(i_0,i_1, \hspace{0.05cm} ...\hspace{0.05cm} ,i_{2p-1},i_0)\) and \(b_{\mathbf{i}}:=b_{i_0i_1}b_{i_1i_2}...b_{i_{2p-1}i_{0}}.\) Interpret \(\mathbf{i}\) as a directed cycle with vertices among \(\{1,2, \hspace{0.05cm} ... \hspace{0.05cm}, n\}\) and call \((i_{k-1},i_k)\) its \(k^{th}\) edge for \(1 \leq k \leq 2p,\) where \(i_{2p}:=i_0;\) for \(u,v \in \{1,2, \hspace{0.05cm} ... \hspace{0.05cm}, n\},\) \((u,v)\) is a directed edge from \(u\) to \(v,\) whereas \(uv\) is undirected (the former are the building blocks of the cycles underlying the trace in (\ref{gentrace}), while the latter determine their expectations): in particular, \(uv=vu.\) Call \(\mathbf{i}\) an \textit{even cycle} if each undirected edge appears an even number of times in it; using symmetry, \(\mathbb{E}[b_{\mathbf{i}}]=0\) unless \(\mathbf{i}\) is an even cycle.
\par
The crux of the technique developed by Sinai and Soshnikov in~[\ref{sinaisosh}] is a change of summation in (\ref{gentrace}), from even cycles \(\mathbf{i}\) to \(p\)-tuples of non-negative integers \((n_1,n_2, \hspace{0.05cm} ... \hspace{0.05cm} , n_p)\) satisfying \(\sum_{1 \leq k \leq p}{kn_k}=p.\) This is achieved by mapping each such cycle to a tuple of this type, and bounding from above the sizes of the preimages of this transformation and the expectations of their elements. For \(\mathbf{i},\) call an edge \((i_k,i_{k+1})\) and its right endpoint \(i_{k+1}\) \textit{marked} if an even number of copies of \(i_ki_{k+1}\) precedes it: i.e., if \(\{t \in \mathbb{Z}: 0 \leq t \leq k-1, i_ti_{t+1}=i_ki_{k+1}\}\) has even size, and pair each unmarked edge with its last marked copy (i.e., for \((i_k,i_{k+1})\) unmarked, pair it with \((i_{t'},i_{t'+1}),\) where \(t'=\max{\{t \in \mathbb{Z}: 0 \leq t \leq k-1, i_ti_{t+1}=i_ki_{k+1}\}}\)). As it will soon become apparent, the analysis of such cycles, and consequently of the trace, relies on this pairing. Each even cycle \(\mathbf{i}\) has \(p\) marked edges, and any vertex \(j \in \{1,2, \hspace{0.05cm} ... \hspace{0.05cm}, n\}\) of \(\mathbf{i},\) apart perhaps from \(i_0,\) is marked at least once (the first edge of \(\mathbf{i}\) containing \(j\) is of the form \((i,j)\) since \(i_0 \ne j,\) and no earlier edge is adjacent to \(j\)). For \(0 \leq k \leq p,\) denote by \(N_{\mathbf{i}}(k)\) the set of \(j \in \{1,2, \hspace{0.05cm} ... \hspace{0.05cm}, n\}\) marked exactly \(k\) times in \(\mathbf{i}\) with \(n_k:=|N_{\mathbf{i}}(k)|.\) Then
\begin{equation}\label{tuplecond}
    \sum_{0 \leq k \leq p}{n_k}=n, \hspace{0.2cm} \sum_{1 \leq k \leq p}{kn_k}=p.
\end{equation}
\par
Having constructed a \(p\)-tuple of non-negative integers \((n_1,n_2, \hspace{0.05cm} ... \hspace{0.05cm}, n_p)\) satisfying \(\sum_{1 \leq k \leq p}{kn_k}=p\) from an even cycle \(\mathbf{i},\) the final task is obtaining upper bounds on the number of such cycles mapped to a given tuple (steps \(1-4\)) and their individual contributions (step \(5\)). In what follows, \((n_1,n_2, \hspace{0.05cm} ... \hspace{0.05cm}, n_p)\) remains fixed, and \(\mathbf{i}\) is any even cycle mapped to it by the procedure described above.
\par
\underline{Step \(1.\)} Map \(\mathbf{i}\) to a Dyck path \((s_1,s_2, \hspace{0.05cm} ... \hspace{0.05cm} ,s_{2p}),\) where \(s_k=+1\) if \((i_{k-1},i_k)\) is marked, and \(s_k=-1\) if \((i_{k-1},i_k)\) is unmarked. The number of such paths is the Catalan number \(C_p=\frac{1}{p+1}\binom{2p}{p}.\)
\par
\underline{Step \(2.\)} Once the positions of the marked edges in \(\mathbf{i}\) are chosen (i.e., a Dyck path), establish the order of their marked vertices. There are at most 
\[\frac{p!}{\prod_{1 \leq k \leq p}{(k!)^{n_k}}} \cdot \frac{1}{\prod_{1 \leq k \leq p}{n_k!}}\]
possibilities as each is a partition of a set of size \(p\) in \(n_1+...+n_p\) subsets with \(n_k\) of them of size \(k.\)
\par
\underline{Step \(3.\)} Select the distinct vertices appearing in \(\mathbf{i},\)
\[V(\mathbf{i}):=\cup_{0 \leq k \leq 2p-1}{\{i_k\}},\] 
one at a time, by reading the edges of \(\mathbf{i}\) in order, starting at \((i_0,i_1).\) There are at most 
\[n^{1+\sum_{1 \leq k \leq p}{n_k}}\] 
such sets because \(|V(\mathbf{i})| \leq 1+\sum_{1 \leq k \leq p}{n_k}\) (recall that any vertex of \(\mathbf{i},\) except perhaps from \(i_0,\) is marked at least once).
\par
\underline{Step \(4.\)} Choose the remaining vertices of \(\mathbf{i}\) from \(V(\mathbf{i}),\) by reading anew the edges of \(\mathbf{i}\) in order, beginning at \((i_0,i_1)\) (step \(3\) only established the first appearance of each element of \(V(\mathbf{i})\) in \(\mathbf{i}\)). Observe that only the right ends of the unmarked edges have yet to be decided: the first edge \((i_0,i_1)\) is fixed as \(i_0, i_1\) have already been chosen (\(i_1\) is marked); by induction, any subsequent edge has its left end fixed, and therefore only its right end has yet to be chosen. This yields that marked edges are fully labeled: step \(2\) determines their positions in \(\mathbf{i},\) while step \(3\) appoints their right endpoints. 
\par
The number of possibilities in this case is at most \(\prod_{2 \leq k \leq p}{(2k)^{kn_k}}:\)

\begin{lemma}\label{lemma1}
If \(v \in N_{\mathbf{i}}(k),\) then the number of unmarked edges of the form \((v,u)\) is at most 
\(
\begin{cases} 
2k, & k \geq 1\\
1, & k=1\\
\end{cases}.
\)
\end{lemma}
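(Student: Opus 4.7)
The plan is to exploit the closed-walk structure of $\mathbf{i}$ at $v$, together with the fact that every undirected edge is traversed an even number of times, to equate the number of unmarked edges leaving $v$ directly with the number $k$ of marked arrivals. I would fix notation first: let $s$ denote the number of visits of $\mathbf{i}$ to $v$, and partition the $s$ arrivals and $s$ departures at $v$ by marking into four counts --- $k$ marked arrivals, $k'$ unmarked arrivals, $\ell'$ marked departures, $\ell$ unmarked departures.

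Next I would record two balance identities. Since $\mathbf{i}$ is closed, arrivals at $v$ match departures, giving
\[k + k' \;=\; \ell + \ell' \;=\; s.\]
Since every undirected edge $vu$ is traversed an even number of times with the markings strictly alternating between consecutive traversals of the same edge, the total marked-incident count at $v$ equals the total unmarked-incident count, i.e.\
\[k + \ell' \;=\; k' + \ell.\]
Subtracting these two identities yields $k' = \ell'$ and, crucially, $\ell = k$.

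Because $\ell$ is precisely the number of edges of the form $(v,u)$ that are unmarked in $\mathbf{i}$, the stated bounds follow immediately: $\ell = k \le 2k$ when $k \ge 2$, and $\ell = 1$ when $k = 1$. No further combinatorics is needed for the lemma itself; the whole content is the two balance identities above.

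The hard part, should the lemma actually be used through the sharper per-decision bound needed for the $(2k)^{kn_k}$ factor in step~4 (namely, that at each unmarked-edge decision the number of admissible right endpoints is at most $2k$), would be to control $o_v(t{-})$ --- the number of currently-open marked edges incident to $v$ just before each unmarked-edge decision --- rather than the aggregate $\ell$. There I would express $o_v(t{-})$ as a signed sum of prefix counts $\alpha,\alpha',\beta,\beta'$ along the alternating arrival/departure sequence at $v$, invoke $\alpha + \alpha' = j$ and $\beta + \beta' = j-1$ at the $j$-th visit (for $v \ne i_0$), and combine $\alpha \le k$ with $\beta \le j-1$ to collapse the expression to $o_v(t{-}) \le 2k - 1$; the mild boundary adjustment when $v = i_0$ (where the incident sequence begins with a departure rather than an arrival) loses exactly one unit and gives $o_v(t{-}) \le 2k$, which is still absorbed by the looseness built into the stated $2k$.
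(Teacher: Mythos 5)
Your argument is correct, and it in fact proves something strictly stronger than the lemma: the number of unmarked edges of the form $(v,u)$ is \emph{exactly} $k$. The two identities are both sound for an even cycle: arrivals at $v$ equal departures from $v$ because the walk is closed, and for each undirected edge incident to $v$ the traversals alternate marked/unmarked starting with marked (the $m$-th occurrence is marked iff $m$ is odd), so an even total forces equally many marked and unmarked traversals; summing over edges at $v$ and subtracting gives $U_{\mathrm{out}}(v)=M_{\mathrm{in}}(v)=k$ (and $M_{\mathrm{out}}(v)=U_{\mathrm{in}}(v)$). This is a genuinely different route from the paper's: there, the unmarked departures are grouped into the intervals $[n_j,n_{j+1}]$ between consecutive marked arrivals, the set $S_j$ of still-open marked edges at $v$ is tracked, and the increments $u_j\le a_j-a_{j+1}+2$ are telescoped, with a separate clipping argument for $v=i_0$; your global double count avoids all of that case analysis and needs no special treatment of $i_0$ (it simply gives $\ell=k=0$ when $i_0$ is never marked). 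Two small points. First, state the handshake identity carefully when loops $(v,v)$ occur: a loop traversal is simultaneously an arrival and a departure, so it is counted twice on each side of $k+\ell'=k'+\ell$, which preserves the identity but deserves a sentence. Second, your closing paragraph is right that what Step~4 ultimately charges per vertex, namely $(2k)^{k}$, tacitly combines a count of unmarked departures (your $\ell=k$ supplies the exponent) with a per-decision bound on the number of admissible right endpoints (essentially the paper's $a_j$-analysis supplies the base); that second ingredient is not part of the lemma as stated, so your main argument fully discharges the lemma, and the sketch in your last paragraph is the right shape for the complementary bound but would need to be written out to be used.
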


\begin{proof}
Let \((t_j,v)=(i_{n_j},i_{n_j+1}), 1 \leq j \leq k\) for \(n_1<n_2< ... <n_k\) be the marked edges with right endpoints \(v,\) and \(u_j\) the number of unmarked edges of the type \((v,u)\) with index (i.e., position in \(\mathbf{i}\)) in \([n_j,n_{j+1}],\) where \(n_{k+1}:=2p.\) Because there is no edge of the latter type preceding \((t_1,v),\) and \((t_j,v), 1 \leq j \leq k\) are marked, the statement above is equivalent to 
\(\begin{cases} 
\sum_{1 \leq j \leq k}{u_j} \leq 2k, & k \geq 1\\
u_1 \leq 1, & k=1
\end{cases}.\)
\par
For \(1 \leq j \leq k,\) denote by \(S_j\) the set of marked edges adjacent to \(v,\) with index \(n<n_{j},\) and unmarked counterparts of index \(n' \geq n_j.\) Take \(a_j:=|S_j|,\) fix an integer \(j \in [1,k],\) and suppose 
\[(i_{n_j},i_{n_j+1}, \hspace{0.05cm} ... \hspace{0.05cm}, i_{n_{j+1}},i_{n_{j+1}+1})=(t_j,v,s_1,*,\tilde{s}_1,v,s_2, \hspace{0.05cm} ... \hspace{0.05cm} ,\tilde{s}_{l_j},v,s_{l_{j}+1},*,t_{j+1},v)\]
where \(*\) are sequences of vertices that do not contain \(v.\) Then the edges \((\tilde{s}_1,v), \hspace{0.05cm} ... \hspace{0.05cm} ,(\tilde{s}_{l_j},v)\) are unmarked and their marked counterparts are among \((t_j,v),(v,s_1),\hspace{0.05cm} ... \hspace{0.05cm},(v,s_{l_j}),\) and the elements of \(S_j.\) This gives
\[a_{j+1} \leq (a_j+l_j+2)-(u_j+l_j)=a_j+2-u_j\] 
because \(S_{j+1} \subset S_j \cup \{(t_j,v),(v,s_1),\hspace{0.05cm} ... \hspace{0.05cm}, (v,s_{l_j}),(v,s_{l_{j}+1})\},\) and at least \(u_j+l_j\) elements of the latter set are not in the former since among them, \(u_j\) are unmarked and \(l_j\) are the marked counterparts of \((\tilde{s}_1,v), \hspace{0.05cm} ... \hspace{0.05cm}, (\tilde{s}_{l_j},v).\) 
\par
Hence, \(u_j \leq a_j-a_{j+1}+2\) for all \(1 \leq j \leq k,\) from which 
\[\sum_{1 \leq j \leq k}{u_j} \leq 2k+a_1-a_{k+1}.\] 
\par
If \(v \ne i_0,\) then \(a_1=0\) and so \(\sum_{1 \leq j \leq k}{u_j} \leq 2k.\) Else, \(v=i_0, a_1 \leq 1\) (if \(i_0=i_1,\) then \(n_1=0, a_1=0;\) suppose next \(i_0 \ne i_1;\) if \(i_j \ne i_0\) for \(2 \leq j<n_1,\) then \(S_1=\{(i_0,i_1)\}, a_1=1;\) otherwise, consider \(m\) minimal with \(2 \leq m< n_1, i_m=i_0;\) then \(i_{m-1}=i_1\) because otherwise \((i_{m-1},i_m)=(i_{m-1},i_0)\) would be marked; ignore \((i_0,i_1,\hspace{0.05cm} ... \hspace{0.05cm}, i_{m-1}),\) and proceed with an analogous analysis for \((i_m, \hspace{0.05cm} ... \hspace{0.05cm} ,i_{n_1}):\) this clipping does not affect the pairs of marked edges adjacent to \(i_0,\) and the process is iterated finitely many times), and
\[a_{k+1} \leq (a_k+1+l_k)-(u_k+l_k)=a_k+1-u_k,\] 
which again yields \(\sum_{1 \leq j \leq k}{u_j} \leq 2k.\) To justify this last inequality, notice that in this situation,
\[(i_{n_k},i_{n_k+1}, \hspace{0.05cm} ... \hspace{0.05cm}, i_{n_{2p}})=(t_k,v,s_1,*,\tilde{s}_1,v,s_2, \hspace{0.05cm} ... \hspace{0.05cm}, \tilde{s}_{l_k-1},v,s_{l_k},*,\tilde{s}_{l_k},v),\]
the edges \((\tilde{s}_1,v), \hspace{0.05cm} ... \hspace{0.05cm}, (\tilde{s}_{l_k},v)\) are unmarked, and \(S_{k+1} \subset S_k \cup \{(t_k,v),(v,s_1),\hspace{0.05cm} ... \hspace{0.05cm},(v,s_{l_k})\}\) with at least \(u_k+l_k\) elements contained in the latter set but not in the former.
\par
Suppose next \(k=1.\) If \(v \ne i_0,\) then
\[\mathbf{i}=(i_0,*,t_1,v,s_1,*,\tilde{s}_1,v,s_2,*, \hspace{0.05cm} ...\hspace{0.05cm}, \tilde{s}_l,v,s_{l+1},*,i_0).\]
Clearly \((\tilde{s}_1,v), \hspace{0.05cm} ... \hspace{0.05cm}, (\tilde{s}_l,v),(v,s_{l+1})\) are unmarked, which implies the other \(l+1\) edges containing \(v\) are marked. If \(v=i_0,\) then
\[\mathbf{i}=(v,s_0,*,t_1,v,s_1,*,\tilde{s}_1,v,s_2,*,\hspace{0.05cm} ... \hspace{0.05cm}, \tilde{s}_l,v,s_{l+1},*,s_{l+2},v);\]
\((\tilde{s}_1,v), \hspace{0.05cm} ... \hspace{0.05cm}, (\tilde{s}_{l},v),(s_{l+2},v)\) are unmarked, and so there is exactly another unmarked edge containing \(v\) (which is of the form \((v,u)\) because \((t_1,v)\) is marked). 
\end{proof}
\par
\underline{Step \(5.\)} Bound the expectation generated by \(\mathbf{i}.\) For any undirected edge \(e=uv,\) denote by \(2k(e)\) the number of times \(e\) appears in \(\mathbf{i}.\) The assumption on the moments of \(b_{11}\) implies
\[\mathbb{E}[b_{\mathbf{i}}] \leq \prod_{e \in \mathbf{i}, k(e) \geq 2}{L(n)n^{\delta(2k(e)-4)}}=L(n)^{E}n^{2\delta(\sum_{k(e) \geq 2}{k(e)}-2E)}\]
where \(E:=|\{e \in \mathbf{i}, k(e) \geq 2\}|.\) Any edge \(e=uv\) with \(k(e) \geq 2,\) except possibly for \(i_0i_1,\) has either \(u\) or \(v\) in \(N_{\mathbf{i}}(k)\) for some \(k \geq 2:\) \(k(uv) \geq 2\) entails either the desired conclusion or \(u,v \in N_{\mathbf{i}}(1),\) in which case \(k(uv)=2,\) and there are two marked copies of \(uv\) in \(\mathbf{i},\) \((u,v), (v,u);\) suppose without loss of generality they appear in this order; then \((u,v)\) is the first edge of \(\mathbf{i}\) (otherwise, for the vertex \(t\) preceding this apparition of \(u,\) the edge \((t,u)\) is marked). This observation gives
\begin{equation}\label{firstinequal}
    \sum_{k(e) \geq 2}{k(e)} \leq E+1+\sum_{k \geq 2}{kn_k},
\end{equation}
whereby for \(n>n(\delta),\)
\[\mathbb{E}[b_{\mathbf{i}}] \leq L(n)^{E}n^{2\delta(1+\sum_{k \geq 2}{kn_k}-E)} \leq L(n)n^{2\delta \sum_{k \geq 2}{kn_k}}\]
using that when \(E=0,\) the left-hand side is at most \(1,\) and \(L(n)<n^{2\delta}\) for \(E \geq 1.\)
\par
Putting together steps \(1-5\) yields
\[\mathbb{E}[tr(B^{2p})] \leq L(n)C_pp!\sum_{(n_1,\hspace{0.05cm} ...\hspace{0.05cm} ,n_p)}{n^{1+\sum_{1 \leq k \leq p}{n_k}+2\delta\sum_{k \geq 2}{kn_k}}\prod_{1 \leq k \leq p}{\frac{1}{(k!)^{n_k}n_k!}}\prod_{2 \leq k \leq p}{(2k)^{kn_k}}},\]
where the summation is over \(p\)-tuples of non-negative integers \((n_1,n_2, \hspace{0.05cm} ... \hspace{0.05cm}, n_p)\) with \(\sum_{1 \leq k \leq p}{kn_k}=p.\) 
\par
Lastly, an upper bound can be computed when \(\delta=1/4-\delta_1, p \leq n^{\delta_1}, n \geq n(\delta_1,c):\)
\begin{equation}\label{upperbound0}
    \mathbb{E}[tr(B^{2p})] \leq 2^{2p}L(n)n^{p+1}e^8.
\end{equation}
From above,
\[\mathbb{E}[tr(B^{2p})] \leq nL(n)C_pp!\sum_{(n_1,\hspace{0.05cm} ...\hspace{0.05cm} ,n_p)}{n^{\sum_{1 \leq k \leq p}{n_k}+(1/2-2\delta_1)(p-n_1)}\prod_{1 \leq k \leq p}{\frac{1}{(k!)^{n_k}n_k!}}\prod_{2 \leq k \leq p}{(2k)^{kn_k}}} =\]
\[=nL(n)n^{(1/2-2\delta_1)p}C_pp!\sum_{(n_1,\hspace{0.05cm} ...\hspace{0.05cm} ,n_p)}{n^{(1/2+2\delta_1)n_1+\sum_{2 \leq k \leq p}{n_k}}\prod_{1 \leq k \leq p}{\frac{1}{(k!)^{n_k}n_k!}}\prod_{2 \leq k \leq p}{(2k)^{kn_k}}}.\]
As \(n_1=p-\sum_{k \geq 2}{kn_k}\) and \(p! \leq n_1!p^{p-n_1}=n_1!p^{\sum_{k \geq 2}{kn_k}},\) the last expression is at most
\[nL(n)n^pC_p\sum_{(n_1,\hspace{0.05cm} ...\hspace{0.05cm} ,n_p)}{n^{\sum_{2 \leq k \leq p}{(1-k(1/2+2\delta_1))n_k}}\prod_{2 \leq k \leq p}{\frac{p^{kn_k}}{(k!)^{n_k}n_k!}}\prod_{2 \leq k \leq p}{(2k)^{kn_k}}} \leq \]
\[\leq nL(n)n^{p}C_p\sum_{(n_2,\hspace{0.05cm} ...\hspace{0.05cm} ,n_p)}{n^{-2\delta_1\sum_{2 \leq k \leq p}{kn_k}}\prod_{2 \leq k \leq p}{\frac{p^{kn_k}}{(k!)^{n_k}n_k!}} \prod_{2 \leq k \leq p}{(2k)^{kn_k}}}\]
employing \(\frac{1-k(1/2+2\delta_1)}{k} \leq \frac{1-2(1/2+2\delta_1)}{2}=-2\delta_1\) for \(k \geq 2.\) Since \(C_p \leq 2^{2p}, k! \geq (ke^{-1})^k,\) and \(2epn^{-2\delta_1} \leq 2en^{-\delta_1} \leq 1,\) the above sum is upper bounded by
\[2^{2p}L(n)n^{p+1}\sum_{(n_2,\hspace{0.05cm} ...\hspace{0.05cm} ,n_p)}{\prod_{2 \leq k \leq p}{\frac{(pn^{-2\delta_1})^{kn_k}(2k)^{kn_k}}{(ke^{-1})^{kn_k}n_k!}}}=2^{2p}L(n)n^{p+1}\sum_{(n_2,\hspace{0.05cm} ...\hspace{0.05cm} ,n_p)}{\prod_{2 \leq k \leq p}{\frac{(2epn^{-2\delta_1})^{kn_k}}{n_k!}}} \leq \]
\[\leq 2^{2p}L(n)n^{p+1}\sum_{(n_2,\hspace{0.05cm} ...\hspace{0.05cm} ,n_p)}{\prod_{2 \leq k \leq p}{\frac{(2epn^{-2\delta_1})^{n_k}}{n_k!}}} \leq  2^{2p}L(n)n^{p+1}\exp(\sum_{2 \leq k \leq p}{2epn^{-2\delta_1}})=\]
\[=2^{2p}L(n)n^{p+1}\exp(2ep(p-1)n^{-2\delta_1}) \leq 2^{2p}L(n)n^{p+1}e^8.\]

\subsection{Large Conditional Moments}\label{twistedmethod}

This subsection proves
\begin{equation}\label{trace}
    \mathbb{E}_*[tr((A_s+A_{B,\kappa})^{2p})-tr(A_s^{2p})] \leq 2mc(\kappa,c) \cdot (M^{2p}+n^{-\delta} (\max{(2,M)})^{2p}(2m)^{2p}(2p)^{16p^2}+s(p,M))
\end{equation}
for \(\delta=1/4-\delta_1, p \leq \sqrt{\log{n}}, m \leq n^{1/2}, n \geq n(\delta_1,c),\) and \(s:\mathbb{N} \times (0,\infty) \to [0,\infty)\) given by
\begin{equation}\label{s(p,M)function}
   s(p,M)=\sum_{1 \leq l \leq p-1}{M^{2l}\sum_{1 \leq t \leq p-l+1, 0 \leq l_0 \leq \min{(\frac{t-1}{2},l)}}{\binom{l-l_0+t-1}{l-l_0}\binom{t}{2l_0}}b_{p-l,t}},
\end{equation}
where \(\mathcal{C}(l)\) is the set of pairwise non-isomorphic even cycles of length \(2l,\) with \(n_1=l,\) and the first vertex unmarked (call two cycles \(\mathbf{i},\mathbf{j}\) of length \(2l\) \textit{isomorphic} if \(i_s=i_t \Longleftrightarrow j_s=j_t\) for all \(0 \leq s,t \leq 2l\)), and \(b_{l,t}\) is the number of vertices \(v\) of multiplicity \(t\) in \(\mathbf{i}=(i_0,i_1, \hspace{0.05cm} ... \hspace{0.05cm}, i_{2l}) \in \mathcal{C}(l):\) i.e., \(|\{0 \leq j \leq 2l, i_j=v\}|=t.\)
\par
In the classical case underlying (\ref{gentrace}), the sole contributors to the trace are the even cycles, which are in turn mapped to tuples \((n_1,n_2,\hspace{0.05cm} ...\hspace{0.05cm} ,n_p)\) with \(\sum_{1 \leq k \leq p}{kn_k}=p.\) In the current situation, this remains true, and the change of summation contains essentially one additional parameter: the non-zero entries of \(A_{B,\kappa},\) a matrix whose sparsity (encoded by \(S\)) is vital towards obtaining (\ref{trace}). 
\par
Since \(A_{\kappa}=A_{B,\kappa}+A_s,\) the left-hand side of (\ref{trace}) is a sum over cycles with contributions determined not only by their vertices, but also by whether their factors are entries of \(A_{B,\kappa}\) or \(A_s.\) Say \(a_{ij}\) \textit{belongs to} \(A_{B,\kappa}, A_s\) if \((i,j) \in S, (i,j) \not \in S,\) respectively, where \(1 \leq i,j \leq n.\) Then
\begin{equation}\label{tracemod}
    \mathbb{E}_*[tr((A_s+A_{B,\kappa})^{2p})-tr(A_s^{2p})]=n^{-p}\sum_{(i_0, i_1, \hspace{0.05cm} ... \hspace{0.05cm}, i_{2p-1})}{\mathbb{E}_*[a_{i_0i_1}a_{i_1i_2}...a_{i_{2p-1}i_{0}}]}
\end{equation}
where all the entries appearing in the product belong either to \(A_s\) or \(A_{B,\kappa},\) with at least one of them in the latter category. By independence, for any non-negative integers \((p_{ij})_{1 \leq i \leq j \leq n},\)
\[\mathbb{E}_*[\prod_{1 \leq i \leq j \leq n}{a^{p_{ij}}_{ij}}]=\prod_{1 \leq i \leq j \leq n, (i,j) \in S}{\mathbb{E}[a^{p_{ij}}_{ij} \hspace{0.05cm}| \hspace{0.05cm} \kappa \sqrt{n}<|a_{ij}| \leq M \sqrt{n}]} \cdot \prod_{1 \leq i \leq j \leq n, (i,j) \not \in S}{\mathbb{E}[a^{p_{ij}}_{ij}\chi_{|a_{ij}| \leq n^{\delta}} \hspace{0.05cm}| \hspace{0.05cm} |a_{ij}| \leq \kappa \sqrt{n}]}.\]
By symmetry, if some \(p_{ij}\) is odd, then the expectation is zero; else,
\begin{equation}\label{sub}
    \mathbb{E}_*[\prod_{1 \leq i \leq j \leq n}{a^{p_{ij}}_{ij}}] \leq c(\kappa,c)M^{\sum_{(i,j) \in S}{p_{ij}}} \cdot  \prod_{1 \leq i \leq j \leq n, (i,j) \not \in S}{\mathbb{E}[a^{p_{ij}}_{ij}\chi_{|a_{ij}| \leq n^{\delta}}]}.
\end{equation}
since \(\mathbb{P}(|a_{11}| \leq \kappa \sqrt{n}) \geq 1-2c(\kappa \sqrt{n})^{-4}.\) In other words, conditional moments can be replaced by unconditional ones for entries belonging to \(A_s,\) and by powers of \(M\) for entries belonging to \(A_{B,\kappa},\) at a cost of a multiplicative factor \(c(\kappa,c).\) This observation is used when bounding the terms on the right-hand side of (\ref{tracemod}).
\par
Keeping the terminology introduced in subsection \ref{method}, the above paragraph entails only even cycles contribute in (\ref{tracemod}). For such \(\mathbf{i},\) let \(\mathbf{i}',\mathbf{i}''\) be the strings of \(2p\) elements such that for \(0 \leq t \leq 2p-1,\) if \(a_{i_t i_{t+1}}\) belongs to \(A_s,\) then \(\mathbf{i}'_t=(i_t,i_{t+1}),\mathbf{i}''_t=\emptyset;\) else,  \(\mathbf{i}'_t=\emptyset, \mathbf{i}''_t=(i_t,i_{t+1}),\) and adopt \(\mathbf{i}=(\mathbf{i}',\mathbf{i}'')\) as a shorthand for this decomposition. Put differently, \(\mathbf{i}',\mathbf{i}''\) record the edges of \(\mathbf{i}\) belonging to \(A_s\) and \(A_{B,\kappa},\) respectively: moreover, by ignoring the empty set entries in these sequences, they can be naturally seen as subgraphs of \(\mathbf{i},\) an interpretation implicitly assumed henceforth. An important observation is that \(\mathbf{i}, \mathbf{i}', \mathbf{i}''\) share the property underlying even cycles: any undirected edge appears in each of them an even number of times (since no entry belongs to both \(A_s\) and \(A_{B,\kappa}\)). Steps \(1'-5'\) below consider the contributions of cycles \(\mathbf{i}\) for \(\mathbf{i}''\) fixed, while step \(6'\) sums them over all such directed graphs.
\par
Proceed with the first five steps: since \(\mathbf{i}''\) is fixed at this stage (denote its length by \(2l\)), the second summation in (\ref{tracemod}) is over \(\mathbf{i}'\) with \(\mathbf{i}=(\mathbf{i}',\mathbf{i}''),\) and as in the classical case, a change of summation is employed: from \(\mathbf{i}'\) to tuples \((n'_1,n'_2,\hspace{0.05cm} ...\hspace{0.05cm} ,n'_{p-l})\) with \(\sum_{1 \leq k \leq p-l}{kn'_k}=p-l.\) For any even cycle \(\mathbf{i},\) let \(N'_{\mathbf{i}}(k)\) be the set of vertices of \(\mathbf{i}\) appearing as right endpoints of marked edges of \(\mathbf{i}'\) exactly \(k\) times, and \(n'_k:=|N'_{\mathbf{i}}(k)|\) for \(1 \leq k \leq p-l\) (since \(\mathbf{i}'\) and \(\mathbf{i}''\) share no undirected edge, marking them either separately or jointly in \(\mathbf{i}\) leads to the same configuration of marked edges). In what follows, \(\mathbf{i}=(\mathbf{i}',\mathbf{i}'')\) is an even cycle with \((n'_1,n'_2,\hspace{0.05cm} ...\hspace{0.05cm} ,n'_{p-l})\) fixed and \(1 \leq l \leq p-1.\) Although steps \(1-5\) do not generally hold when \((n_1,n_2,\hspace{0.05cm} ...\hspace{0.05cm} ,n_p)\) is replaced by \((n'_1,n'_2,\hspace{0.05cm} ...\hspace{0.05cm} ,n'_{p-l}),\) they can be modified and still yield useful bounds.
\par
\underline{Step \(1'.\)} Map the marked edges of \(\mathbf{i}'\) to a Dyck path of length \(2p-2l.\) The number of such paths is at most \(C_{p-l}=\frac{1}{p-l+1}\binom{2p-2l}{p-l}.\)
\par
\underline{Step \(2'.\)} Select the order of the marked vertices in \(\mathbf{i}':\) the number of possibilities is at most
\[\frac{(p-l)!}{\prod_{1 \leq k \leq p-l}{(k!)^{n'_k}}} \cdot \frac{1}{\prod_{1 \leq k \leq p-l}{n'_k!}}.\]
\par
\underline{Step \(3'.\)} Choose the distinct vertices of \(\mathbf{i}',\) \(V(\mathbf{i}'),\) one at a time by reading its edges in order: the number of possibilities is at most 
\[2m \cdot n^{\sum_{1 \leq k \leq p-l}{n'_k}}.\] 
Each vertex of \(\mathbf{i}'\) is \(i_0,\) marked at least once, or some endpoint of an edge in \(\mathbf{i}'';\) hence, only the first two categories, whose union has size at most \(1+\sum_{1 \leq k \leq p-l}{n'_k},\) are yet to be chosen. Since \(l>0,\) let \(v=i_t\) be the first vertex in \(\mathbf{i}\) appearing also in \(\mathbf{i}''\) (i.e., \(\mathbf{i}''\) contains some edge adjacent to \(v,\) and \(t\) is minimal). If \(v=i_0,\) then it can be chosen in at most \(2m\) ways (\(uv \in \mathbf{i}''\) yields \((\min{(u,v)},\max{(u,v)}) \in S,\) and \(|S|\leq m\)), and the desired bound follows. Otherwise, \((u,v):=(i_{t-1},i_t)\) is marked in \(\mathbf{i}'\) (by the definitions of \(t\) and \(v,\) this edge belongs to \(\mathbf{i}'\) and contains the first apparition of \(v\) in \(\mathbf{i}\)), and so there are at most \(2m  \cdot n^{(1+\sum_{1 \leq k \leq p-l}{n'_k})-1}\) possibilities (\(v\) is both an element of a fixed set of size at most \(2m,\) and of \(N'_{\mathbf{i}}(k),\) for some \(1 \leq k \leq p-l\)). 
\par
\underline{Step \(4'.\)} Choose the remaining vertices of \(\mathbf{i}'\) among the ones selected in step \(3'.\) At this stage, \((i_0,i_1)\) is fully determined since it is either in \(\mathbf{i}''\) or marked in \(\mathbf{i}'.\) The same rationale as in step \(4\) shows only the right endpoints of the unmarked edges of \(\mathbf{i}'\) have yet to be chosen, which can be done in at most \(((2l+2)!)^{4l}\prod_{2 \leq k \leq p-l}{(2k+2l)^{kn'_k}}\) ways: 

\begin{lemma}\label{lemma1'}
If \(v \in N'_{\mathbf{i}}(k),\) then the number of unmarked edges of the form \((v,u)\) is at most
\(\begin{cases} 
2k+2l, & k>1\\
1, & k=1\\
2l+2, & v \in \mathcal{E}(\mathbf{i})
\end{cases},
\)
where \(|\mathcal{E}(\mathbf{i})| \leq 4l.\)
\end{lemma}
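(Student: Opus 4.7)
The plan is to extend the argument of Lemma~\ref{lemma1} while tracking how the edges of $\mathbf{i}''$ disrupt the pairing used there. First I would define $\mathcal{E}(\mathbf{i})$ to be the set of vertices $v$ that appear as an endpoint of some directed edge of $\mathbf{i}''$. Since $\mathbf{i}''$ has $2l$ directed edges, there are at most $4l$ such endpoints, which gives $|\mathcal{E}(\mathbf{i})| \leq 4l$.

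For $v \in N'_{\mathbf{i}}(k)$ with $v \notin \mathcal{E}(\mathbf{i})$, every edge of $\mathbf{i}$ adjacent to $v$ lies in $\mathbf{i}'$, so the marking of edges adjacent to $v$ in $\mathbf{i}'$ agrees with the marking viewed inside $\mathbf{i}$. The pairing argument of Lemma~\ref{lemma1} then applies essentially verbatim to the visits at $v$: for $k>1$ one obtains at most $2k \leq 2k+2l$ unmarked $\mathbf{i}'$-edges of the form $(v,u)$, and for $k=1$ one obtains the stronger bound $1$. This covers the first two cases of the statement.

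The delicate case is $v \in \mathcal{E}(\mathbf{i})$. Let $e(v) \leq 2l$ be the number of directed edges of $\mathbf{i}''$ incident to $v$. I would adapt the proof of Lemma~\ref{lemma1} by keeping the sets $S_j$ of marked $\mathbf{i}'$-edges adjacent to $v$ with index less than $n_j$ whose unmarked counterparts have index at least $n_j$, but now each sub-interval between consecutive marked $\mathbf{i}'$-edges $(t_j,v),(t_{j+1},v)$ may contain $\mathbf{i}''$-edges incident to $v$. The crucial claim to establish is that each such $\mathbf{i}''$-adjacency loosens the original recursion by at most one, yielding $a_{j+1} \leq a_j + 2 + e_j - u_j$, where $e_j$ counts $\mathbf{i}''$-edges at $v$ with index in $[n_j,n_{j+1}]$. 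Telescoping and using $\sum_j e_j = e(v) \leq 2l$ gives $\sum_j u_j \leq 2k + 2l$, which is stronger than $2l+2$ precisely when $k \geq 1$; so the bound $2l+2$ follows in particular from the $k=1$ specialization (a single interval, giving $u_1 \leq 2 + e(v)$).

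The main obstacle will be justifying the modified recursion when $\mathbf{i}''$-edges are interleaved with the $\mathbf{i}'$-structure at $v$. This requires revisiting the alternating $(v,s_i)$, $(\tilde{s}_i,v)$ decomposition of Lemma~\ref{lemma1} and verifying that inserting an $\mathbf{i}''$-edge incident to $v$ at any position either pairs trivially within $\mathbf{i}''$ (contributing nothing to the $\mathbf{i}'$-counting) or shifts the balance between $S_j$ and $S_{j+1}$ by exactly one unit. The boundary anomaly $v = i_0$, which in Lemma~\ref{lemma1} produced a harmless $+1$, has to be re-examined in this enlarged framework, but the correction is absorbed into the stated bounds. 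Once the refined recursion is in hand, the three cases follow by the same telescoping used to close Lemma~\ref{lemma1}.
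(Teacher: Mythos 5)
Your setup (defining \(\mathcal{E}(\mathbf{i})\) as the set of endpoints of \(\mathbf{i}''\)-edges, the bound \(|\mathcal{E}(\mathbf{i})| \leq 4l\), and the verbatim application of Lemma~\ref{lemma1} when \(v \notin \mathcal{E}(\mathbf{i})\)) is correct and matches the paper. The problem is the case \(v \in \mathcal{E}(\mathbf{i})\), which is the entire content of the lemma: there you assert the modified recursion \(a_{j+1} \leq a_j + 2 + e_j - u_j\) and then yourself identify its justification as ``the main obstacle.'' As written, the proposal therefore does not close. Worse, the conjectured form of the recursion is not the one a careful adaptation most naturally produces: what disrupts the Lemma~\ref{lemma1} bookkeeping at \(v\) is not every directed incidence of an \(\mathbf{i}''\)-edge at \(v\) (your \(e(v) \leq 2l\)) but the \emph{marked} \(\mathbf{i}''\)-edges with right endpoint \(v\), each of which splits an interval \([n_j,n_{j+1}]\) into two and costs an extra \(+2\); there are at most \(l\) of these, so the totals coincide numerically, but your ``\(+1\) per incidence'' claim would need its own (nontrivial) proof, and intermediate assertions such as \(u_1 \leq 2 + e(v)\) for \(k=1\) are not obviously correct as stated.

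All of this can be bypassed. Since \(\mathbf{i}'\) and \(\mathbf{i}''\) share no undirected edge, marking them jointly in \(\mathbf{i}\) or separately gives the same configuration, so \(v \in N'_{\mathbf{i}}(k)\) forces \(v \in N_{\mathbf{i}}(k')\) for some \(k \leq k' \leq k + l\) (\(\mathbf{i}''\) contributes at most \(l\) marked edges in total). The unmarked \(\mathbf{i}'\)-edges of the form \((v,u)\) are a subset of the unmarked \(\mathbf{i}\)-edges of that form, so Lemma~\ref{lemma1} applied to the full cycle \(\mathbf{i}\) immediately gives the bound \(2k' \leq 2k + 2l\) for \(k \geq 1\), and \(2(1+l) = 2l+2\) when \(k = 1\) and \(v \in \mathcal{E}(\mathbf{i})\); when \(k=1\) and \(v \notin \mathcal{E}(\mathbf{i})\) the original \(k=1\) argument applies unchanged and gives \(1\). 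This two-line reduction is the paper's proof, and I recommend you replace the perturbed-recursion argument with it.
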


\begin{proof}
If \(v \in N'_{\mathbf{i}}(k),\) then \(v \in N_{\mathbf{i}}(k')\) with \(k \leq k' \leq k+l\) (\(\mathbf{i''}\) contains \(l\) marked edges). Lemma~\ref{lemma1} then gives the result above for \(k \geq 1.\) For \(k=1,\) there is at most one possibility unless there exists an edge in \(\mathbf{i}''\) containing \(v\) (else, the proof of Lemma~\ref{lemma1} for this case is still valid): let \(\mathcal{E}(\mathbf{i})\) be the set of such vertices. For \(v \in \mathcal{E}(\mathbf{i}),\) there are at most \(2(l+1)\) such edges, and \(|\mathcal{E}(\mathbf{i})| \leq 2 \cdot 2l=4l\) (\(\mathbf{i}''\) contains \(2l\) edges).
\end{proof}

\par
\underline{Step \(5'.\)} Let
\(E'=|\{e \in \mathbf{i}': k(e)>2\}|.\) For \(a=a_{11}\chi_{|a_{11}| \leq n^{\delta}}, q \in \mathbb{N}, q \geq 2,\)
\[\mathbb{E}[a^2] \leq 1, \hspace{0.2cm} \mathbb{E}[a^{2q}] \leq c(\delta,c)\log{n} \cdot n^{\delta(2q-4)}:=L(n)n^{\delta(2q-4)},\]  
and so
\[\mathbb{E}[a_{\mathbf{i}'}] \leq L(n)^{(p-l)/2}\prod_{e \in \mathbf{i}', k(e)>2}{n^{\delta(2k(e)-4)}}=L(n)^{(p-l)/2}n^{2\delta(\sum_{e \in \mathbf{i}', k(e)>2}{k(e)}-2E')}\]
as there are at most \((p-l)/2\) pairwise distinct undirected edges in \(\mathbf{i}',\) each appearing at least four times in \(\mathbf{i}'.\) Because every edge \(e=uv\) with \(k(e)>2\) has either \(u \in N'_{\mathbf{i}}(k)\) or \(v \in N'_{\mathbf{i}}(k)\) for some \(k \geq 2,\)
\[\sum_{e \in \mathbf{i}', k(e)>2}{k(e)} \leq E'+\sum_{k \geq 2}{kn'_k},\]
providing
\[\mathbb{E}[a_{\mathbf{i}'}] \leq L(n)^{(p-l)/2}n^{2\delta(\sum_{k \geq 2}{kn'_k}-E')} \leq L(n)^{(p-l)/2}n^{2\delta\sum_{k \geq 2}{kn'_k}}.\]
Furthermore, an overall saving of some power of \(n\) is possible unless \(\mathbf{i}\) has a very special form.

\begin{lemma}\label{goodcycles}
For any even cycle \(\mathbf{i}\) with \(l<p,\) at least one of the following occurs:
\par
\((I)\) a factor of \(n^{2\delta}\) can be saved in step \(5':\)
\[\mathbb{E}[a_{\mathbf{i}'}] \leq L(n)^{(p-l)/2}n^{2\delta(\sum_{k \geq 2}{kn'_k}-1)},\]
\par
\((II)\) a factor of \(n/(2m)\) can be saved in step \(3':\)
\[(2m)^2 \cdot n^{\sum_{1 \leq k \leq p-l}{n'_k}-1},\]
\par
\((III)\) \(n'_1=p-l,\) \(i_0\) is unmarked in \(\mathbf{i}',\) \(\mathbf{i''}\) contains a unique undirected edge \(vw\) with \(v \in N'_{\mathbf{i}}(1) \cup \{i_0\}\) and \(w \ne i_0\) unmarked in \(\mathbf{i}'.\) 
\end{lemma}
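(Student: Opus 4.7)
My plan is a case analysis: I assume that an even cycle $\mathbf{i}$ with $l<p$ fails both (I) and (II), and show that (III) must then hold. The argument tracks two independent sources of potential slack --- one in the expectation bound of step~$5'$, the other in the vertex-counting of step~$3'$ --- and pins down the rigid configuration that remains once both are saturated.

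First, consequences of failing~(I): the derivation in step~$5'$ gave \(\mathbb{E}[a_{\mathbf{i}'}] \leq L(n)^{(p-l)/2}n^{2\delta(\sum_{k\ge 2}kn'_k - E')}\) via $\sum_{e\in\mathbf{i}',k(e)>2}k(e)\leq E'+\sum_{k\ge 2}kn'_k$. Since assertion~(I) replaces $\sum_{k\ge 2}kn'_k - E'$ by $\sum_{k\ge 2}kn'_k - 1$ in the exponent, it holds whenever $E'\ge 1$; when $E'=0$ the bound sharpens to $L(n)^{(p-l)/2}$, which is itself $\leq L(n)^{(p-l)/2}n^{2\delta(\sum_{k\ge 2}kn'_k-1)}$ as soon as $\sum_{k\ge 2}kn'_k\ge 1$. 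Hence failure of~(I) forces $E'=0$ together with $\sum_{k\ge 2}kn'_k=0$, i.e., every undirected edge of $\mathbf{i}'$ appears exactly twice and $n'_1 = p-l$ --- the first assertion of~(III).

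Second, consequences of failing~(II): the step~$3'$ bound $2m\cdot n^{\sum n'_k}$ already absorbs one ``doubly-constrained'' vertex --- a vertex of $V(\mathbf{i}')\cap V(\mathbf{i}'')$ that is also in $\{i_0\}\cup N'_{\mathbf{i}}(\ge 1)$, contributing $\leq 2m$ label choices rather than $n$. Case~(II)'s sharper bound $(2m)^2\cdot n^{\sum n'_k-1}$ corresponds to finding a second such vertex, so failure of~(II) gives $|(\{i_0\}\cup N'_{\mathbf{i}}(\ge 1))\cap V(\mathbf{i}'')|\leq 1$. On the other hand, the existence argument used to extract the first constrained vertex in step~$3'$ shows this intersection is non-empty; hence it has size exactly one, and the unique vertex $v$ lies in $N'_{\mathbf{i}}(1)\cup\{i_0\}$ since $n'_1=p-l$.

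The uniqueness of $\mathbf{i}''$'s undirected edge $vw$ then follows from property~$(c)$ of $\mathcal{S}$: $S$ is a matching, so any two distinct $S$-pairs are vertex-disjoint. Because $v$ is the unique anchor joining $\mathbf{i}''$ to $\mathbf{i}'$, each $\mathbf{i}''$-block starts and ends at $v$, and within a block consecutive edges share an endpoint; the matching property then forces all edges of the block to be repetitions of a single undirected edge incident to $v$, and since $v$ lies in at most one $S$-pair the same $vw$ recurs across every block. The conditions $w\ne i_0$ and $w$ unmarked in $\mathbf{i}'$ follow again from failure of~(II), since otherwise $w$ would be a second element of $(\{i_0\}\cup N'_{\mathbf{i}}(\ge 1))\cap V(\mathbf{i}'')$. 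The main obstacle I anticipate is the remaining assertion ``$i_0$ unmarked in $\mathbf{i}'$'', which is not immediately forced by the steps above; establishing it will likely require a local refinement in the spirit of the $v=i_0$ case of Lemma~\ref{lemma1}, exploiting the boundary behaviour of the cycle at $i_0$ to extract an additional saving that rules out configurations in which $i_0$ is marked.
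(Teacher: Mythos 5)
Your overall strategy coincides with the paper's: assume (I) and (II) both fail and show the rigid configuration (III) is forced. The first reduction is carried out correctly --- failure of (I) forces \(E'=0\) and \(\sum_{k\ge 2}kn'_k=0\), i.e.\ \(n'_1=p-l\) --- and your identification of (II) with the existence of two distinct vertices in \((\{i_0\}\cup\{\text{marked}\})\cap V(\mathbf{i}'')\), together with the matching property \((c)\) of \(\mathcal{S}\), correctly yields the uniqueness of the undirected edge \(vw\) and the conditions on \(w\). (Your phrase ``each \(\mathbf{i}''\)-block starts and ends at \(v\)'' is not literally true and is not needed; the clean statement is that every distinct undirected edge of \(\mathbf{i}''\) has an endpoint that is either \(i_0\) or marked in \(\mathbf{i}'\) --- via the first-appearance argument --- and distinct edges of a matching yield distinct such endpoints.)

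The genuine gap is the clause ``\(i_0\) is unmarked in \(\mathbf{i}'\)'', which you explicitly leave open and for which you propose the wrong kind of fix. No Lemma~\ref{lemma1}-style local analysis at \(i_0\) is required: the point is purely about the vertex count in step \(3'\). The bound there is \(2m\cdot n^{\sum_k n'_k}\) because the set of vertices still to be labeled is \(\{i_0\}\cup\bigcup_k N'_{\mathbf{i}}(k)\), of cardinality at most \(1+\sum_k n'_k\). If \(i_0\) is marked in \(\mathbf{i}'\), then \(i_0\in\bigcup_k N'_{\mathbf{i}}(k)\) and this union has cardinality at most \(\sum_k n'_k\); the ``\(+1\)'' is redundant, one full factor of \(n\) disappears, and the count drops to \(2m\cdot n^{\sum_k n'_k-1}\le (2m)^2\cdot n^{\sum_k n'_k-1}\), which is exactly (II). So ``\(i_0\) marked'' is not a configuration to be ruled out by extra combinatorics --- it is simply absorbed into case (II) by a one-line observation. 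With that observation inserted, your case analysis closes and matches the paper's proof.
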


\begin{proof}
Since
\[\mathbb{E}[a_{\mathbf{i}'}] \leq L(n)^{(p-l)/2}n^{2\delta(\sum_{k \geq 2}{kn'_k}-E')},\]
\((I)\) follows unless \(E'=0\) and \(n'_1=p-l:\) if \(E'>0,\) then it is clear; if \(E'=0, n'_1 \ne p-l,\) then \(\sum_{k \geq 2}{n'_k} \geq 1\) and the desired inequality holds too. What is left is the case \(E'=0, n'_1=p-l.\) 
\par
If \(\mathbf{i}''\) contains at least two distinct undirected edges, then \((II)\) holds. Suppose the condition is satisfied. Without loss of generality, assume each cluster of edges in \(\mathbf{i}''\) has size at most one (such a block is fully determined by its length and its first edge because any two distinct undirected edges of \(\mathbf{i}''\) share no vertex; therefore, if the first edge of the cluster is \((u,v),\) then its edges are \((u,v),(v,u),(u,v), \hspace{0.02cm} ... \hspace{0.02cm},\) which can be compressed to \(u, (u,v)\) for even, odd length, respectively, without affecting either the edges or the vertices of \(\mathbf{i}'\)). Each undirected edge \(e=vw\) of \(\mathbf{i}''\) is adjacent to either a marked vertex in \(\mathbf{i}'\) or \(i_0:\) take \(s=\min{\{0 \leq t \leq 2p, i_t \in \{v,w\}\}};\) if \(s=0,\) then \(e\) is adjacent to \(i_0;\) else, \(s>0,\) and \((i_{s-1},i_s) \in \mathbf{i}',\) because otherwise \(i_{s-1} \in \{v,w\},\) and is marked since \(i_s\) is the first apparition of a vertex in \(\mathbf{i}.\) This observation implies \((II).\)
\par
If \(\mathbf{i}''\) contains solely one undirected edge \(vw,\) then \((II)\) holds unless \((III)\) is satisfied. In this case, if \(i_0\) is marked in \(\mathbf{i}',\) then \((II)\) holds since \(\sum_{k \geq 1}{n'_k}\) can replace \(1+\sum_{k \geq 1}{n'_k}\) in step \(3'.\) If after compressing the clusters either both \(v\) and \(w\) are marked or one is \(i_0\) and the other marked, then again some saving is possible and \((II)\) ensues. Else, \((III)\) holds using \(\{v,w\} \cap (N'_{\mathbf{i}}(1) \cup \{i_0\}) \ne \emptyset.\)
\end{proof}

\par
In conclusion, merging steps \(3'\) and \(5'\) yields the overall contribution of cycles of type \((I)\) and \((II)\) is at most
\[2m \cdot n^{2\delta\sum_{k \geq 2}{kn'_k}+\sum_{k \geq 1}{n'_k}} L(n)^{(p-l)/2} \cdot (n^{-2\delta}+2m \cdot n^{-1}) \leq 4m \cdot n^{-\delta}\cdot n^{2\delta\sum_{k \geq 2}{kn'_k}+\sum_{k \geq 1}{n'_k}}.\]
for \(p \leq \sqrt{\log{n}}\) and \(n\) large enough.
\par
Putting steps \(1'-5'\) together, the computations at the end of the previous subsection can be used with the substitutions \(p \to p-l, n'_k \to n_k, 2k \to 2k+2l,\) and \(\delta=1/4-\delta_1, p \leq \sqrt{\log{n}}\) (\(8ep^2n^{-2\delta_1}\) replaces \(2epn^{-2\delta_1}\) from \(\frac{2k+2l}{ke^{-1}} \leq 4pe\)). Thus, this sum is upper bounded by
\begin{equation}\label{contri''}
     c(\kappa,c) \cdot 4m n^{-\delta} 2^{2p-2l}((2l+2)!)^{4l}e^{16} 
\end{equation}
for \(p \leq \sqrt{\log{n}}\) and  \(n\) sufficiently large.
\par
\underline{Step \(6'.\)} The conditional expectation coming from \(\mathbf{i}''\) is not larger than \(M^{2l},\) and there are at most \(\binom{2p}{2l}(2m)^{2l}\) such directed cycles for \(1 \leq l \leq p-1,\) and \(2|S| \leq 2m\) for \(l=p\) (cycles with all edges belonging to \(A_{B,\kappa}\) are fully determined by their first edge). Hence, using (\ref{contri''}) and (\ref{sub}), the overall contribution of cycles of types \((I)\) and \((II)\) is at most
\begin{equation}\label{typeIandII}
    2m \cdot M^{2p}+c(\kappa,c) \cdot 4m n^{-\delta}e^{16}\sum_{1 \leq l \leq p-1}{\binom{2p}{2l}(2m)^{2l}} \cdot 2^{2p-2l}((2l+2)!)^{4l}M^{2l}.
\end{equation}
\par
Consider now the cycles of type \((III):\) they generate a term less or equal than
\begin{equation}\label{typeIII}
   c(\kappa,c) \cdot 2m\sum_{1 \leq l \leq p-1}{M^{2l}\sum_{1 \leq t \leq p-l+1, 0 \leq l_0 \leq \min{(\frac{t}{2},\frac{l}{2})}}{\binom{l-l_0+t-1}{l-l_0}\binom{t}{2l_0}}b_{p-l,t}}.
\end{equation}
To see this, let \(vw\) be the edge appearing in a fixed \(\mathbf{i}''\) of length \(2l.\) Map each compressed cycle of type \((III)\) to an element of \(\mathbf{j} \in \mathcal{C}(p-l)\) by replacing each cluster \(\{v,w,(v,w),(w,v)\}\) by a new vertex \(\rho\) (note \(\rho\) is marked in \(\mathbf{j}\) exactly when \(v\) and \(w\) are in \(\mathbf{i}':\) hence, this procedure generates cycles in \(\mathcal{C}(p-l)\)). It is shown next that the preimage of any \(\mathbf{j} \in \mathcal{C}(p-l)\) with \(\rho\) a fixed vertex in it of multiplicity \(t,\) and the first cluster of \(\mathbf{i}\) containing \(v,\) has size in the following interval
\[[\sum_{0 \leq l_0 \leq \min{(\frac{t-1}{2},l)}}{\binom{l-l_0+t-1}{l-l_0}\binom{t-1}{2l_0}},\sum_{0 \leq l_0 \leq \min{(\frac{t}{2},l)}}{\binom{l-l_0+t-1}{l-l_0}\binom{t}{2l_0}}],\]
whereby (\ref{typeIII}) is fully justified since any element of \(\mathcal{C}(l)\) contains \(l\) pairwise distinct undirected edges (this ensues by induction and the recursive description of \(\mathcal{C}(l):\) see proof of (\ref{sizecl}) in subsection~\ref{closedformcombfunction}), which together with this mapping gives \(\mathbb{E}[a_{\mathbf{i}'}] \leq 1\) (although the upper bound suffices for (\ref{typeIII}), the lower bound comes into play in subsection~\ref{reverseineq}).
\par
Suppose first \(\rho \ne j_0,\) and let \((s_0,\rho,s_1), \hspace{0.05cm} ... \hspace{0.05cm}, (s_{2t-2},\rho,s_{2t-1})\) be the apparitions of \(\rho\) in \(\mathbf{j}\) in increasing order: \((s_2,\rho),(s_4,\rho), \hspace{0.05cm} ... \hspace{0.05cm},(s_{2t-2},\rho),(\rho,s_{2t-1})\) are unmarked and so \((\rho,s_1),(\rho,s_3), \hspace{0.05cm} ... \hspace{0.05cm} ,(\rho,s_{2t-3})\) are marked. Denote by \(2l_0\) the number of clusters of size two in \(\mathbf{i},\) an even cycle in the preimage of \(\mathbf{j}\) (this number is even because \(\mathbf{i}\) is): then \(0 \leq l_0 \leq \min{(\frac{t}{2},l)}\) (each contains an edge of \(\mathbf{i}''\)) and thus, the preimage has at most \(\binom{l-l_0+t-1}{l-l_0}\binom{t}{2l_0}\) elements since once the sizes of the clusters underlying \(\rho\) are fixed, by induction on \(1 \leq k \leq t,\) the \(k^{th}\) cluster \((s_{2k-2},\rho,s_{2k-1})\) is fully determined: for \(k=1,\) it is clear as \(\rho \in \{v,(v,w)\}\) and has fixed size; for \(k \geq 2,\) the counterpart of \((s_{2k-2},\rho)\) has already been decided, yielding the first vertex of the cluster, and its size dictates whether it has a second vertex or not, which is then fully determined by the first. Lastly, choosing tuples of even integers \((2x_1,2x_2, \hspace{0.05cm} ... \hspace{0.05cm}, 2x_t)\) with \(x_1+...+x_t=l-l_0, x_j \geq 0\) can be done in \(\binom{t+l-l_0-1}{l-l_0}\) ways (for \(t,l \in \mathbb{N},\) let \(\tilde{a}_{t,l}=|\{(2x_1,2x_2, \hspace{0.05cm} ...\hspace{0.05cm}, 2x_t): x_1+...+x_t=l, x_j \in \mathbb{Z}, x_j \geq 0\}|;\) clearly, \(\tilde{a}_{1,l}=1,\) and for \(t \geq 2,\) \(\tilde{a}_{t,l}=\tilde{a}_{t-1,l}+\tilde{a}_{t,l-1}\) as \(x_1=0\) or \(x_1>0;\) this yields by induction on \(t+l, \tilde{a}_{t,l}=\binom{t+l-1}{l}\)). 
\par
Conversely, select \(2l_0\) elements out of \(\{1,2,\hspace{0.05cm} ... \hspace{0.05cm}, t-1\}\) with \(0 \leq l_0 \leq \min{(\frac{t-1}{2},l)}:\) call this set \(D.\) Then there is an even cycle \(\mathbf{i}\) in the preimage of \(\mathbf{j}\) with the \(d^{th}\) apparition of \(\rho\) replaced by two vertices for \(d \in D\) and by one if \(d \not \in D, d \ne t.\) The last cluster is fully determined by the previous ones, while for the rest there are two possibilities: traverse the clusters from left to right; for the \(k^{th},\) the first edge, \((s_{2k-2},\rho)\) is fixed (it is unmarked), while the second can be chosen in two ways because it is marked, generating two scenarios for \(\rho,\) in which it has size one and two, respectively. Thus, it is always possible to decide the size of any of its apparitions, and \(|D|\) even ensures the cycle obtained is also even. Lastly, the case \(\rho=j_0\) is analogous to \(\rho \ne j_0,\) the main difference being that the apparitions of \(\rho\) are \((\rho,s_1), (s_2,\rho,s_3), \hspace{0.05cm} .... \hspace{0.05cm}, (s_{2t-2},\rho,s_{2t-1}),(s_{2t},\rho).\) Finally, (\ref{trace}) ensues from (\ref{typeIandII}) and (\ref{typeIII}).

\subsection{Asymptotics of Large Conditional Moments}\label{closedformcombfunction}

This subsection justifies for \(M>0,\)
\begin{equation}\label{simplifiedf}
    \lim_{p \to \infty}{s(p,M)^{1/2p}}=\begin{cases}
    2, & M \leq 1 \\
    \frac{M^2+1}{M}, & M \geq 1 \end{cases}.
\end{equation}
\par
Recall that
\[s(p,M)=\sum_{1 \leq l \leq p-1}{M^{2l}\sum_{1 \leq t \leq p-l+1, 0 \leq l_0 \leq \min{(\frac{t}{2},l)}}{\binom{l-l_0+t-1}{l-l_0}\binom{t}{2l_0}}b_{p-l,t}}.\]
For \(t \geq l+2\) \(b_{l,t}=0\) (any cycle in \(\mathcal{C}(l)\) contains \(2l+1\) vertices, out of which at least \(l+1\) are pairwise distinct: \(i_0\) and the marked vertices), and \(b_{l,l+1}=1:\) the sole element of \(\mathcal{C}(l)\) with a vertex repeated \(l+1\) times is \((u_0,u_1,u_0, \hspace{0.05cm} ... \hspace{0.05cm}, u_0, u_l, u_0)\) with \(u_0, u_1,\hspace{0.05cm} ... \hspace{0.05cm} , u_l\) pairwise distinct (\(l\) vertices have multiplicity \(1,\) and one has multiplicity \(l+1;\) since the first vertex appears at least twice, its multiplicity is \(l+1,\) and all the other vertices show up exactly once, yielding a unique cycle up to isomorphism).
\par
(\ref{simplifiedf}) is concluded in two phases, first, computing the asymptotic behavior of \((b_{l,t}):\)
\begin{equation}\label{bsizes}
    \frac{1}{4l} \cdot \binom{2l+1-t}{l} \leq b_{l,t} \leq  (l+1)^{120} \cdot \binom{2l+1-t}{l}
\end{equation}
for \(1 \leq t \leq l+1,\) and second, using binomial proxies for \(b_{l,t}\) in \(s(p,M)\) (any polynomial factor becomes negligible when \(p \to \infty\)).
\par
Proceed with understanding the sizes of \((b_{l,t})_{1 \leq l \leq t \leq l+1},\) for which two recursions, describing the sequence itself and \((\mathcal{C}(l))_{l \geq 1},\) respectively, are essential. Observe that
\begin{equation}\label{sizecl}
    |\mathcal{C}(l)|=C_l:
\end{equation}
on the one hand, steps \(1, 2, 4\) from subsection~\ref{method} yield \(|\mathcal{C}(l)| \leq C_l.\) On the other hand, recall the recursive characterization of the Catalan numbers: 
\[C_{l+1}=\sum_{0 \leq a \leq l}{C_aC_{l-a}}, \hspace{0.2cm} C_0=C_1=1.\]
Since \(|\mathcal{C}(1)|=1,\) justifying
\[|\mathcal{C}(l+1)| \geq 2|\mathcal{C}(l)|+\sum_{1 \leq a \leq l-1}{|\mathcal{C}(a)| \cdot |\mathcal{C}(l-a)|}\]
yields by induction \(|\mathcal{C}(l)| \geq C_l.\) To do so, construct three types of cycles among the elements of \(\mathcal{C}(l+1):\)
\par
\((i)\) \(\mathbf{i}=(v_0,v_1, \hspace{0.05cm} ... \hspace{0.05cm}, v_{2l},v_0) \in \mathcal{C}(l)\) with an extra loop at \(v_0: (v_0,u,v_0,v_1,\hspace{0.05cm} ... \hspace{0.05cm}, v_{2l},v_0)\) and \(u\) new (i.e., not among the vertices of \(\mathbf{i}\));
\par
\((ii)\) \(\mathbf{i}=(v_0,v_1, \hspace{0.05cm} ... \hspace{0.05cm}, v_{2l},v_0) \in \mathcal{C}(l)\) with an extra loop at \(v_1: (v_0,v_1,u,v_1,v_2,\hspace{0.05cm} ... \hspace{0.05cm}, v_{2l},v_0)\) and \(u\) new;
\par
\((iii)\) \((u_0,u_1,u_2,S_1,u_2,u_1,S_2)\) with \((u_2,S_1,u_2) \in \mathcal{C}(a), (u_0,u_1,S_2) \in \mathcal{C}(l-a),\) no vertex appearing in both, \(u_0,u_1,u_2\) pairwise distinct, and \(1 \leq a \leq l-1. \newline\)
Clearly, these three families are pairwise disjoint (by considering the second apparitions of the first three vertices of such cycles), and their union is a subset of \(\mathcal{C}(l+1)\) of size 
\[|\mathcal{C}(l)|+|\mathcal{C}(l)|+\sum_{1 \leq a \leq l-1}{|\mathcal{C}(a)| \cdot |\mathcal{C}(l-a)|}.\]
The proof of (\ref{sizecl}) is thus complete, and its crucial by-product is: 
\begin{center}
     \(\mathcal{C}(l+1)\) consists of three components: \((i), (ii),\) and \((iii).\)
\end{center}
\par
Returning to (\ref{bsizes}), (\ref{sizecl}) yields the desired chain of inequalities for \(t=1:\)
\[C_{l-1}=|\{(v_0,u,v_0,v_1,\hspace{0.05cm} ... \hspace{0.05cm}, v_{2l},v_0), (v_0,v_1, \hspace{0.05cm} ... \hspace{0.05cm}, v_{2l},v_0) \in \mathcal{C}(l-1)\}|
\leq b_{l,1} \leq \sum_{1 \leq t \leq l+1}{b_{l,t}}=(2l+1)C_l,\]
from which
\[\frac{1}{4l-2} \cdot \binom{2l}{l} \leq b_{l,1} \leq \frac{2l+1}{l+1} \cdot \binom{2l}{l}.\]
\par
Let now \(2 \leq t \leq l+1,\) and denote by \(f_{l,t}\) the number of elements of \(\mathcal{C}(l)\) in which the first vertex appears exactly \(t\) times; similarly, \(s_{l,t}\) is the number of elements in \(\mathcal{C}(l)\) whose second vertex has multiplicity \(t.\)
The aforementioned description of \(\mathcal{C}(l+1)\) gives
\[b_{l+1,t}=(f_{l,t-1}+b_{l,t}-f_{l,t})+(s_{l,t-1}+b_{l,t}-s_{l,t})+\sum_{1 \leq a \leq l-1}{[C_{l-a}b_{a,t}+ C_{a}(b_{l-a,t}-s_{l-a,t}+s_{l-a,t-1})]}\]
since a vertex \(v\) has multiplicity \(t>1\) if and only if
\(\newline (i)\) its multiplicity in \(\mathbf{i}\) is \(\begin{cases} 
t-1, & v=v_0\\
t, & v \ne v_0
\end{cases};\)
\(\hspace{0.5cm} (ii)\) its multiplicity in \(\mathbf{i}\) is \(\begin{cases} 
t-1, & v=v_1\\
t, & v \ne v_1
\end{cases};\)
\(\newline (iii)\) \(v \in (u_2,S_1,u_2)\) has multiplicity \(t,\) or \(v \in (u_0,u_1,S_1)\) has multiplicity \(\begin{cases} 
t-1, & v=u_1\\
t, & v \ne u_1
\end{cases}.\newline\)
By rearranging terms,
\begin{equation}\label{brecursion}
    b_{l+1,t}=2\sum_{0 \leq a \leq l-1}{C_{a}b_{l-a,t}}+
    (f_{l,t-1}-f_{l,t})+(s_{l,t-1}-s_{l,t}) +\sum_{1 \leq a \leq l-1}{ C_{a}(s_{l-a,t-1}-s_{l-a,t})}.
\end{equation}

\begin{lemma}\label{lemmafands}
    For \(k \geq -1\) and \(l \geq k+1,\)
    \[f_{l,l-k}=\binom{l+k}{k+1}-\binom{l+k}{k}.\]
\end{lemma}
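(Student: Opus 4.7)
The plan is to derive a recursion for $f_{l,t}$ from the three-part decomposition of $\mathcal{C}(l+1)$ exhibited in the proof of (\ref{sizecl}), and then to solve it via generating functions. Tracking the multiplicity of the first vertex in each construction: in type $(i)$ the appended loop adds one extra appearance of $v_0$, so the multiplicity of the first vertex grows by one, contributing $f_{l,t-1}$ to $f_{l+1,t}$; in type $(ii)$ the loop sits at $v_1\neq v_0$, so the multiplicity of the first vertex is unchanged, contributing $f_{l,t}$; in type $(iii)$ the first vertex of the new cycle is $u_0$, and since the inductive blocks $(u_2,S_1,u_2)\in\mathcal{C}(a)$ and $(u_0,u_1,S_2)\in\mathcal{C}(l-a)$ share no vertex other than $u_1,u_2$, the multiplicity of $u_0$ in the length-$2(l+1)$ cycle equals its multiplicity in $(u_0,u_1,S_2)$, contributing $\sum_{a=1}^{l-1}C_a f_{l-a,t}$. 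Altogether
\[f_{l+1,t}=f_{l,t-1}+f_{l,t}+\sum_{a=1}^{l-1}C_a\,f_{l-a,t},\qquad l\geq1,\ t\geq2.\]

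Introduce $F_t(x):=\sum_{l\geq1}f_{l,t}x^l$ and $C(x)=\sum_{a\geq0}C_a x^a$. Multiplying the recursion by $x^{l+1}$, summing over $l\geq 1$, and collecting the last two terms via $xF_t(x)+x(C(x)-1)F_t(x)=xC(x)F_t(x)$ yield $F_t(x)-f_{1,t}\,x=xF_{t-1}(x)+xC(x)F_t(x)$. The identity $1-xC(x)=1/C(x)$ (a direct consequence of $C=1+xC^2$) rearranges this to $F_t(x)=xC(x)\bigl(F_{t-1}(x)+f_{1,t}\bigr)$. Since the unique cycle in $\mathcal{C}(1)$ has first-vertex multiplicity $2$, the initial data is $f_{1,2}=1$ and $f_{1,t}=0$ for $t\neq 2$; hence $F_2(x)=xC(x)$ and by induction $F_t(x)=(xC(x))^{t-1}$ for every $t\geq2$. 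The standard coefficient formula $[x^m]C(x)^d=\tfrac{d}{2m+d}\binom{2m+d}{m}$, obtained from Lagrange inversion applied to $C=1+xC^2$, then delivers
\[f_{l,t}=[x^{l-t+1}]C(x)^{t-1}=\frac{t-1}{2l-t+1}\binom{2l-t+1}{l-t+1}.\]
Setting $t=l-k$, both $\tfrac{l-k-1}{l+k+1}\binom{l+k+1}{k+1}$ and $\binom{l+k}{k+1}-\binom{l+k}{k}$ simplify to $\tfrac{(l-k-1)(l+k)!}{(k+1)!\,l!}$, which closes the argument.

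The main technical point is the bookkeeping in the first step, especially the assertion in type $(iii)$ that the multiplicity of $u_0$ in the compound cycle coincides with its multiplicity in the shorter block $(u_0,u_1,S_2)$; this relies on the vertex-disjointness of the two Catalan blocks (apart from $u_1,u_2$) already used to establish (\ref{sizecl}). The boundary cases should also be double-checked: $k=-1$ yields $f_{l,l+1}=1$, matching $\binom{l-1}{0}-\binom{l-1}{-1}=1$ under the convention $\binom{n}{-1}=0$; and $k=l-1$ yields $f_{l,1}=0$, consistent with $\binom{2l-1}{l}=\binom{2l-1}{l-1}$.
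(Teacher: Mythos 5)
Your argument is correct, and it splits naturally into two halves. The first half — extracting the recursion \(f_{l+1,t}=f_{l,t-1}+f_{l,t}+\sum_{1\le a\le l-1}C_af_{l-a,t}\) from the three-type decomposition of \(\mathcal{C}(l+1)\) — is exactly the paper's equation (\ref{recursionf}), and your bookkeeping of the first-vertex multiplicity in each type (in particular the vertex-disjointness argument for type \((iii)\)) matches the paper's. Where you genuinely diverge is in solving the recursion: the paper runs a double induction (on \(k\), then on \(l\)), which bottoms out in the binomial identity \(\binom{2k}{k-1}-\binom{2k}{k-2}=\sum_{1\le a\le k}C_a\bigl(\binom{2k-2a}{k-a}-\binom{2k-2a}{k-a-1}\bigr)\) verified by a separate Catalan computation; you instead pass to generating functions, use \(1-xC(x)=1/C(x)\) to get \(F_t(x)=(xC(x))^{t-1}\), and read off the closed form \(f_{l,t}=\frac{t-1}{2l-t+1}\binom{2l-t+1}{l-t+1}\) by the Catalan convolution formula. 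Your route is shorter and more conceptual — the identity \(F_t=(xC)^{t-1}\) makes transparent why ballot numbers appear, and it dispenses with the paper's nested inductions and auxiliary identity — at the cost of importing Lagrange inversion (or the standard convolution formula for powers of the Catalan series), whereas the paper's argument is entirely elementary. The final algebraic reconciliation \(\frac{l-k-1}{l+k+1}\binom{l+k+1}{k+1}=\binom{l+k}{k+1}-\binom{l+k}{k}\) and your boundary checks at \(k=-1\) and \(t=1\) are all correct.
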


\begin{proof}
Reasoning as above, the description of \(\mathcal{C}(l+1)\) gives for \(2 \leq t \leq l+1,\)
\begin{equation}\label{recursionf}
    f_{l+1,t}=f_{l,t-1}+f_{l,t}+\sum_{1 \leq a \leq l-1}{C_af_{l-a,t}},
\end{equation}
where \(f_{l,t}=0\) if \(t>l+1.\) Proceed by induction on \(k \geq -1\) for \(l \geq k+1.\) The base case \(k=-1\) is immediate from \(f_{l,l+1}=b_{l,l+1}=1.\) Let \(k \geq 0\) be fixed.
\par
Use induction now on \(l \geq k+1:\) the base case \(l=k+1\) is clear inasmuch as the first vertex of any even cycle has multiplicity at least two. Take next \(l \geq k+2:\) since
\begin{equation}\label{eqf}
    \binom{l+k+1}{k+1}-\binom{l+k+1}{k}=f_{l,l-k}+f_{l,l-k+1}+\binom{l+k-1}{k-1}-\binom{l+k-1}{k-2}
\end{equation}
from
\[\binom{l+k+1}{k+1}-\binom{l+k+1}{k}=\binom{l+k}{k+1}+\binom{l+k}{k}-\binom{l+k}{k}-\binom{l+k}{k-1}=\binom{l+k}{k+1}-\binom{l+k}{k-1}=\]
\[=f_{l,l-k}+\binom{l+k}{k}-\binom{l+k}{k-1}=f_{l,l-k}+f_{l,l-k+1}+\binom{l+k}{k}-\binom{l+k}{k-1}-\binom{l+k-1}{k}+\binom{l+k-1}{k-1}=\]
\[=f_{l,l-k}+f_{l,l-k+1}+\binom{l+k-1}{k-1}-\binom{l+k-1}{k-2},\]
in light of (\ref{recursionf}) rewritten for \(l \geq k+1\) as
\[f_{l+1,l-k+1}=f_{l,l-k}+f_{l,l-k+1}+\sum_{1 \leq a \leq k}{C_af_{l-a,l-k+1}},\]
it suffices to prove that for \(l \geq k+1,\)
\[A(l,k):=\binom{l+k-1}{k-1}-\binom{l+k-1}{k-2}=\sum_{1 \leq a \leq k}{C_a(\binom{l+k-1-2a}{k-a}-\binom{l+k-1-2a}{k-a-1})}:=B(l,k).\] 
Because
\[A(l,k)=A(l-1,k)+A(l-1,k-1), \hspace{0.2cm} B(l,k)=B(l-1,k)+B(l-1,k-1),\]
(in \(B(l,k)-B(l-1,k)\) the summation is over \(1 \leq a \leq k-1\) since the coefficient of \(C_{k}\) in \(B(l,k)\) is \(1\))
from \(\binom{n}{m}=\binom{n-1}{m}+\binom{n-1}{m-1},\) showing \(A(l,k)=B(l,k)\) for \(l=k+1\) is sufficient as induction on \(l-k \geq 1\) yields the desired identity (for \(d=l-k\) fixed, induction is used anew on \(l \geq k+1,\) whose base case \(l=d+1,k=1\) is immediate because \(f_{d+1,1}=0=\binom{2d+1}{d+1}-\binom{2d+1}{d}\)). In this situation, the desired result is
\[\binom{2k}{k-1}-\binom{2k}{k-2}=\sum_{1 \leq a \leq k}{C_a(\binom{2k-2a}{k-a}-\binom{2k-2a}{k-a-1})}.\]
Note the right-hand side term is
\[\sum_{1 \leq a \leq k}{C_aC_{k-a}}=C_{k+1}-C_k=C_k \cdot \frac{3k}{k+2},\]
and
\[\binom{2k}{k-1}-\binom{2k}{k-2}= C_k \cdot \frac{3k}{k+2}=\binom{2k}{k} \cdot \frac{3k}{(k+2)(k+1)}\]
since simplifying this equation by \(\frac{k!(k-1)!}{(2k)!}\) turns it equivalent to
\[\frac{1}{k+1}-\frac{k-1}{(k+1)(k+2)}=\frac{1}{k} \cdot \frac{3k}{(k+2)(k+1)}=\frac{3}{(k+1)(k+2)}.\]
\end{proof}

\par
Notice \(s\) and \(f\) are comparable, i.e., 
\begin{equation}\label{squeezesbetweenf}
    f_{l-1,t} \leq s_{l,t} \leq f_{l,t+1}:
\end{equation}
if \(\mathbf{i} \in \mathcal{C}(l-1)\) has the first vertex of multiplicity \(t,\) then \((u_0,\mathbf{i},u_0) \in \mathcal{C}(l)\) has the second vertex of multiplicity \(t,\) implying the first inequality. The upper bound ensues by induction on \(t+l\) from (\ref{recursionf}): using the description of \(\mathcal{C}(l+1),\) \(s_{l,1}=C_{l-1}=f_{l,2},\) and for \(2 \leq t \leq l+1,\) 
\[s_{l+1,t}=s_{l,t-1}+\sum_{1 \leq a \leq l-1}{C_as_{l-a,t}}.\]
\par
Now the growth of \((b_{l,t})_{2 \leq t \leq l+1}\) can be fully established.

\begin{lemma}\label{lemma10}
For \(2 \leq t \leq l+1,\) 
\[f_{l,t} \leq b_{l,t} \leq (l+1)^{120} \cdot f_{l,t}.\]
\end{lemma}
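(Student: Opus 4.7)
\par
The lower bound $b_{l,t} \geq f_{l,t}$ is immediate: every $\mathbf{i} \in \mathcal{C}(l)$ counted by $f_{l,t}$ contributes the vertex $i_0$ itself (of multiplicity $t$) to the tally $b_{l,t}$. For the upper bound my plan is to construct a cyclic re-rooting map that sends the ``excess'' pairs $(\mathbf{i}, v)$, with $v \ne i_0$ of multiplicity $t$, to cycles in $\mathcal{C}(l)$ whose first vertex has multiplicity $t+1$, thereby reducing the problem to comparing $f_{l,t+1}$ with $f_{l,t}$ via the closed form of Lemma~\ref{lemmafands}.

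\par
Concretely, for a pair $(\mathbf{i}, v)$ as above, let $k^\ast := \min\{k \geq 1 : i_k = v\}$ and define $\mathbf{j}_r := i_{(k^\ast + r) \bmod 2l}$ for $0 \leq r \leq 2l$. Via the standard bijection of $\mathcal{C}(l)$ with rooted plane trees on $l$ edges (the cycle is the depth-first contour walk, the first vertex is the root), this is exactly the re-rooting of the underlying tree at $v$, and hence produces another element of $\mathcal{C}(l)$. The first vertex of $\mathbf{j}$ is $v$, and since $v$ had $t$ positions in $\mathbf{i}$, all interior (as $v \ne i_0 = i_{2l}$), in $\mathbf{j}$ it now occupies positions $\{0, 2l\}$ together with the $t-1$ shifted interior positions, for a total multiplicity of $t+1$.

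\par
To control the fiber over a fixed target $\mathbf{j}$ whose first vertex has interior positions $0 < p_1 < \cdots < p_{t-1} < 2l$: any pre-image arises by rotating $\mathbf{j}$ backward by some $k_0 \in \{1, \ldots, 2l-1\} \setminus \{p_1, \ldots, p_{t-1}\}$, and the requirement that $2l - k_0$ be the \emph{first} post-$0$ occurrence of $v$ in $\mathbf{i}$ forces $k_0 > p_{t-1}$. Hence each fiber has at most $2l - 1 - p_{t-1} \leq 2l$ elements, yielding
\[b_{l,t} - f_{l,t} \leq 2l \cdot f_{l,t+1}.\]
Using $f_{l,t} = \binom{2l-t}{l-1} - \binom{2l-t}{l}$ from Lemma~\ref{lemmafands}, a direct algebraic manipulation gives $f_{l,t+1}/f_{l,t} = t(l-t+1)/[(t-1)(2l-t)]$, which is $\leq 1$ for $2 \leq t \leq l+1$. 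Thus $b_{l,t} \leq (2l+1) f_{l,t}$, well within the $(l+1)^{120} f_{l,t}$ bound claimed.

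\par
The main obstacle is verifying cleanly that cyclic re-rooting indeed sends $\mathcal{C}(l)$ into itself, i.e., preserves $n_1 = l$ and ``first vertex unmarked''. This is transparent from the plane-tree viewpoint, but should be spelled out combinatorially: the condition $n_1 = l$ in $\mathcal{C}(l)$ forces each undirected edge to be visited exactly twice (otherwise some vertex would be marked more than once), and under this condition every return to the new first vertex $v$ in $\mathbf{j}$ is the second (hence unmarked) visit of the corresponding edge, while each other vertex is marked exactly once by its first-visited edge.
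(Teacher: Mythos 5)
Your proof is correct, and it takes a genuinely different route from the paper's. The paper derives the recursion (\ref{brecursion}) for \(b_{l+1,t}\) from the three-type decomposition of \(\mathcal{C}(l+1),\) subtracts the \(f\)-recursion (\ref{recursionf}) to reach \(b_{l+1,t}-f_{l+1,t} \leq 2\sum_{a}C_a(b_{l-a,t}-f_{l-a,t})+2f_{l+1,t},\) and then closes an induction using sharp estimates on the ratios \(f_{l-a,t}/f_{l+1,t}\) (essentially \((1/4)^{a+1},\) matching the Catalan growth), which is where the large constant \((l+1)^{120}\) comes from. You instead exploit the fact that \(\mathcal{C}(l)\) is exactly the set of contour walks of rooted plane trees with \(l\) edges — the count \(b_{l,t}-f_{l,t}\) of non-root vertices of multiplicity \(t\) is pushed onto \(f_{l,t+1}\) by re-rooting at the first occurrence of \(v,\) the fiber of the re-rooting map is bounded by the number of admissible rotations \((\leq 2l),\) and the closed form of Lemma~\ref{lemmafands} gives \(f_{l,t+1}/f_{l,t}=t(l-t+1)/[(t-1)(2l-t)] \leq 1\) for \(t \geq 2\) (the difference of the cross products is \(l(t-2)\)). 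This yields the sharper and cleaner bound \(b_{l,t} \leq (2l+1)f_{l,t}\) with no recursion and no induction; since only polynomial-in-\(l\) comparability is needed for (\ref{bsizes}) and (\ref{simplifiedf}), either bound suffices. Your multiplicity bookkeeping (the new root gains one occurrence because positions \(0\) and \(2l\) are counted separately) and the fiber argument are both right. The one point to tighten is the parenthetical justification that every edge of a cycle in \(\mathcal{C}(l)\) is traversed exactly twice: the clean argument is the vertex/edge count — \(n_1=l\) plus ``first vertex unmarked'' force \(l+1\) distinct vertices, while \(\sum_e k(e)=l\) allows at most \(l\) distinct undirected edges, so connectivity forces exactly \(l\) edges, each with \(k(e)=1\) — rather than ``some vertex would be marked more than once,'' since an edge traversed four times can distribute its two marks over both endpoints. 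This is the same observation the paper records when noting that any element of \(\mathcal{C}(l)\) contains \(l\) pairwise distinct undirected edges, so it is a presentational fix, not a gap.
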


\begin{proof}
The lower bound is evident. Focus next on the second inequality. Recall (\ref{brecursion}):
\[b_{l+1,t}=2\sum_{0 \leq a \leq l-1}{C_{a}b_{l-a,t}}+
(f_{l,t-1}-f_{l,t})+(s_{l,t-1}-s_{l,t})+\sum_{1 \leq a \leq l-1}{ C_{a}(s_{l-a,t-1}-s_{l-a,t})}.\]
For the time being, let \(\Sigma_s=\sum_{1 \leq a \leq l-1}{ C_{a}(s_{l-a,t-1}-s_{l-a,t})}.\) Subtract \(f_{l+1,t}\) from both sides
\[b_{l+1,t}-f_{l+1,t}=2\sum_{0 \leq a \leq l-1}{C_{a}b_{l-a,t}}+
(f_{l,t-1}-f_{l,t})+(s_{l,t-1}-s_{l,t})+\Sigma_s-f_{l+1,t},\]
use (\ref{recursionf})
\[b_{l+1,t}-f_{l+1,t}=2\sum_{0 \leq a \leq l-1}{C_{a}b_{l-a,t}}
-2f_{l,t}-\sum_{1 \leq a \leq l-1}{C_af_{l-a,t}}+(s_{l,t-1}-s_{l,t})+\Sigma_s,\]
change \(b_{l-a,t}\) to \(b_{l-a,t}-f_{l-a,t}\)
\[b_{l+1,t}-f_{l+1,t}=2\sum_{0 \leq a \leq l-1}{C_{a}(b_{l-a,t}-f_{l-a,t})}
+\sum_{1 \leq a \leq l-1}{C_af_{l-a,t}}+(s_{l,t-1}-s_{l,t}) +\Sigma_s,\]
dispense with the second summation
\[b_{l+1,t}-f_{l+1,t}=2\sum_{0 \leq a \leq l-1}{C_{a}(b_{l-a,t}-f_{l-a,t})}
+(f_{l+1,t}-f_{l,t-1}-f_{l,t})+(s_{l,t-1}-s_{l,t})+\Sigma_s.\]
\par
Consider now \(\Sigma_s:\) (\ref{squeezesbetweenf}) implies
\[\sum_{1 \leq a \leq l-1}{C_{a}(s_{l-a,t-1}-s_{l-a,t})} \leq \sum_{1 \leq a \leq l-1}{C_{a}f_{l-a,t}} \leq f_{l+1,t},\]
whereby \(b_{l+1,t}-f_{l+1,t}\) is at most
\[2\sum_{0 \leq a \leq l-1}{C_{a}(b_{l-a,t}-f_{l-a,t})}+(f_{l+1,t}-f_{l,t}-f_{l,t-1})
+(f_{l,t}-f_{l-1,t-1})+f_{l+1,t},\]
from which
\begin{equation}\label{bfineq}
    b_{l+1,t}-f_{l+1,t} \leq 2\sum_{0 \leq a \leq l-1}{C_{a}(b_{l-a,t}-f_{l-a,t})}+2f_{l+1,t}.
\end{equation}
\par
Induction on \(k \geq -1\) entails
\[b_{l,l-k}-f_{l,l-k} \leq (l+1)^{120} \cdot f_{l,l-k}.\]
When \(k=-1,\) \(b_{l,l+1}-f_{l,l+1}=0.\) Let now \(k \geq 0\) be fixed, and suppose \(l \geq 4\) (if \(l \leq 3,\) then \(b_{l,l-k} \leq (2l+1)C_l<2^{120}\)). In light of (\ref{bfineq}) with \(t=l-k+1 \geq 2,\) it suffices to show
\[2\sum_{0 \leq a \leq l-1}{C_{a}(l-a+1)^{120}f_{l-a,t}}+2f_{l+1,t} \leq (l+2)^{120} \cdot f_{l+1,t}:\]
to see this, note that
\[\frac{f_{l,l-k+1}}{f_{l+1,l-k+1}}=\frac{k+1}{k} \cdot \frac{\binom{l+k-1}{k-1}}{\binom{l+k+1}{k}}=\frac{(k+1)(l+1)}{(l+k)(l+k+1)} \leq \frac{l+1}{(\sqrt{l-1}+\sqrt{l})^2} \leq \frac{1}{4}+\frac{2}{l},\]
employing \(\frac{x}{(x+\alpha)(x+\beta)} \leq \frac{1}{(\sqrt{\alpha}+\sqrt{\beta})^2}\) for \(x,\alpha,\beta>0.\) This inequality implies for \(0 \leq a \leq \frac{l}{2}-1\)
\[\frac{f_{l-a,l-k+1}}{f_{l+1,l-k+1}} \leq (\frac{1}{4}+\frac{2}{l})...(\frac{1}{4}+\frac{2}{l-a}) \leq (\frac{1}{4}+\frac{4}{l+2})^{a+1},\]
from which
\[2\sum_{0 \leq a \leq l/2-1}{C_{a}(l-a+1)^{120}f_{l-a,t}} \leq 2(l+1)^{120} \cdot f_{l+1,t} \sum_{0 \leq a \leq l/2-1}{(\frac{1}{4}+\frac{4}{l+2})^{a+1}} \leq\]
\[\leq 2(l+1)^{120} \cdot f_{l+1,t} \cdot \frac{1/4+4/(l+2)}{3/4-4/(l+2)}=2(l+1)^{120} \cdot f_{l+1,t} \cdot \frac{l+18}{3l-10},\]
while
\[2\sum_{l/2-1< a \leq l-1}{C_{a}(l-a+1)^{120}f_{l-a,t}} \leq 2(l/2+2)^{120} \cdot (f_{l+1,t}-f_{l,t-1}-f_{l,t}) \leq 2(l/2+2)^{120} \cdot f_{l+1,t}.\]
The claim follows from
\[2(\frac{l+1}{l+2})^{120} \cdot \frac{l+18}{3l-10}+(\frac{l+4}{2l+4})^{120} \cdot 2+\frac{2}{(l+2)^{120}} \leq 1:\]
the left-hand side is at most
\[(\frac{5}{6})^{120} \cdot 22+(\frac{2}{3})^{120} \cdot 2+\frac{2}{6^{120}} \leq 25 \cdot (\frac{5}{6})^{120} \leq \frac{25}{1+120 \cdot 1/5}=1.\]
\end{proof}

\par
To finalize (\ref{bsizes}), note the result is clear for \(t=l+1,\) and for \(t=l-k+1, 1 \leq k \leq l-1,\)
\[f_{l,l-k+1}=\frac{l-k}{k}\binom{l+k-1}{k-1}=\frac{l-k}{l+k}\binom{l+k}{l} \in [\frac{1}{2l}\binom{l+k}{l},\binom{l+k}{l}],\]
which in conjunction with Lemma~\ref{lemma10} entails the desired inequalities.
\par
Having completed (\ref{bsizes}), proceed with the last missing piece of this subsection, (\ref{simplifiedf}). Let \(x!:=(\lfloor{} x \rfloor)!\) for \(x \geq 0:\) Stirling's formula yields
\[C_1\sqrt{n} \cdot (\frac{n}{e})^n \leq n! \leq C_2\sqrt{n} \cdot (\frac{n}{e})^n\]
for universal constants \(C_1,C_2>0\) and all \(n \geq 0,\) from which
\[\lim_{p \to \infty}{s(p,M)^{1/p}}=\sup{M^{2x}\frac{(x-y+z)^{x-y+z}}{(x-y)^{x-y}(2y)^{2y}(z-2y)^{z-2y}} \cdot \frac{(2-2x-z)^{2-2x-z}}{(1-x)^{1-x}(1-x-z)^{1-x-z}}}\]
over \(0 \leq x \leq 1, 0 \leq z \leq 1-x, 0 \leq y \leq \min{(\frac{z}{2},x)}:\) take \(l=px,l_0=py,t=pz,\) and so
\[(M^{2l}\binom{l-l_0+t}{l-l_0}\binom{t}{2l_0}\binom{2p-2l-t}{p-l})^{1/p}=(M^{2px}\binom{px-py+pz}{pz}\binom{pz}{2py}\binom{2p-2px-pz}{p-px})^{1/p}=\]
\[=M^{2x}\frac{(x-y+z)^{x-y+z}}{(x-y)^{x-y}(2y)^{2y}(z-2y)^{z-2y}} \cdot \frac{(2-2x-z)^{2-2x-z}}{(1-x)^{1-x}(1-x-z)^{1-x-z}}+o(M)\]
with the last term tending to \(0\) as \(p \to \infty\) uniformly in \(x,y,z.\) (\ref{simplifiedf}) is a consequence of:

\begin{lemma}
For \(M>0\) fixed, let \(h:D \to (0,\infty),\)
\[h(x,y,z)=M^{2x} \cdot \frac{(x-y+z)^{x-y+z}}{(x-y)^{x-y}(2y)^{2y}(z-2y)^{z-2y}} \cdot \frac{(2-2x-z)^{2-2x-z}}{(1-x)^{1-x}(1-x-z)^{1-x-z}}\]
where \(D=\{(x,y,z) \in \mathbb{R}^3 :0 \leq x \leq 1, 0 \leq z \leq 1-x, 0 \leq y \leq \min{(\frac{z}{2},x)}\}\) and \(0^0:=1=\lim_{x \to 0+}{x^x}.\) Then
\[\sup_{(x,y,z) \in D}{h(x,y,z)}= \begin{cases}
    4, & M \leq 1, \\
    \frac{(M^2+1)^2}{M^2}, & M \geq 1 
    \end{cases}.\]
\end{lemma}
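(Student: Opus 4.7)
The plan is to locate the supremum by a Lagrangian analysis of the interior combined with a sweep of the boundary. Since $D$ is compact and $h$ extends continuously under the convention $0^0 = 1,$ the supremum is attained; I will show it equals $(M^2+1)^2/M^2$ at an explicit interior critical point for $M \geq 1,$ and equals $4$ at the corner $(0,0,0)$ for $M \leq 1.$

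Write $g := \log h$ and use $\phi(t) := t\log t$ with $\phi'(t) = \log t + 1.$ The $+1$ contributions cancel in each equation $\partial_\bullet g = 0,$ so the three critical-point conditions reduce to
\[\alpha\gamma^2 = (\alpha+\beta+\gamma)\beta^2, \quad (\alpha+\beta+\gamma)\,b = \gamma(a+b), \quad M^2(\alpha+\beta+\gamma)\,ab = \alpha(a+b)^2,\]
where $\alpha = x-y,\ \beta = 2y,\ \gamma = z-2y,\ a = 1-x,\ b = 1-x-z.$ Setting $\lambda := \beta/\alpha,$ the first equation factors as $\alpha(\gamma-\beta)(\gamma+\beta) = \beta^2(\gamma+\beta),$ giving $\gamma = \lambda(1+\lambda)\alpha$ and $\alpha+\beta+\gamma = (1+\lambda)^2\alpha;$ substituting into the second forces $b = \lambda a,$ and the third then collapses to $\lambda = 1/M^2.$ Combined with $x = \alpha(2+\lambda)/2$ and $a = 1-x,$ this pins down a unique critical point with $x^{*} = (M^2-1)/(M^2+1),$ lying in the open interior of $D$ precisely when $M > 1.$ To evaluate $h$ there, both entropy pieces collapse to a common form: with $K := (1+\lambda)\log(1+\lambda) - \lambda\log\lambda,$ a short computation shows the multinomial piece equals $\alpha(2+\lambda) K = 2x K$ and the binomial piece equals $a K,$ so $\log h = 2x\log M + (1+x) K$ using $2x + a = 1 + x;$ substituting $\lambda = 1/M^2$ and $x = (M^2-1)/(M^2+1)$ telescopes this to $\log((M^2+1)^2/M^2),$ matching the claim for $M \geq 1.$

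For the boundary, each face either sends one of $\alpha,\beta,\gamma,a,b$ to zero or fixes $x$ at an endpoint, reducing $h$ to a one- or two-variable optimization handled by the same Lagrangian method. The face $z = 0$ gives $h = 4(M/2)^{2x}$ with max $\max(4, M^2);$ the face $x = 0$ gives $h = (2-z)^{2-z}/(1-z)^{1-z},$ monotonically decreasing with max $4$ at $z = 0;$ the interior of the face $y = 0$ has a critical point at $z = x(1-x),\ x = (M^2-2)/(M^2-1)$ (valid only for $M \geq \sqrt{2}$) giving $\log h = 4\log M - \log(M^2-1);$ and the degenerate faces $y = x,\ y = z/2,\ z = 1-x$ (where $\alpha,\gamma,$ or $b$ vanishes) collapse $\mathcal{M}$ into a binomial or kill the $g_2$ piece, yielding analogous bounds. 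The arithmetic inequalities $(M^2+1)^2/M^2 \geq \max(4, M^2)$ (both equivalent to $M^2 + 1/M^2 \geq 2$) and $(M^2+1)^2/M^2 \geq M^4/(M^2-1)$ (equivalent to $M^4 \geq M^2 + 1,$ holding for $M \geq \sqrt{2}$) show that the interior critical value dominates for $M \geq 1;$ for $M \leq 1$ the formal interior solution has $\lambda \geq 1,$ so $b \geq a$ forces $z \leq 0,$ hence no interior critical point lies in $D$ and the boundary sweep gives $\sup h = 4$ attained at $(0,0,0).$ The main obstacle is the boundary sweep: although the degenerate faces introduce some casework, the same structural relation $\lambda = 1/M^2$ governs each restriction, and the arithmetic comparisons collected above close every subcase.
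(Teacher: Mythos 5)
Your overall strategy coincides with the paper's: both reduce the problem to a stationarity analysis of $\log h$ on the compact polytope $D$, and your three critical-point equations are exactly the paper's $\partial_y h=\partial_z h=\partial_x h=0$ rewritten in the variables $\alpha=x-y,\ \beta=2y,\ \gamma=z-2y,\ a=1-x,\ b=1-x-z$. Your treatment of the interior critical point is correct and in fact cleaner than the paper's: the substitution $\lambda=\beta/\alpha$ linearizes the system to $\gamma=\lambda(1+\lambda)\alpha$, $b=\lambda a$, $\lambda=1/M^2$, and the identity $\log h=2x\log M+(1+x)K$ with $K=(1+\lambda)\log(1+\lambda)-\lambda\log\lambda$ makes the evaluation at $x=(M^2-1)/(M^2+1)$ transparent, whereas the paper reaches the same point through the explicit parametrization $z=\frac{2x-2x^2}{x+1}$, $y=\frac{x-x^2}{x+3}$. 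I checked the algebra and the value $\log\bigl((M^2+1)^2/M^2\bigr)$ comes out correctly, as do the faces $z=0$, $x=0$, and the interior of $y=0$, together with the comparisons $\max(4,M^2)$ and $M^4/(M^2-1)$ against the claimed supremum.

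The gap is in the boundary sweep. You dispatch the faces $y=x$, $y=z/2$, and $z=1-x$ with the assertion that they yield ``analogous bounds'' closed by ``the arithmetic comparisons collected above,'' but this is not so: those faces produce \emph{new} candidate suprema that are not among the inequalities you listed and that each need their own verification. Concretely, on $\{z=1-x,\ \partial_y h=0\}$ the restricted optimization gives the value $\frac{(M+\sqrt{M^2+4})^2}{4}$, whose comparison with $\max\bigl(4,\frac{(M^2+1)^2}{M^2}\chi_{M>1}\bigr)$ requires a case split (at $M^2=3/2$ in the paper) and the inequality $M\sqrt{M^2+4}<M^2+2$; and the face $y=x$ splits into three subcases ($z=2x$, $z=1-x$, and the interior critical line $z=\frac{4x(1-x)}{x+1}$) producing the values $M\sqrt{\tfrac{1-3x}{1-x}}$, $\max\bigl(4,M^{2/3}(4/3)^{2/3}\bigr)$, and $M^2\frac{(1-3x)^2}{(1+x)^2}$, whose comparisons involve $M^{2/3}$ and are genuinely different from anything you recorded. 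This material is the bulk of the paper's Cases 2 and 3, and without it the claim that the boundary contributes at most $\max(4,(M^2+1)^2/M^2)$ --- and hence the entire $M\leq 1$ case, where the supremum lives on the boundary --- is unproven. The face $y=z/2$ can in fact be ruled out cheaply (there $\partial_y\log h=\log\frac{\alpha\gamma^2}{(\alpha+\beta+\gamma)\beta^2}\to-\infty$ as $\gamma\to 0^+$ with $\beta>0$, so $h$ is decreasing in $y$ there), but you should say so rather than fold it into the same unproved assertion.
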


\begin{proof}
For \(a+bx>0,\)
\[\frac{\partial}{\partial{x}}((a+bx)^{\pm{(a+bx)}})=\pm{(a+bx)^{\pm{(a+bx)}}} \cdot (b\log{(a+bx)}+b),\]
from which the partial derivatives of \(h\) satisfy
\[\frac{1}{h} \cdot \frac{\partial{h}}{\partial{y}}=\log{\frac{(z-2y)^2(x-y)}{(2y)^2(x+z-y)}},\]
\[\frac{1}{h} \cdot\frac{\partial{h}}{\partial{z}}=\log{\frac{(x-y+z)(1-x-z)}{(z-2y)(2-2x-z)}},\]
\[\frac{1}{h} \cdot \frac{\partial{h}}{\partial{x}}=2\log{M}+\log{\frac{(x-y+z)(1-x)(1-x-z)}{(x-y)(2-2x-z)^2}},\]
as the formula above gives
\[\frac{1}{h} \cdot \frac{\partial{h}}{\partial{y}}=-\log{(x+z-y)}-1+\log{(x-y)}+1-2(\log{2y}+1)+2(\log{(z-2y)}+1)\]
and similarly for \(\frac{\partial{h}}{\partial{z}}, \frac{\partial{h}}{\partial{x}}.\)
Moreover,
\[\frac{(z-2y)^2(x-y)}{(2y)^2(x+z-y)}=1+\frac{((z-2y)^2-4y^2)(x-y)-4zy^2}{(2y)^2(x+z-y)}=1+\frac{z^2x-(z^2+4xz)y}{(2y)^2(x+z-y)},\]
\[\frac{(x-y+z)(1-x-z)}{(z-2y)(2-2x-z)}=1+\frac{(x+3y)(1-x)-z(y+1)}{(z-2y)(2-2x-z)}\]
reveal the zeros of \(\frac{\partial{h}}{\partial{y}},\frac{\partial{h}}{\partial{z}},\) which will be subsequently employed.
\par
Let \((x_0,y_0,z_0) \in \arg{\sup{h(x,y,z)}}:\) then each coordinate either makes some constraining inequality equality or has its partial derivative zero. The analysis below uses this observation to find \((x_0,y_0,z_0)\) and thus \(\sup{h}\) (each case proves \(\sup{h} \leq f^2(M),\) and the third also entails \(\sup{h} \geq f^2(M)\)). For simplicity, drop the subscripts, and denote such a point by \((x,y,z):\) \(y,z,x\) are considered in this order.
\par
\underline{Case \(1:\)} \(y=0.\) 
\par
If \(z=0,\) then \(h(x,y,z)=M^{2x}2^{2-2x}\) yields \(\max{(4,M^2)}\leq \max{(4,\frac{(M^2+1)^2}{M^2}\chi_{M>1})}.\) 
\par
If \(z=1-x,\) then \(h(x,y,z)=M^{2x}\frac{1}{x^x(1-x)^{1-x}},\) whose supremum, attained at \(x=\frac{M^2}{M^2+1}\) as the derivative is \(h \cdot (2\log{M}-\log{\frac{x}{1-x}}),\) is \(\frac{1}{1-x}=M^2+1 \leq \max{(4,\frac{(M^2+1)^2}{M^2}\chi_{M>1})}.\)
\par
Else, \(\frac{\partial{h}}{\partial{z}}=0,\) from which \(z=x(1-x).\) Then \(h(x,y,z)=M^{2x}\frac{(2-x)^{2-x}}{(1-x)^{1-x}},\) whose critical point is given by \(\log{M^2}-\log{\frac{2-x}{1-x}}=0\) or \(x=\frac{M^2-2}{M^2-1}\) for \(M \geq \sqrt{2},\) yielding \(h(x,y,z)=\frac{(2-x)^2}{1-x}=\frac{M^4}{M^2-1}.\) The supremum in this case is 
\[\max{(4,M^2,\frac{M^4}{M^2-1}\chi_{M>\sqrt{2}})} \leq \max{(4,\frac{(M^2+1)^2}{M^2}\chi_{M>1})}:\] 
this is immediate if \(M^2 \leq 2,\) and if \(M^2>2,\) then \(\frac{M^4}{M^2-1}<\frac{(M^2+1)^2}{M^2}\) since for \(q>2,\) \[(q+1)^2(q-1)-q^3=q^2-q-1>0.\]
\par
\underline{Case \(2:\)} \(y=x.\) Then \(2x \leq z \leq 1-x,\) and \[h(x,y,z)=M^{2x}\frac{z^z}{(2x)^{2x}(z-2x)^{z-2x}} \cdot \frac{(2-2x-z)^{2-2x-z}}{(1-x)^{1-x}(1-x-z)^{1-x-z}}.\]
\par
If \(z=2x,\) then 
\[h(x,y,z)=M^{2x} \frac{(2-4x)^{2-4x}}{(1-x)^{1-x}(1-3x)^{1-3x}}.\]
Its critical points satisfy
\[\log{M^2}+\log{\frac{(1-x)(1-3x)^3}{(2-4x)^4}}=0,\]
giving \(h(x,y,z)=\frac{(2-4x)^2}{(1-x)(1-3x)}=M\sqrt{\frac{1-3x}{1-x}} \leq M<\max{(4,\frac{(M^2+1)^2}{M^2}\chi_{M>1})},\) while \(x \in \{0,\frac{1}{3}\}\) yields \(\max{(4,M^{2/3})} \leq \max{(4,M)} \leq \max{(4,\frac{(M^2+1)^2}{M^2}\chi_{M>1})}\) as \(\frac{(q^2+1)^2}{q^2}>q^2 \geq q\) for \(q \geq 1.\)
\par
If \(z=1-x,\) then
\[h(x,y,z)=M^{2x}\frac{(1-x)^{1-x}}{(2x)^{2x}(1-3x)^{1-3x}}.\]
At the endpoints, this function is \(1,M^{2/3},\) and its critical points solve
\[\log{M^2}+\log{\frac{(1-3x)^3}{(2x)^2(1-x)}}=0,\]
yielding
\[h(x,y,z)=\frac{1-x}{1-3x}=M^{2/3}(\frac{1-x}{2x})^{2/3}=4+\frac{11x-3}{1-3x} \leq \max{(4,M^{2/3}(4/3)^{2/3})} \leq \max{(4,\frac{(M^2+1)^2}{M^2}\chi_{M>1})}\]
using that either \(x<\frac{3}{11}\) or \(x \geq \frac{3}{11},\) and \(M^{2/3}(4/3)^{2/3}<M^2\) for \(M \geq 6.\)
\par
Else, \(z=\frac{4x(1-x)}{x+1}, x \leq 1/3,\) and
\[h(x,y,z)=M^{2x}\frac{1}{(2y)^{2y}(z-2y)^{-2y}} \cdot \frac{(2-2x-z)^{2-2x}}{(1-x)^{1-x}(1-x-z)^{1-x}}=M^{2x}\frac{(2-2x)^{2-2x}}{(1+x)^{1+x}(1-3x)^{1-3x}}.\]
At the endpoints, \(h\) is \(4,M^{2/3},\) and its critical points satisfy
\[\log{M^2}+\log{\frac{(1-3x)^3}{(2-2x)^2(1+x)}}=0,\]
giving
\[h(x,y,z)=\frac{(2-2x)^2}{(1-3x)(x+1)}=M^{2} \cdot \frac{(1-3x)^2}{(1+x)^2} \leq M^2  \leq \max{(4,\frac{(M^2+1)^2}{M^2}\chi_{M>1})}.\]
\par
\underline{Case \(3:\)} \(y=\frac{zx}{z+4x}.\) 
\par
Since \(0 \leq y \leq \min{(\frac{z}{2},x)}\) for all \(z,x \geq 0\) (\((x,y,z) \in D\) implies \(y=0\) if \(z=x=0\)), the constraints on \(z\) are \(0 \leq z \leq 1-x.\) If \(z=0,\) then \(z=y=0,\) a case already considered. If \(z=1-x,\) then \(y=\frac{x-x^2}{1+3x},\) and
\[h(x,y,z)=M^{2x}\frac{(x-y+z)^{x+z}}{(x-y)^{x}(z-2y)^{z}}=M^{2x}\frac{(1+x)^2}{(4x^2)^x(1-x^2)^{1-x}}=M^{2x}\frac{(1+x)^{1+x}}{(2x)^{2x}(1-x)^{1-x}}.\]
The critical point is given by
\[\log{M^2}+\log{\frac{(1-x^2)}{4x^2}}=0,\]
or \(x^2=\frac{M^2}{M^2+4},\) and is a global maximum with value
\[\frac{1+x}{1-x}=\frac{(M+\sqrt{M^2+4})^2}{4} \leq \max{(4,\frac{(M^2+1)^2}{M^2}\chi_{M>1})}:\] 
if \(M^2 \leq 3/2,\) then \(\frac{(M+\sqrt{M^2+4})^2}{4} \leq 4;\) else, \(M^2 \geq 3/2,\) and \(\frac{(M+\sqrt{M^2+4})^2}{4}< \frac{(M^2+1)^2}{M^2}\) as \(M\sqrt{M^2+4}<M^2+2.\)
\par
Lastly, \(z=\frac{(x+3y)(1-x)}{y+1}\) and so \(y=\frac{z-x+x^2}{3-3x-z},\) whereby \(z^2(x+1)+z \cdot 4x^2+4x^3-4x^2=0.\) The roots of this quadratic equation are \(-2x, \frac{2x-2x^2}{x+1}.\) Hence \(z=\frac{2x-2x^2}{x+1}\) and \(y=\frac{x-x^2}{x+3}\) (\((x,y,z) \in D\) for \(0 \leq x \leq 1\)).
\par
If \(x \in \{0,1\},\) then \(y=z=0,\) a situation already analyzed. Else, \(x\) is a critical point,
\[2\log{M}+\log{\frac{(x-y+z)(1-x)(1-x-z)}{(x-y)(2-2x-z)^2}}=0\]
\[2\log{M}+\log{[(1+\frac{z}{x-y}) \cdot \frac{1-\frac{z}{1-x}}{(2-\frac{z}{1-x})^2}]}=0,\]
with \(\frac{z}{x-y}=\frac{(x+3)(1-x)}{(1+x)^2}, \frac{z}{1-x}=\frac{2x}{x+1},\) from which
\[2\log{M}+\log{\frac{4}{(x+1)^2} \cdot \frac{1-x^2}{4}}=0,\]
\[2\log{M}+\log{\frac{1-x}{1+x}}=0,\]
or \(x=\frac{M^2-1}{M^2+1}\) for \(M>1.\)
Since all the partial derivatives are zero,
\[h(x,y,z)=\frac{(2-2x-z)^2}{(1-x)(1-x-z)}=\frac{4}{1-x^2}=\frac{(M^2+1)^2}{M^2}.\]
\end{proof}

\section{Limiting Distributions of \(||A||\) and \(\lambda_1(A)\)}\label{section3}

This last section completes the proof of Theorem~\ref{theorem1} by concluding (\ref{desiredlim}) (subsections~\ref{uppbound} and \ref{reverseineq} treat (\ref{firstfinalgoal}) and (\ref{secondfinalgoal}), respectively) and its analog for \(||A||\) replaced by \(\lambda_1(A)\) (subsection~\ref{largesteigenvalue}).

\subsection{The Upper Bound}\label{uppbound}

This subsection justifies (\ref{firstfinalgoal}). Two tools are used towards this: the bound on the conditional expectation \(*\) given by (\ref{trace}), and the following linear algebra result, which provides a connection between \(||A_\kappa||,||A_s||\) and \(tr((A_s+A_{B,\kappa})^{2p})-tr(A_s^{2p}).\) For symmetric matrices, denote by \(\lambda_i(\cdot)\) the \(i^{th}\) largest eigenvalue for \(i \geq 1,\) and by convention, \(\lambda_i(Q)\) implicitly assumes \(i \geq 1, \lambda_i(Q)=0, i>n\) when \(Q \in \mathbb{R}^{n \times n}.\) 

\begin{lemma}\label{linalglemma}
Suppose \(S, Q \in \mathbb{R}^{n \times n}\) are symmetric matrices with \(\lambda_1(S) \geq 0, \lambda_{2m+1}(Q)=0\) for some integer \(m \in [1,\frac{n}{6}-1].\) Then for \(p \in \mathbb{N},\)
\[||S+Q||^{2p}-7m \cdot ||S||^{2p} \leq tr((S+Q)^{2p})-tr(S^{2p}) \leq 4m \cdot ||S+Q||^{2p}.\]
\end{lemma}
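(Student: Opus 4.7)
My plan is to reduce both inequalities to the classical singular-value interlacing for low-rank perturbations. I will read the hypothesis \(\lambda_{2m+1}(Q)=0\) as the genuine low-rank statement \(\mathrm{rank}(Q)\le 2m\), which is how \(Q\) enters the intended application: \(Q=A_{B,\kappa}\) has at most \(m\) off-diagonal non-zero pairs on \(E_1\cap E_2\), so the block structure yields exactly \(m\) positive and \(m\) negative non-zero eigenvalues. Under the rank bound, a Courant--Fischer min--max argument (restricting to \(\ker(Q)\), which has codimension at most \(2m\)) delivers the two-sided interlacing
\[\sigma_{i+2m}(S+Q)\le \sigma_i(S), \qquad \sigma_{i+2m}(S)\le \sigma_i(S+Q)\qquad (i\ge 1),\]
where \(\sigma_1(M)\ge\cdots\ge\sigma_n(M)\) are the singular values. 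Together with the identity \(tr(M^{2p})=\sum_{i=1}^n \sigma_i(M)^{2p}\) for symmetric \(M\) (valid since \(2p\) is even), this is the only structural input needed.

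For the upper bound, I would split
\[tr\bigl((S+Q)^{2p}\bigr)=\sum_{i=1}^{2m}\sigma_i(S+Q)^{2p}+\sum_{i=2m+1}^n \sigma_i(S+Q)^{2p},\]
bound the first sum by \(2m\|S+Q\|^{2p}\), and apply \(\sigma_i(S+Q)\le \sigma_{i-2m}(S)\) term-by-term on the second, reindexing to obtain \(\sum_{j=1}^{n-2m}\sigma_j(S)^{2p}\le tr(S^{2p})\). Subtracting \(tr(S^{2p})\) yields \(tr((S+Q)^{2p})-tr(S^{2p})\le 2m\|S+Q\|^{2p}\le 4m\|S+Q\|^{2p}\), which is the desired upper bound (in fact a bit tighter than claimed).

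For the lower bound, I would isolate the leading term \(\sigma_1(S+Q)^{2p}=\|S+Q\|^{2p}\) on the \((S+Q)\) side and split \(tr(S^{2p})\) analogously:
\[tr(S^{2p}) = \sum_{j=1}^{2m+1}\sigma_j(S)^{2p}+\sum_{j=2m+2}^{n}\sigma_j(S)^{2p}\le (2m+1)\|S\|^{2p}+\sum_{j=2m+2}^n \sigma_{j-2m}(S+Q)^{2p},\]
the tail using the second interlacing inequality. Reindexing \(k=j-2m\), the last sum is \(\sum_{k=2}^{n-2m}\sigma_k(S+Q)^{2p}\le tr((S+Q)^{2p})-\|S+Q\|^{2p}\). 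Rearranging gives
\[tr\bigl((S+Q)^{2p}\bigr)-tr(S^{2p})\ge \|S+Q\|^{2p}-(2m+1)\|S\|^{2p}\ge \|S+Q\|^{2p}-7m\|S\|^{2p},\]
the last step since \(2m+1\le 7m\) for \(m\ge 1\).

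The hard part will be justifying the rank reading of the hypothesis: taken literally, \(\lambda_{2m+1}(Q)=0\) only caps the positive eigenvalues of \(Q\), and a symmetric \(Q\) with many large negative eigenvalues would already break the upper bound above (a diagonal toy example suffices to see this). The application supplies the missing input (the non-zero spectrum of \(A_{B,\kappa}\) is symmetric across zero, forcing rank exactly \(2m\)), so the argument goes through as written under that interpretation. The auxiliary hypotheses play only a supporting role: \(\lambda_1(S)\ge 0\) aligns \(\|S\|\) with \(\sigma_1(S)=\lambda_1(S)\) in the trace splits, and \(m\le n/6-1\) ensures \(2m+2\le n-2m\) so that every reindexing above ranges over a non-empty index set.
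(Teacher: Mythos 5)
Your proof is correct and reaches both bounds (with the better constants \(2m\) and \(2m+1\) in place of \(4m\) and \(7m\)), but it is organized differently from the paper's. The paper stays with eigenvalues: it applies Weyl's inequalities to get \(\sigma_j\le\tilde{\sigma}_j\) and \(\tilde{\sigma}_j\le\sigma_{j-2m}\) (for \(j\ge 2m+1\)), and then runs a five-case analysis according to where the index \(t\) at which the spectrum of \(S\) changes sign falls relative to \(2m\) and \(n-4m\), matching positive eigenvalues via the first inequality and negative ones via the second. Your singular-value formulation collapses that case analysis entirely: since \(tr(M^{2p})=\sum_i\sigma_i(M)^{2p}\) with \(2p\) even, the two-sided interlacing \(\sigma_{i+2m}(S+Q)\le\sigma_i(S)\le\sigma_{i-2m}(S+Q)\) lets you compare the two traces term by term regardless of signs, and the hypothesis \(\lambda_1(S)\ge 0\) becomes superfluous (note \(\|S\|=\sigma_1(S)\) holds for any symmetric \(S\), so even the "alignment" role you assign to it is not needed). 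This is a cleaner and slightly stronger argument.

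Your diagnosis of the hypothesis is also on point, and in fact cuts deeper than you suggest: not only is the lemma false under the literal reading \(\lambda_{2m+1}(Q)=0\) (your diagonal example with many large negative eigenvalues of \(Q\) kills the upper bound), but the paper's own proof silently strengthens the hypothesis in a \emph{different} direction --- it writes \(\lambda_j(S)+\lambda_n(Q)=\sigma_j\le\tilde{\sigma}_j\) "for all \(j\)", which requires \(\lambda_n(Q)\ge 0\), i.e.\ \(Q\succeq 0\); that is false in the intended application, where \(Q=A_{B,\kappa}\) is a direct sum of antidiagonal \(2\times 2\) blocks with spectrum symmetric about \(0\). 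The reading that makes both the statement true and the application work is the one you adopt, \(\mathrm{rank}(Q)\le 2m\) (equivalently \(\lambda_{2m+1}(Q)=\lambda_{n-2m}(Q)=0\)), under which your interlacing inequalities are exactly the correct replacement for the one-sided bound \(\sigma_j\le\tilde{\sigma}_j\) used in the paper. So your proposal is not merely an alternative proof; it is the repair the lemma needs.
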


\begin{proof}
Since \(S, Q\) are symmetric, for \(\tilde{\sigma}_i=\lambda_i(S+Q), \sigma_i=\lambda_i(S), 1 \leq i \leq n,\)
\[tr((S+Q)^{2p})-tr(S^{2p})=\sum_{1 \leq i \leq n}{\tilde{\sigma}^{2p}_i}-\sum_{1 \leq i \leq n}{\sigma^{2p}_i}.\]
Weyl's inequalities,
\[\lambda_{k}(S)+\lambda_{n+j-k}(Q) \leq \lambda_{j}(S+Q) \leq \lambda_{l}(S)+\lambda_{j+1-l}(Q),\]
give when \(j \geq 2m+1,\)
\[\lambda_{j}(S)+\lambda_{n}(Q)=\sigma_j \leq \tilde{\sigma}_j \leq \sigma_{j-2m}=\lambda_{j-2m}(S)+\lambda_{2m+1}(Q),\]
the first inequality holding for all \(1 \leq j \leq n.\) Let \(t \geq 1\) be minimal with \(\sigma_t \geq 0 \geq \sigma_{t+1}.\) 
\par
The ensuing case-by-case analysis yields the statement of the lemma (\(1, 2, 3\) cover the lower bound, and \(4,5\) the upper bound).
\begin{enumerate}
    \item \(t \leq 2m:\) 
    \[tr((S+Q)^{2p})-tr(S^{2p})=\sum_{i \leq n}{\tilde{\sigma}_i^{2p}}-\sum_{i \leq n}{\sigma_i^{2p}} \geq\]
    \[\geq \tilde{\sigma}_1^{2p}+\sum_{1<i \leq t}{\sigma_i^{2p}} +\sum_{t+1 \leq i \leq t+2m}{\tilde{\sigma}_{i}^{2p}}+\sum_{t+2m< i \leq n-2m}{\sigma_{i-2m}^{2p}}+\sum_{n-2m< i \leq n}{\tilde{\sigma}_{i}^{2p}}-\sum_{i \leq n}{\sigma_i^{2p}} \geq\]
    \[\geq \tilde{\sigma}_1^{2p}+\sum_{n-2m< i \leq n}{\tilde{\sigma}_{i}^{2p}}-\sigma_1^{2p}-
    \sum_{n-4m< i \leq n}{\sigma_i^{2p}} \geq \max_{1 \leq i \leq n}{\tilde{\sigma}_i^{2p}}-(4m+1)\max_{1 \leq i \leq n}{\sigma_i^{2p}};\]

    \item \(2m<t<n-4m:\)  
    \[tr((S+Q)^{2p})-tr(S^{2p})=\sum_{i \leq n}{\tilde{\sigma}_i^{2p}}-\sum_{i \leq n}{\sigma_i^{2p}} \geq\]
    \[\geq \sum_{i \leq 2m}{\tilde{\sigma}_i^{2p}}+\sum_{2m<i \leq t }{\sigma_i^{2p}}+\sum_{t< i \leq t+2m}{\tilde{\sigma}_{i}^{2p}}+\sum_{t+2m< i \leq n-2m}{\sigma_{i-2m}^{2p}}+\sum_{n-2m< i \leq n}{\tilde{\sigma}_{i}^{2p}}-\sum_{i \leq n}{\sigma_i^{2p}} \geq\]
    \[\geq \sum_{i \leq 2m}{\tilde{\sigma}_i^{2p}}+\sum_{n-2m< i \leq n}{\tilde{\sigma}_{i}^{2p}}-\sum_{i \leq 2m}{\sigma_i^{2p}}-\sum_{n-4m< i \leq n}{\sigma_i^{2p}} \geq \max_{1 \leq i \leq n}{\tilde{\sigma}_i^{2p}}-6m\max_{1 \leq i \leq n}{\sigma_i^{2p}};\]
    
    \item \(n-4m \leq t:\) 
    \[tr((S+Q)^{2p})-tr(S^{2p})=\sum_{i \leq n}{\tilde{\sigma}_i^{2p}}-\sum_{i \leq n}{\sigma_i^{2p}} \geq\]
    \[\geq \sum_{i \leq 2m}{\tilde{\sigma}_i^{2p}}+\sum_{2m<i<t }{\sigma_i^{2p}}+\sum_{t \leq i \leq n}{\tilde{\sigma}_{i}^{2p}}-\sum_{i \leq n}{\sigma_i^{2p}} \geq\]
    \[\geq \sum_{i \leq 2m}{\tilde{\sigma}_i^{2p}}+\tilde{\sigma}_{n}^{2p}-\sum_{i \leq 2m}{\sigma_i^{2p}}-\sum_{t \leq i \leq n}{\sigma_i^{2p}} \geq \max_{1 \leq i \leq n}{\tilde{\sigma}_i^{2p}}-(6m+1)\max_{1 \leq i \leq n}{\sigma_i^{2p}};\]

    \item \(t \leq 2m:\)
     \[tr((S+Q)^{2p})-tr(S^{2p})=\sum_{i \leq n}{\tilde{\sigma}_i^{2p}}-\sum_{i \leq n}{\sigma_i^{2p}} \leq \sum_{i \leq 2m+t}{\tilde{\sigma}_i^{2p}}+\sum_{2m+t<i \leq n}{\sigma_{i}^{2p}}-\sum_{i \leq n}{\sigma_i^{2p}} \leq 4m\max_{1 \leq i \leq n}{\tilde{\sigma}_i^{2p}};\]
    
    \item \(t>2m:\)
    \[tr((S+Q)^{2p})-tr(S^{2p})=\sum_{i \leq n}{\tilde{\sigma}_i^{2p}}-\sum_{i \leq n}{\sigma_i^{2p}} \leq \sum_{i \leq 2m}{\tilde{\sigma}_i^{2p}}+\sum_{2m<i \leq t}{\sigma_{i-2m}^{2p}}+\sum_{t<i \leq \min(t+2m,n)}{\tilde{\sigma}_{i}^{2p}}+\]
    \[+\sum_{\min(t+2m,n)<i \leq n}{\sigma_{i}^{2p}}-\sum_{i \leq n}{\sigma_i^{2p}} \leq 4m\max_{1 \leq i \leq n}{\tilde{\sigma}_i^{2p}}.\]
\end{enumerate}

\end{proof}

(\ref{firstfinalgoal}) can now be concluded. Conditional on \(E(S,\kappa,M),\) Lemma~\ref{linalglemma} implies for \(n\) sufficiently large,
\[tr((A_s+A_{B,\kappa})^{2p})-tr(A_s^{2p}) \geq ||A_\kappa||^{2p}-7m \cdot ||A_s||^{2p},\]
and so
\[(tr((A_s+A_{B,\kappa})^{2p})-tr(A_s^{2p}))_{-} \leq 7m \cdot ||A_s||^{2p}.\]
Since \(\mathbb{P}(X>a) \leq \mathbb{E}[X^p+X_{-}^p]a^{-p}\) for any real-valued random variable \(X\) and \(a>0,\)
\[\mathbb{P}_*(||A_\kappa||>f(M)+\epsilon) \leq (f(M)+\epsilon)^{-2p}(\mathbb{E}_{*}[tr((A_s+A_{B,\kappa})^{2p})-tr(A_s^{2p})]+7m \cdot \mathbb{E}_{*}[||A_s||^{2p}]).\]
\par
Arguing as in subsection (\ref{twistedmethod}), 
\[\mathbb{E}_{*}[||A_s||^{2p}] \leq c(\kappa,c)\mathbb{E}[||A_s||^{2p}],\]
and Theorem~\ref{thbenaychpeche} for \(\mu=1, \gamma'=\frac{1}{2}, \gamma=\frac{1}{4}-\delta_1, \gamma''=\frac{\delta_1}{2}, s=\lfloor{} n^{\gamma''} \rfloor \geq  n^{\gamma''}/2\) yields
\[\mathbb{E}[||A_s||^{2p}] \leq (2+\epsilon)^{2p}+\mathbb{E}[||A_s||^{2s}] \cdot (2+\epsilon)^{2p-2s} \leq (2+\epsilon)^{2p}+(2+\epsilon)^{2p} \cdot n^3 (2+\epsilon)^{-n^{\gamma''}},\]
with the first term dominating the second for \(n\) large enough. Consequently,
\[\mathbb{P}_*(||A_\kappa||>f(M)+\epsilon) \leq (f(M)+\epsilon)^{-2p}(\mathbb{E}_{*}[tr((A_s+A_{B,\kappa})^{2p})-tr(A_s^{2p})]+14m(2+\epsilon)^{2p}),\]
whereby (\ref{trace}) for \(\kappa=\epsilon\) yields
\[\mathbb{P}_*(||A_\kappa||>f(M)+\epsilon) \leq 2mc(\epsilon,c)(1+c_1(M,\epsilon))^{-2p}+n^{-\delta/2}c(p)+14m(1+c_3(M,\epsilon))^{-2p}\]
for some \(c_i(M,\epsilon)>0,\) and \(n,p\) sufficiently large. Given the growth hierarchy \(m, p, n\) form, this last inequality entails (\ref{firstfinalgoal}).

\subsection{The Lower Bound}\label{reverseineq}

This subsection proves (\ref{secondfinalgoal}) by justifying for \(n \geq n(\delta, \kappa, M),\)
\begin{equation}\label{condexp}
    \mathbb{E}_{**}[tr(A^{2p}_\kappa)-tr(A^{2p}_s)] \geq \frac{c_0(\kappa,c)}{p}(1-2mn^{-1})^{p}(1-n^{-2\delta})^ps(\max{A},p),
\end{equation}
\begin{equation}\label{condvar}
    Var_{**}(tr(A^{2p}_\kappa)-tr(A^{2p}_s)) \leq n^{-1/2}[(\max{(M,2)})^{4p-2}(2m)^{4p}(4p)^{64p^2}+s(2p-1,M)].
\end{equation}
\par
Begin with (\ref{condexp}). The key observation is that anew solely even cycles contribute (in particular, the considered expectation is a sum of non-negative terms):

\begin{lemma}\label{lemma6}
Suppose a cycle \(\mathbf{i}\) contains some edge belonging to \(A_{s}\) and is not even (its length might be odd). Then there exists an undirected edge belonging to \(A_s\) appearing an odd number of times in \(\mathbf{i}.\)
\end{lemma}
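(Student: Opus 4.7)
The plan is to argue the contrapositive: assume every undirected edge of $\mathbf{i}$ that belongs to $A_s$ appears an even number of times, and show that $\mathbf{i}$ must then be even, which contradicts the hypothesis (and forces in particular the case in which $\mathbf{i}$ contains some $A_s$-edge). The proof reduces to a two-step parity argument combining the closed-walk structure of $\mathbf{i}$ with the matching structure of $A_{B,\kappa}$ imposed by property $(c)$ of $\mathcal{S}$.

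First I would look vertex by vertex. Since $\mathbf{i}=(i_0,i_1,\dots,i_\ell,i_0)$ is a closed walk, for any vertex $v$ the total number of edge-traversals incident to $v$—where a traversal of a non-loop edge $uv$ contributes $1$ and a traversal of a loop $vv$ contributes $2$—equals $2\,|\{t:i_t=v\}|$, hence is even. Split this total at $v$ into the contribution coming from edges belonging to $A_s$ and the one from edges belonging to $A_{B,\kappa}$. Under the standing assumption, every non-loop undirected $A_s$-edge at $v$ contributes its (even) multiplicity, and every $A_s$-loop at $v$ contributes twice its multiplicity, which is automatically even; so the $A_s$-part of the count at $v$ is even, and therefore so is the $A_{B,\kappa}$-part.

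Next I would invoke the sparsity of $A_{B,\kappa}$. By property $(b)$ of $\mathcal{S}$, every $(i,j)\in S$ satisfies $i<j$, so $A_{B,\kappa}$ has no diagonal entries; by property $(c)$, distinct elements of $S$ have disjoint vertex sets, so the undirected edges belonging to $A_{B,\kappa}$ form a matching. Thus at most one undirected $A_{B,\kappa}$-edge is incident to $v$, and the $A_{B,\kappa}$-part of the edge-count at $v$ is simply the multiplicity of that edge in $\mathbf{i}$ (or $0$ if no such edge exists). Combined with the previous step, that multiplicity is even at every $v$, so every undirected $A_{B,\kappa}$-edge of $\mathbf{i}$ has even multiplicity. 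Together with the assumption on $A_s$-edges, every undirected edge of $\mathbf{i}$ has even multiplicity, i.e.\ $\mathbf{i}$ is even, a contradiction.

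I do not expect any real obstacle: the argument is a short parity-plus-matching computation. The only points requiring care are (i) handling loops correctly when translating edge multiplicities into vertex degrees, which is automatic because each loop traversal contributes $2$, and (ii) using both $(b)$ and $(c)$ simultaneously so that the $A_{B,\kappa}$-degree at $v$ is exactly the multiplicity of a single undirected edge, with no diagonal correction.
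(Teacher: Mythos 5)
Your proof is correct, and it takes a genuinely different route from the paper. The paper first compresses each maximal run of $A_{B,\kappa}$-edges to a point or a single edge (this preserves the $A_s$-edges and the parity of the cycle), and then, given an undirected $A_{B,\kappa}$-edge $vw$ of odd multiplicity, runs a four-way case analysis on whether the first and last occurrences of that edge sit at the boundary of the cycle, in each case exhibiting an endpoint incident to an odd number of $A_s$-traversals. You instead argue the contrapositive globally: the closed-walk identity makes the traversal-degree at every vertex even, the assumed evenness of all $A_s$-multiplicities (with loops contributing $2\times$ their multiplicity, hence harmlessly) forces the $A_{B,\kappa}$-degree at every vertex to be even, and properties $(b)$ and $(c)$ of $\mathcal{S}$ reduce that degree to the multiplicity of the unique $A_{B,\kappa}$-edge at $v$, so every edge of $\mathbf{i}$ has even multiplicity and $\mathbf{i}$ is even. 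Your version avoids both the compression step and the case analysis, works uniformly for odd-length cycles, and in fact proves the slightly stronger statement that non-evenness alone (without assuming the presence of an $A_s$-edge) already yields an odd-multiplicity $A_s$-edge, since a cycle built solely from the matching $S$ is automatically even. The only thing the paper's compression buys is that the same device is reused in Lemma~\ref{goodcycles} and in the type-$(III)$ counting, so it is already set up there; as a standalone proof of Lemma~\ref{lemma6}, yours is cleaner.
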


\begin{proof}
Compress the clusters of edges belonging to \(A_{B,\kappa}\) to points or single edges (as in Lemma~\ref{goodcycles}), and note this procedure leaves the edges of \(\mathbf{i}\) belonging to \(A_s\) intact and does not change the parity of the cycle: i.e., this new cycle \(\mathbf{i}_c \ne \emptyset\) is not even and shares with \(\mathbf{i}\) its edges belonging to \(A_s\) (including which ones are marked). If \(\mathbf{i}_c\) has no edge belonging to \(A_{B,\kappa},\) then the conclusion follows. Else, there is an undirected edge \(vw\) belonging to \(A_{B,\kappa}\) appearing an odd number of times in \(\mathbf{i}_c:\) let the indices of these edges be \(1 \leq p_1<p_2<...<p_{2k+1} \leq p',\) where \(p'\) is the length of \(\mathbf{i}_c.\) A case-by-case analysis and \(u \ne v\) show either \(u\) or \(v\) is adjacent to an odd number of edges belonging to \(A_{s}\) (let \(\mathcal{P}_{odd}\) be the set of vertices of \(\mathbf{i}_c\) having this property), whereby the claim of the lemma ensues:
\begin{enumerate} 
    \item \(p_1>1, p_{2k+1}<p':\) \(v, w \in \mathcal{P}_{odd};\) 
    
    \item \(p_1=1, p_{2k+1}<p':\) \(i_1 \in \mathcal{P}_{odd};\)
    
    \item \(p_1>1, p_{2k+1}=p':\) \(i_{p'-1} \in \mathcal{P}_{odd};\)
    
    \item \(p_1=1, p_{2k+1}=p':\) \(i_1 \in \mathcal{P}_{odd}\) (as \(i_{p'-1}=i_1\)).
\end{enumerate}
\end{proof}

Since the left-hand side of (\ref{condexp}) is a sum of non-negative terms, the inequality follows from the description of cycles of type \((III)\) in subsection~\ref{twistedmethod} (see proof of (\ref{typeIII})) and \(S \ne \emptyset:\) all cycles of type \((III)\) with \(|a_{uv}|=\max_{1 \leq i \leq j \leq n}{|a_{ij}|}\) have expectation at least \(c_0(\kappa,c)(\max{A})^{2l}(n^{-1}\mathbb{E}[a^2_{11}\chi_{|a_{11}|\leq n^{\delta}}])^{p-l},\) where \(2l\) is the multiplicity of \(uv\) in the cycle, and there are at least \((n-2m)^{p-l}\) possibilities for choosing the remaining vertices of \(\mathbf{i}\) because restricting them to \(\{1,2, \hspace{0.05cm} ... \hspace{0.05cm}, n\} - \{t: \exists s, (\min{(s,t)},\max{(s,t)}) \in S\},\) a set of size at least \(n-2m,\) ensures no edge belonging to \(A_{B,\kappa}\) is created by any such assignment.
\par
Consider now (\ref{condvar}). Clearly,
\[Var_{**}[(tr(A^{2p}_\kappa)-tr(A^{2p}_s)]=\sum_{(\mathbf{i},\mathbf{j})}{(\mathbb{E}_{**}[a_{\mathbf{i}} \cdot a_{\mathbf{j}}]-\mathbb{E}_{**}[a_{\mathbf{i}}] \cdot \mathbb{E}_{**}[a_{\mathbf{j}}])},\]
where \(\mathbf{i},\mathbf{j}\) are cycles of length \(2p\) containing at least one edge belonging to \(A_{B,\kappa}.\)  Proceed in the same vein as Sinai and Soshnikov~[\ref{sinaisosh}] did when analyzing the variance of large moments of the trace of a Wigner matrix. By independence, the contribution of \((\mathbf{i},\mathbf{j})\) is non-zero iff \(\mathbf{i}\) and \(\mathbf{j}\) share at least one undirected edge belonging to \(A_{s},\) and every undirected edge in their union \(\mathbf{i} \cup \mathbf{j}\) appears an even number of times (if they share no edge, then they are independent; else, if there is an edge in the union appearing an odd number of times, then both terms are zero by Lemma~\ref{lemma6} and symmetry). 
\par
A crucial step in [\ref{sinaisosh}] is mapping such pairs \((\mathbf{i},\mathbf{j})\) to even cycles \(\mathcal{P}\) of length \(2 \cdot 2p-2=4p-2.\) Let \(i_{t-1}i_{t}=j_{s-1}j_{s}\) with \(t,s\) minimal in this order (i.e., \(t=\min{\{1 \leq k \leq 2p, \exists 1 \leq q \leq 2p, i_{k-1}i_{k}=j_{q-1}j_{q}\}}, s=\min{\{1 \leq q \leq 2p, j_{q-1}j_{q}=i_{t-1}i_{t}\}},\) where only edges belonging to \(A_s\) are considered). Then \(\mathcal{P}\) is 
obtained by gluing these two cycles along this common edge, which then gets erased. Put differently, \(\mathcal{P}\) traverses \(\mathbf{i}\) up to \(i_{t-1}i_{t},\) which is then used as a bridge to switch to \(\mathbf{j},\) traverse all of it, and get back to the rest of \(\mathbf{i}\) upon returning to \(j_{s-1}j_s=i_{t-1}i_{t}.\) More specifically, if \((i_{t-1},i_{t})=(j_{s-1},j_{s}),\) then
\[\mathcal{P}:=(i_0, \hspace{0.05cm} ... \hspace{0.05cm},i_{t-1},j_{s-2}, \hspace{0.05cm} ... \hspace{0.05cm},j_0,j_{2p-1}, \hspace{0.05cm} ... \hspace{0.05cm}, j_{s},i_{t+1}, \hspace{0.05cm} ... \hspace{0.05cm},i_{2p});\]
else, \((i_{t-1},i_{t})=(j_{s},j_{s-1}),\) and 
\[\mathcal{P}:=(i_0, \hspace{0.05cm} ... \hspace{0.05cm}, i_{t-1},j_{s+1}, \hspace{0.05cm} ... \hspace{0.05cm}, j_{2p-1},j_0, \hspace{0.05cm} ... \hspace{0.05cm}, j_{s-1},i_{t+1}, \hspace{0.05cm} ... \hspace{0.05cm}, i_{2p}).\]
Evidently, \(\mathcal{P}\) is an even cycle of length \(2 \cdot 2p-2=4p-2.\) 
\par
Since for the conditional expectation \(\mathbb{E}_{**}[\cdot]\) a similar split to the one in (\ref{sub}) occurs,
\[|\mathbb{E}_{**}[a_{\mathbf{i}} \cdot a_{\mathbf{j}}]-\mathbb{E}_{**}[a_{\mathbf{i}}] \cdot \mathbb{E}_{**}[a_{\mathbf{j}}]| \leq |\mathbb{E}_{**}[a_{\mathbf{i}} \cdot a_{\mathbf{j}}]|.\]
Hence
\[Var_{**}[(tr(A^{2p}_\kappa)-tr(A^{2p}_s)] \leq 4p \cdot n^{-2p} \sum_{\mathcal{P}, 0 \leq r \leq 2p-1}{\mathbb{E}_{**}[a_{\mathcal{P}}a_{q_rq_{r+2p-1}}^2]},\]
where \(\mathcal{P}=(q_0,q_1, \hspace{0.05cm} ... \hspace{0.05cm},q_{4p-3},q_0)\) is an even cycle with at least two edges belonging to \(A_{B,\kappa}\) inasmuch as for any such \(\mathcal{P}\) and \(0 \leq r \leq 2p-1\) there are at most \(4p\) pairs \((\mathbf{i},\mathbf{j})\) mapped to it (\(i_{t-1}=q_r, i_t=q_{t-1+2p-1},\) and it remains to choose whether \((i_{t-1},i_t)=(j_{s-1},j_s)\) or \((i_{t-1},i_t)=(j_{s},j_{s-1}),\) and the first vertex of \(\mathbf{j},\) which can be done in at most \(2 \cdot 2p=4p\) ways). Because \(a_{q_rq_{r+2p-1}}\) belongs to \(A_s,\) 
\[\mathbb{E}_{**}[a_{\mathcal{P}}a_{q_rq_{r+2p-1}}^2] \leq
n^{2(1/4-\delta_1)}\mathbb{E}_{**}[a_{\mathcal{P}}],\]
and reasoning as in subsection~\ref{twistedmethod},
\[\sum_{\mathcal{P}}{\mathbb{E}_{**}[a_{\mathcal{P}}]} \leq c(\kappa,c) \cdot n^{2p-1} [(4m-2) \cdot M^{4p-2}+4mn^{-\delta}e^{16}\sum_{1 \leq l \leq 2p-1}{\binom{4p-2}{2l} (2m)^{2l} \cdot 2^{4p-2-2l} ((2l+2)!)^{4l}  M^{2l}}+s(2p-1,M)]\]
yielding the conditional variance is at most
\[(4p \cdot n^{-2p}) \cdot n^{2(1/4-\delta_1)} \cdot c(\kappa,c)n^{2p-1} \cdot [(\max{(M,2)})^{4p-2}(2m)^{4p}(4p)^{64p^2}+s(2p-1,M)].\]
\par
Now (\ref{secondfinalgoal}) can be concluded. Lemma~\ref{linalglemma} gives, conditional on \(E(S,M,\kappa),\)
\[||A_\kappa||^{2p} \geq \frac{1}{4m}(tr(A^{2p}_\kappa)-tr(A^{2p}_s)).\]
Since \(2 \leq f(\max{A}) \leq f(M),\) (\ref{condexp}) yields for \(\epsilon \in (0,1)\)
\[\frac{1}{4m}\mathbb{E}_{**}[(tr(A^{2p}_\kappa)-tr(A^{2p}_s)] \geq \frac{c_0(\kappa,c)}{4m}(1-2n^{-2\delta})^{p}(f(\max{A})-\epsilon/2)^{2p} \geq 2(f(\max{A})-\epsilon)^{2p}\]
(subsection~\ref{closedformcombfunction} entails \(s(p,\cdot)^{1/2p} \to f(\cdot)\) uniformly on compact subsets of \((0,\infty);\) thus, for all \(p \geq p(M,\epsilon,\kappa),\) \(s(\max{A},p) \geq (f(\max{A})-\epsilon/2)^{2p}\)). Chebyshev's inequality gives
\[\mathbb{P}_{**}(||A_\kappa||<f(\max{A})-\epsilon)=\mathbb{P}_{**}(||A_\kappa||^{2p}<(f(\max{A})-\epsilon)^{2p}) \leq \frac{Var_{**}[(tr(A^{2p}_\kappa)-tr(A^{2p}_s)]}{16m^2(f(\max{A})-\epsilon)^{2p}}=o(1)\]
using (\ref{condvar}).

\subsection{The Largest Eigenvalue}\label{largesteigenvalue}

This subsection completes the proof of Theorem~\ref{theorem1} by arguing (\ref{desiredlim}) remains true when \(||A||\) is replaced by \(\lambda_1(A).\) The first inequality is immediate from (\ref{desiredlim}), while for the second, in the same spirit as before, it suffices to show for \(\kappa=\delta>0\) and \(0<\epsilon<\frac{f(1+\delta)-2}{8},\)
\[\lim_{n \to \infty}{\mathbb{P}_{**}(\lambda_1(A_\kappa) \leq f(\max{A})-\epsilon,\max{A} \geq 1+\delta)}=0\]
(if \(\max{A} \leq 1,\) then (\ref{liminf}) implies the desired result). Consider the following modified version of Lemma~\ref{linalglemma}:

\begin{lemma}\label{lemma8}
Suppose \(S, Q \in \mathbb{R}^{n \times n}\) are symmetric matrices with \(\lambda_{2m+1}(Q)=0\) for some integer \(m \in [1,\frac{n}{4}-1].\) Then for \(p \in \mathbb{N},\)
\[tr((S+Q)^{2p+1})-tr(S^{2p+1}) \leq 2m \cdot (\lambda_1(S+Q))^{2p+1}+(\lambda_n(S+Q))^{2p+1}+3m \cdot ||S||^{2p+1}.\]
\end{lemma}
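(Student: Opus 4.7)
Writing $\tilde{\sigma}_j = \lambda_j(S+Q)$ and $\sigma_j = \lambda_j(S),$ the quantity to bound is $\sum_{j=1}^{n}(\tilde{\sigma}_j^{2p+1} - \sigma_j^{2p+1}).$ The plan is to reuse the eigenvalue-pairing argument from Lemma~\ref{linalglemma}, exploiting a crucial parity observation: since $2p+1$ is odd, $x \mapsto x^{2p+1}$ is monotone on all of $\mathbb{R},$ so any Weyl bound $\tilde{\sigma}_a \leq \sigma_b$ automatically upgrades to $\tilde{\sigma}_a^{2p+1} \leq \sigma_b^{2p+1}.$ This removes the sign-based case split on the index $t$ that was needed for the even power in Lemma~\ref{linalglemma}.

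The hypothesis $\lambda_{2m+1}(Q) = 0$ combined with Weyl's inequality $\lambda_j(S+Q) \leq \lambda_{j-2m}(S) + \lambda_{2m+1}(Q)$ gives $\tilde{\sigma}_{k+2m}^{2p+1} \leq \sigma_k^{2p+1}$ for $k = 1, \dots, n-2m.$ I would use only $n-2m-1$ of these inequalities (deliberately dropping the one with $k = n-2m$), so that after cancellation
\[\sum_{j=1}^n \tilde{\sigma}_j^{2p+1} - \sum_{j=1}^n \sigma_j^{2p+1} \leq \sum_{j=1}^{2m} \tilde{\sigma}_j^{2p+1} + \tilde{\sigma}_n^{2p+1} - \sum_{j=n-2m}^{n} \sigma_j^{2p+1}.\]
Reserving that one pair is the only subtle step; it is what keeps the smallest eigenvalue $\tilde{\sigma}_n$ of $S+Q$ isolated on the right-hand side rather than being absorbed into a generic $||S||$-type term.

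The two residual sums are then dispatched by monotonicity once more. Since $\tilde{\sigma}_j \leq \tilde{\sigma}_1$ for every $j,$ one has $\sum_{j=1}^{2m} \tilde{\sigma}_j^{2p+1} \leq 2m\, \tilde{\sigma}_1^{2p+1}.$ Since $\sigma_j \geq -||S||$ for every $j,$ one has $\sigma_j^{2p+1} \geq -||S||^{2p+1},$ so that $-\sum_{j=n-2m}^{n} \sigma_j^{2p+1} \leq (2m+1)||S||^{2p+1} \leq 3m\,||S||^{2p+1},$ using $m \geq 1$ in the last step. Combining these three contributions yields exactly the claimed inequality.

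No real obstacle is anticipated: once the pairing is chosen, the whole argument is an odd-power version of manipulations already present in the proof of Lemma~\ref{linalglemma}. The single design choice worth flagging is how many of the $n-2m$ Weyl bounds to invoke. Exhausting all of them would produce the cleaner but qualitatively different estimate $2m\,\tilde{\sigma}_1^{2p+1} + 2m\,||S||^{2p+1},$ whereas sacrificing one pair produces the extra $\tilde{\sigma}_n^{2p+1}$ term, which is precisely the form needed in subsection~\ref{largesteigenvalue} to convert the trace lower bound into a lower bound on $\lambda_1(A_\kappa)$ (not merely on $||A_\kappa||$).
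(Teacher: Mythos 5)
Your proposal is correct and follows essentially the same route as the paper: apply Weyl's inequality with $\lambda_{2m+1}(Q)=0$ to all but the last index so that $\tilde{\sigma}_n^{2p+1}$ survives intact, bound the top $2m$ terms by $2m\,\tilde{\sigma}_1^{2p+1},$ and absorb the $2m+1$ leftover $\sigma_j^{2p+1}$ terms into $3m\,\|S\|^{2p+1}$ using odd-power monotonicity. The index bookkeeping and the deliberate reservation of the $i=n$ pair match the paper's argument exactly.
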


\begin{proof}
Keeping the notation from the proof of Lemma~\ref{linalglemma}, 
\[tr((S+Q)^{2p+1})-tr(S^{2p+1})=\sum_{i \leq n}{\tilde{\sigma}_i^{2p+1}}-\sum_{i \leq n}{\sigma_i^{2p+1}} \leq \sum_{i \leq 2m}{\tilde{\sigma}_1^{2p+1}}+\sum_{2m<i \leq n-1}{\sigma_{i-2m}^{2p+1}}+\tilde{\sigma}_n^{2p+1}-\sum_{i \leq n}{\sigma_i^{2p+1}} \leq\]
\[\leq 2m \cdot \tilde{\sigma}_1^{2p+1}+\tilde{\sigma}_n^{2p+1}+(2m+1) \cdot (\max_{1 \leq i \leq n}{|\sigma_i|})^{2p+1}.\]
\end{proof}

Conditional on \(**,\) Lemma~\ref{lemma8} gives
\[tr(A_\kappa^{2p+1})-tr(A_s^{2p+1}) \leq 2m \cdot (\lambda_1(A_\kappa))^{2p+1}+(\lambda_n(A_\kappa))^{2p+1}+3m \cdot ||A_s||^{2p+1};\]
if additionally \(\lambda_1(A_\kappa) \leq f(\max{A})-\epsilon<f(\max{A})-\epsilon/2 \leq ||A_{\kappa}||, ||A_s|| \leq 2+\epsilon, \max{A} \geq 1+\delta,\) then
\[tr(A_\kappa^{2p+1})-tr(A_s^{2p+1}) \leq 2m \cdot (f(\max{A})-\epsilon)^{2p+1}-(f(\max{A})-\epsilon/2)^{2p+1}+3m \cdot (2+\epsilon)^{2p+1} \leq -\frac{(f(\max{A})-\epsilon/2)^{2p+1}}{2}.\]
Therefore,
\[\mathbb{P}_{**}(\lambda_1(A_\kappa) \leq f(\max{A})-\epsilon, \max{A} \geq 1+\delta) \leq \mathbb{P}_{**}(||A_\kappa||< f(\max{A})-\epsilon/2)+\mathbb{P}_{**}(||A_s||>2+\epsilon)+\]
\[+\frac{4Var_{**}(tr(A_\kappa^{2p+1})-tr(A_s^{2p+1}))}{(f(\max{A})-\epsilon/2)^{4p+2}}=o(1)\]
since Lemma~\ref{lemma6} yields
\[\mathbb{E}_{**}[tr(A_\kappa^{2p+1})-tr(A_s^{2p+1})]=0\]
(a cycle of odd length is not even and contains some edge belonging to \(A_s\)), and reasoning as for (\ref{condvar}), 
\[Var_{**}(tr(A_\kappa^{2p+1})-tr(A_s^{2p+1})) \leq n^{-1/2}[(\max{(M,2)})^{4p-2}(2m)^{4p}(4p)^{64p^2}+s(2p,M)].\]
This completes the proof of Theorem~\ref{theorem1}.
\par
Regarding the largest \(k\) eigenvalues of \(A\) for \(k \in \mathbb{N}\) fixed, a similar rationale to \(k=1\) could be used, although the combinatorics would be more involved. Denote by \((\lambda_{(i)}(\frac{1}{\sqrt{n}} A))_{1 \leq i \leq n},(max_{(l)}(A))_{1 \leq l \leq \frac{n^2+n}{2}}\) the ordered statistics of \((|\lambda_i(\frac{1}{\sqrt{n}}A)|)_{1 \leq i \leq n}, (\frac{1}{\sqrt{n}}|a_{ij}|)_{1 \leq i \leq j \leq n},\) respectively with \(\lambda_{(1)}(\frac{1}{\sqrt{n}} A)=\frac{1}{\sqrt{n}}||A||,\) and \(max_{(1)}(A)=max(A).\) Use induction on \(k\) to show
\[\lambda_{k}(\frac{1}{\sqrt{n}}A)-f(max_{(k)}(A)) \xrightarrow[]{p} 0,\]
which in conjunction with symmetry would imply
\[\lambda_{n+1-k}(\frac{1}{\sqrt{n}}A)+f(max_{(k)}(A)) \xrightarrow[]{p} 0,\]
whereby
\[\lambda_{(k)}(\frac{1}{\sqrt{n}}A)-f(max_{(k)}(A)) \xrightarrow[]{p} 0.\]
The base case is Theorem~\ref{theorem1}; suppose the result holds for \(k \geq 1,\) and consider next \(k+1.\) Similarly to the case \(k=1,\) prove first   \[\lambda_{(k+1)}(\frac{1}{\sqrt{n}}A)-f(max_{(k+1)}(A)) \xrightarrow[]{p} 0,\]
and second justify this holds also for \(\lambda_{k+1}(\frac{1}{\sqrt{n}}A).\) Since the behavior of the largest \(k\) eigenvalues is known, consider
\begin{equation}\label{symsum}
    \sum_{i_1<i_2<...<i_{k+1}}{\lambda^{2p}_{i_1}(\frac{1}{\sqrt{n}}A) \cdot \lambda^{2p}_{i_2}(\frac{1}{\sqrt{n}}A) \cdot ... \cdot \lambda^{2p}_{i_{k+1}}(\frac{1}{\sqrt{n}}A)}
\end{equation}
for \(p \in \mathbb{N}.\) The dominant term is 
\[\lambda^{2p}_{(1)}(\frac{1}{\sqrt{n}}A) \cdot \lambda^{2p}_{(2)}(\frac{1}{\sqrt{n}}A) \cdot ... \cdot \lambda^{2p}_{(k+1)}(\frac{1}{\sqrt{n}}A),\]
for which the induction hypothesis gives it is roughly 
\[f^{2p}(max_{(1)}(A)) \cdot f^{2p}(max_{(2)}(A)) \cdot ... \cdot f^{2p}(max_{(k)}(A)) \cdot \lambda^{2p}_{(k+1)}(\frac{1}{\sqrt{n}}A);\]
(\ref{symsum}) could be expressed using
\[tr(\frac{1}{\sqrt{n}}A)^{2q}=\lambda^{2q}_1(\frac{1}{\sqrt{n}}A)+\lambda^{2q}_2(\frac{1}{\sqrt{n}}A)+...+\lambda^{2q}_n(\frac{1}{\sqrt{n}}A)\]
for \(q \in \{p,2p, \hspace{0.05cm} ... \hspace{0.05cm} ,(k+1)p\},\) by employing trace difference \(tr(A_{\kappa}^{2q})-tr(A_s^{2q})\) instead (to ensure they capture just the edge eigenvalues of \(A\)), a rationale as in section~\ref{section3} and the analysis from section~\ref{section2} would give (\ref{symsum}) is in (conditional) expectation
\[f^{2p}(max_{(1)}(A)) \cdot f^{2p}(max_{(2)}(A)) \cdot ... \cdot f^{2p}(max_{(k+1)}(A))(1+o(1))\] 
and its variance small, whereby the desired result for \(\lambda_{(k+1)}(\frac{1}{\sqrt{n}}A)\) ensues; finally,
\begin{equation}\label{symsum2}
    \sum_{\{i_1,i_2, \hspace{0.05cm} ... \hspace{0.05cm},i_{k+1}\}}{\lambda^{2p}_{i_1}(\frac{1}{\sqrt{n}}A) \cdot \lambda^{2p}_{i_2}(\frac{1}{\sqrt{n}}A) \cdot ... \cdot \lambda^{2p}_{i_{k}}(\frac{1}{\sqrt{n}}A) \cdot \lambda^{2p+1}_{i_{k+1}}(\frac{1}{\sqrt{n}}A)},
\end{equation}
where the \(k+1\) indices are pairwise distinct (i.e., they are the elements of a set), would yield \(\lambda_{k}(\frac{1}{\sqrt{n}}A)<\lambda_{(k)}(\frac{1}{\sqrt{n}}A)-\epsilon\) occurs with small probability (else, (\ref{symsum2}) would be negative because  \(\lambda_{n-k}(\frac{1}{\sqrt{n}}A)=-\lambda_{(k+1)}(\frac{1}{\sqrt{n}}A)\) given the induction hypothesis, which should occur with small probability since its expectation would be \(0\) and its variance small).

\vspace{0.3cm}
\textbf{Acknowledgements:} The author would like to thank professors George Papanicolaou and Lenya Ryzhik for their feedback on the expository aspects of this paper.

\addcontentsline{toc}{section}{References}
\section*{References}

\begin{enumerate}

    \item\label{auffinger} A. Auffinger, G. Ben-Arous, and S. Péché, \textit{Poisson convergence for the largest eigenvalues of heavy tailed random matrices}, Ann. Inst. H. Poincaré Probab. Statist., Vol. \(45,\) No. \(3, 589-610, 2009.\) 
    
    \item\label{baisilv} Z. D. Bai, and J. Silverstein, \textit{Spectral Analysis of Large Dimensional Random Matrices}, Springer Series in Mathematics, Second Edition, \(2010.\)
    
     \item\label{baietal} Z. D. Bai, J. Silverstein, and Y. Q. Yin, \textit{A note on the largest eigenvalue of a large dimensional sample covariance matrix},  J. Multivariate Anal., \(26, 166-168, 1988.\)
     
     \item\label{baiyin2} Z. D. Bai, and Y. Q. Yin. \textit{Limit of the Smallest Eigenvalue of a Large Dimensional Sample Covariance Matrix}, Ann. of Probab., Vol. \(21,\) No. \(3, 1275-1294, 1993.\)
    
    \item\label{baiyin} Z. D. Bai, and Y. Q. Yin, \textit{Necessary and Sufficient Conditions for Almost Sure Convergence of the Largest Eigenvalue of a Wigner Matrix}, Ann. of Probab., Vol. \(16,\) No. \(4, 1729-1741, 1988.\)
    
    \item\label{benaychpeche} F. Benaych-Georges, and S. Péché, \textit{Localization and Delocalization for Band Matrices},
    Ann. Inst. H. Poincaré Probab. Statist., Vol. \(50,\) No. \(4, 1385-1403, 2014.\) 

    \item\label{khorunzhiy} O. Khorunzhiy, \textit{High Moments of Large Wigner Random Matrices and Asymptotic Properties of the Spectral Norm}, Random Operators and Stochastic Equations, \(20, 25-68, 2012.\)
    
    \item\label{leadbetteretal} M. R. Leadbetter, G. Lindgren, and H. Rootzén, \textit{Extremes and Related Properties of Random Sequences and Processes}, Springer-Verlag, New York, \(1983.\)
    
    \item\label{leeyin} J. O. Lee, and J. Yin, \textit{A Necessary and Sufficient Condition for Edge Universality of Wigner Matrices}, \(2012,\) Duke Math. J., Vol. \(163,\) No. \(1, 117-173, 2014.\)
    
    \item\label{ruzmaikina} A. Ruzmaikina, \textit{Universality of the Edge Distribution of Eigenvalues of Wigner Random Matrices with Polynomially Decaying Distributions of Entries}, Commun. Math. Phys., Vol. \(261,\) Issue \(2, 277-296,\) \(2006.\)

    \item\label{sinaisosh} Ya. Sinai, and A. Soshnikov, \textit{Central Limit Theorem for Traces of Large Random Symmetric Matrices With Independent Matrix Elements}, Bol. Soc. Brasil. Mat., Vol. \(29,\) No. \(1, 1-24, 1998.\)
    
    \item\label{sinaisosh2} Ya. Sinai, and A. Soshnikov, \textit{A Refinement of Wigner's Semicircle Law in a Neighborhood of the Spectrum Edge for Random Symmetric Matrices}, Functional Analysis and Its Applications, Vol. \(32,\) No. \(2, 1998.\)
    
    \item\label{sohnikov2} A. Soshnikov, \textit{Universality at the edge of the spectrum in Wigner random matrices}, Comm. Math. Phys., Vol. \(207,\) No. \(3, 697-733, 1999.\)
    
    \item\label{soshnikov} A. Soshnikov, \textit{Poisson Statistics for the Largest Eigenvalue of Wigner Random Matrices with Heavy Tails}, Comm. in Probab., \(9, 82-91, 2004.\)
    
    \item\label{taovu} T. Tao, and V. Vu, \textit{Random matrices: Universality of local eigenvalue statistics}, Acta Math., Vol.  \(206,\) No. \(1, 127-204, 2011.\)
    
    \item\label{tracywidom} C. A. Tracy and H. Widom, \textit{Level spacing distributions and the Airy kernel},
    Comm. Math. Phys., Vol. \(159,\) No. \(1, 151-174, 1994.\)
    
    \item\label{wigner} E. P. Wigner, \textit{On the Distribution of the Roots of Certain Symmetric Matrices}, Annals of Mathematics, Second Series, Vol. \(67,\) No. \(2,\) \(325-327, 1958.\)

\end{enumerate}

\end{document}